	\newtheorem{theorem}{Theorem}
	\newtheorem{lemma}{Lemma}
	\newtheorem{remark}{Remark}
	\newtheorem{corollary}{Corollary}
\begin{document}
		%
		% paper title
		% Titles are generally capitalized except for words such as a, an, and, as,
		% at, but, by, for, in, nor, of, on, or, the, to and up, which are usually
		% not capitalized unless they are the first or last word of the title.
		% Linebreaks \\ can be used within to get better formatting as desired.
		% Do not put math or special symbols in the title.
		\title{ Consensus ALADIN: A Framework for Distributed Optimization and Its Application in Federated Learning}
		%
		%
		% author names and IEEE memberships
		% note positions of commas and nonbreaking spaces ( ~ ) LaTeX will not break
		% a structure at a ~ so this keeps an author's name from being broken across
		% two lines.
		% use \thanks{} to gain access to the first footnote area
		% a separate \thanks must be used for each paragraph as LaTeX2e's \thanks
		% was not built to handle multiple paragraphs
		%
		%
		%\IEEEcompsocitemizethanks is a special \thanks that produces the bulleted
		% lists the Computer Society journals use for "first footnote" author
		% affiliations. Use \IEEEcompsocthanksitem which works much like \item
		% for each affiliation group. When not in compsoc mode,
		% \IEEEcompsocitemizethanks becomes like \thanks and
		% \IEEEcompsocthanksitem becomes a line break with idention. This
		% facilitates dual compilation, although admittedly the differences in the
		% desired content of \author between the different types of papers makes a
		% one-size-fits-all approach a daunting prospect. For instance, compsoc 
		% journal papers have the author affiliations above the "Manuscript
		% received ..."  text while in non-compsoc journals this is reversed. Sigh.
		
		\author{Xu Du,~\IEEEmembership{Member,~IEEE,}
			Jingzhe Wang,~\IEEEmembership{Student Member,~IEEE}
			%Xiaojun Yuan, ~\IEEEmembership{Senior Member,~IEEE}
			%	John~Doe,~\IEEEmembership{Fellow,~OSA,}
			%	and~Jane~Doe,~\IEEEmembership{Life~Fellow,~IEEE}% <-this % stops a space
			\IEEEcompsocitemizethanks{\IEEEcompsocthanksitem X.Du is with the Institute of Mathematics HNAS, Henan Academy of Science,
				Zhengzhou, China. 
			%	X.Du is also with
			%	National Key Laboratory of Science and Technology on Communications, University of
				%	Electronic Science and Technology of China, Chengdu 610000, China.\\
				% note need leading \protect in front of \\ to get a newline within \thanks as
				% \\ is fragile and will error, could use \hfil\break instead.
				E-mail:  duxu@shanghaitech.edu.cn
			}
			\IEEEcompsocitemizethanks{\IEEEcompsocthanksitem 
				Jingzhe Wang is with the Department of Informatics \& Networked Systems, School of Computing and Information, University of Pittsburgh, Pittsburgh, PA, USA. E-mail:  jiw148@pitt.edu
			}

		}% <-this % stops a space
		%	\thanks{Manuscript received April 19, 2005; revised August 26, 2015.}}
	
	% note the % following the last \IEEEmembership and also \thanks - 
	% these prevent an unwanted space from occurring between the last author name
	% and the end of the author line. i.e., if you had this:
	% 
	% \author{....lastname \thanks{...} \thanks{...} }
	%                     ^------------^------------^----Do not want these spaces!
	%
	% a space would be appended to the last name and could cause every name on that
	% line to be shifted left slightly. This is one of those "LaTeX things". For
	% instance, "\textbf{A} \textbf{B}" will typeset as "A B" not "AB". To get
	% "AB" then you have to do: "\textbf{A}\textbf{B}"
	% \thanks is no different in this regard, so shield the last } of each \thanks
% that ends a line with a % and do not let a space in before the next \thanks.
% Spaces after \IEEEmembership other than the last one are OK (and needed) as
% you are supposed to have spaces between the names. For what it is worth,
% this is a minor point as most people would not even notice if the said evil
% space somehow managed to creep in.

% The paper headers
%\markboth{IEEE Transactions on Pattern Analysis and Machine Intelligence,~Vol., No., ~ }%
\markboth{}%
\markboth{}%
%{Shell \MakeLowercase{\textit{et al.}}: Bare Advanced Demo of IEEEtran.cls for IEEE Computer Society Journals}
% The only time the second header will appear is for the odd numbered pages
% after the title page when using the twoside option.
% 
% *** Note that you probably will NOT want to include the author's ***
% *** name in the headers of peer review papers.                   ***
% You can use \ifCLASSOPTIONpeerreview for conditional compilation here if
% you desire.

% The publisher's ID mark at the bottom of the page is less important with
% Computer Society journal papers as those publications place the marks
% outside of the main text columns and, therefore, unlike regular IEEE
% journals, the available text space is not reduced by their presence.
% If you want to put a publisher's ID mark on the page you can do it like
% this:
%\IEEEpubid{0000--0000/00\$00.00~\copyright~2015 IEEE}
% or like this to get the Computer Society new two part style.
\IEEEpubid{\makebox[\columnwidth]{\hfill}} %0000--0000/00/\$00.00~\copyright~2023 IEEE}%
%0000--0000/00/\$00.00~\copyright~2023 IEEE}%
%\hspace{\columnsep}\makebox[\columnwidth]{\hfill}}
%\hspace{\columnsep}\makebox[\columnwidth]{Published by the IEEE Computer Society\hfill}}
% Remember, if you use this you must call \IEEEpubidadjcol in the second
% column for its text to clear the IEEEpubid mark (Computer Society journal
% papers don't need this extra clearance.)

% use for special paper notices
%\IEEEspecialpapernotice{(Invited Paper)}

% for Computer Society papers, we must declare the abstract and index terms
% PRIOR to the title within the \IEEEtitleabstractindextext IEEEtran
% command as these need to go into the title area created by \maketitle.
% As a general rule, do not put math, special symbols or citations
% in the abstract or keywords.
\IEEEtitleabstractindextext{%
\begin{abstract}
This paper investigates algorithms for solving distributed consensus optimization problems that are non-convex. 
%As a state-of-art algorithm candidate that solves such problems, a variant of ADMM (Alternating Direction Method of Multipliers) \cite{Boyd2011}, called Consensus ADMM, has been well studied in literature. 
%However, in general, such an algorithm only works for convex problems. Therefore, how to meet distributed consensus optimization problems that are non-convex is still open. 
Since Typical ALADIN (Typical Augmented Lagrangian based Alternating Direction Inexact Newton Method, T-ALADIN for short)\cite{houska2016augmented} is a well-performed algorithm treating distributed optimization problems that are non-convex, directly adopting T-ALADIN to those of consensus is a natural approach. %However, when solving distributed consensus optimization problems, 
However, T-ALADIN typically results in high communication and  computation overhead, which makes such an approach far from efficient. 
In this paper, %to improve T-ALADIN in terms of both communication and computation efficiency,
we propose a new variant of the ALADIN family, coined consensus ALADIN (C-ALADIN for short). C-ALADIN inherits all the good properties of T-ALADIN, such as the local linear or super-linear convergence rate and the local convergence guarantees for non-convex optimization problems; besides, C-ALADIN offers unique improvements in terms of \textit{communication efficiency} and \textit{computational efficiency}. 
%	Specifically, to boost communication efficiency, C-ALADIN adopts \textit{BFGS}\cite{Nocedal2006} in such a novel way that the corresponding Hessian approximation can be performed at the master side, which avoids the transmission of Hessian matrices from agents. Atop the above advancement, C-ALADIN then improves computational efficiency by observing the sparse nature of quadratic programming (QP), which makes C-ALADIN escape from solving a large-scale QP. 
Moreover, C-ALADIN involves a reduced version, in comparison with Consensus ADMM (Alternating Direction Method of Multipliers) \cite{Boyd2011}, showing significant convergence performance, even without the help of second-order information. We also propose a practical version of C-ALADIN, named FedALADIN, that seamlessly serves the emerging federated learning applications, which expands the reach of our proposed C-ALADIN. We provide numerical experiments to demonstrate the effectiveness of C-ALADIN. The results show that C-ALADIN has significant improvements in convergence performance.
\end{abstract}

% Note that keywords are not normally used for peerreview papers.
\begin{IEEEkeywords}
Distributed Consensus Optimization, Algorithm Efficiency, Convergence Analysis, Federated Learning
\end{IEEEkeywords}}

% make the title area
\maketitle

% To allow for easy dual compilation without having to reenter the
% abstract/keywords data, the \IEEEtitleabstractindextext text will
% not be used in maketitle, but will appear (i.e., to be "transported")
% here as \IEEEdisplaynontitleabstractindextext when compsoc mode
% is not selected <OR> if conference mode is selected - because compsoc
% conference papers position the abstract like regular (non-compsoc)
% papers do!
\IEEEdisplaynontitleabstractindextext
% \IEEEdisplaynontitleabstractindextext has no effect when using
% compsoc under a non-conference mode.

% For peer review papers, you can put extra information on the cover
% page as needed:
% \ifCLASSOPTIONpeerreview
% \begin{center} \bfseries EDICS Category: 3-BBND \end{center}
% \fi
%
% For peerreview papers, this IEEEtran command inserts a page break and
% creates the second title. It will be ignored for other modes.
\IEEEpeerreviewmaketitle

\ifCLASSOPTIONcompsoc
\IEEEraisesectionheading{\section{Introduction}\label{sec:introduction}}
\else
\section{Introduction}
\label{sec:Introduction}
\fi

% Computer Society journal (but not conference!) papers do something unusual
% with the very first section heading (almost always called "Introduction").
% They place it ABOVE the main text! IEEEtran.cls does not automatically do
% this for you, but you can achieve this effect with the provided
% \IEEEraisesectionheading{} command. Note the need to keep any \label that
% is to refer to the section immediately after \section in the above as
% \IEEEraisesectionheading puts \section within a raised box.

% The very first letter is a 2 line initial drop letter followed
% by the rest of the first word in caps (small caps for compsoc).
% 
% form to use if the first word consists of a single letter:
% \IEEEPARstart{A}{demo} file is ....
% 
% form to use if you need the single drop letter followed by
% normal text (unknown if ever used by the IEEE):
% \IEEEPARstart{A}{}demo file is ....
% 
% Some journals put the first two words in caps:
% \IEEEPARstart{T}{his demo} file is ....
% 
% Here we have the typical use of a "T" for an initial drop letter
% and "HIS" in caps to complete the first word.
\IEEEPARstart{I}n recent years, distributed optimization algorithms have received a lot of attention due to developments in numerical optimal control \cite{diehl2011numerical}, smart grid \cite{zhu2015optimization}, wireless communication\cite{bjornson2017massive}, game theory\cite{han2019game}, and machine learning\cite{mcmahan2017communication}. 
In the field of distributed optimization algorithm design, solving distributed non-convex problems efficiently has always been the direction of people's efforts.
%However, there are still lots of open problems in this field. Such as solving distributed consensus non-convex problems with theoretical guarantees or high convergence rate algorithm design. 
%Although a large number of distributed optimization algorithms have been studied for convex problems, 
% Because many practical problems such as federated learning are usually non-convex.
To deal with non-convexity, in this paper, we follow this direction and propose a novel algorithmic framework for distributed non-convex consensus optimization.% and federated 
\subsection{The Road to Consensus ALADIN}\label{sec: The Road to Consensus ALADIN}

%\subsection{Prior arts}
%Before introducing our proposed method, in Section~\ref{sec: Distributed Consensus Optimization}, we first summarize some existing works on distributed consensus optimization.%the difference between them and federated learning, 
%Then, in Section~\ref{sec:Why T-ALADIN}, we will have an overview of the highly well performed distributed non-convex optimization algorithm named T-ALADIN. Finally, as a potential application of C-ALADIN, we will review the current research state of federated learning in Section~\ref{sec: Federated Learning}.

% \subsubsection{Distributed Consensus Optimization}\label{sec: Distributed Consensus Optimization}
We start with introducing distributed optimization~(DO for short) problems.
DO problems are generally formulated in the fashion of mathematical programming, where
separable objectives are linearly coupled by $m$ equality constraints. Formally, it can be described as follows:
\begin{equation}\label{eq: DOPT_G}
\begin{split}
\min_{\xi_i\in \mathbb R^{n_i}}\;\;&  \mathop{\sum}_{i=1}^{N}  f_i(\xi_i)\\
\mathrm{s.t.}\;\;& \mathop{\sum}_{i=1}^{N} A_i\xi_i=b\;|\mu. 
\end{split}
\end{equation}
Here, the coupling matrices $A_i\in \mathbb R^{n_i\times m}$ and the coupling parameter $b\in \mathbb R^{m}$ are given. The dimension $n_i$ of private variables $\xi_i$s are potentially different. $\mu\in \mathbb R^m$ indicates the corresponding dual variable of the coupling constraints. When the objective $f_i$s are convex, there are some classical algorithms that can be used to solve it, such as dual decomposition (DD)
\cite{dantzig1960decomposition,everett1963generalized}, and ADMM \cite{Boyd2011}.
As a special case of DO, distributed consensus (DC) optimization problems meet all the challenges such as convergence theory for non-convex cases. 
The main difference of DO and DC is that DC has a global variable to which all the private variables will converge (detailed descriptions will be shown in Section \ref{sec: FL via Consensus ADMM}). 

As milestone research of DC, \cite{shi2014linear} shows that Consensus ADMM \cite{Boyd2011}, under some assumptions, has a linear convergence rate for DC problems that are strongly convex. We refer \cite{yang2022survey} as a survey paper for more details. 
Notice that, similar to Consensus ADMM, current algorithms such as DGD \cite{yuan2016convergence}, EXTRA \cite{shi2015extra} only have convergence guarantees for convex problems in the area of DC. However, 
many practical problems \cite{diehl2011numerical}, especially those met in federated learning~(FL)\cite{ioffe2015batch}, are non-convex. When meeting such problems, the above algorithms cannot provide satisfactory solutions in general.

%{\color{red}Why T-ALADIN}

To solve such non-convex problems, in literature, T-ALADIN is a state-of-the-art algorithm that can provide theoretical local convergence guarantee. Technically, T-ALADIN can be regarded as a successful combination of ADMM and sequential quadratic programming (SQP). Details can be found in Subsection \ref{sec: ALADIN chapter}.
To date, T-ALADIN has several elegant successors \cite{Engelmann2019,Du2019,houska2017convex,Shi2022} and already shown effectiveness in many applications \cite{Shi2018,Engelmann2019,Du2019,Jiangwireless}. From the above facts, to solve distributed consensus problems that are non-convex, directly adopting T-ALADIN seems very natural. However, such a trivial approach meets the following challenges: \textbf{\textit{first}}, T-ALADIN will bring a large number of constraints in the coupled QP step (details can be found in \eqref{eq: ALADIN-coupled QP}), and the  dimension of the corresponding dual variable $\mu$ will be extremely large; \textbf{\textit{second}}, the T-ALADIN structure inevitably depends on the uploading of the first and second order information (details can be found in\eqref{eq: ALADIN upload}) from the agents and downloading of the updated primal and dual variables, which incurs huge communication complexity; \textbf{\textit{third}}, in T-ALADIN, a large-scale coupled QP has to be solved exactly, which results in heavy computation workload.

In this paper, we propose C-ALADIN for meeting the aforementioned challenges. C-ALADIN addresses the challenges as follows: \textbf{\textit{First}}, instead of solving a coupled QP in T-ALADIN,  a consensus QP is solved in C-ALADIN; \textbf{\textit{second}}, we improve the upload and download communication efficiency by designing decoding strategies on both sides of the agents and the master.      
In detail, on the uploading side, we find that the local optimizer dominates such parameters. Such an observation, in conjunction with the approximation techniques of Broyden–Fletcher–Goldfarb–Shanno (BFGS) \cite{Nocedal2006}, enable the master to recover the Hessian approximation matrices. It avoids uploading Hessian matrices directly from agents. We name the above techniques \textit{Consensus BFGS ALADIN}.
% helps us avoid  In such a way, the Hessian approximation matrices can be  
% recovered by the master.
Later, in a reduced version named \textit{Reduced Consensus ALADIN}, we simply use an identity matrix for large-scale computation problems.
On the downloading side, inspired by the KKT (Karush–Kuhn–Tucker) optimality condition of the coupled QP, we allow the agents to recover the dual variables that are not urgently broadcast. It can be realized by decoding such variables with the help of the global variable; \textbf{\textit{third}}, in C-ALADIN, the computational bottlenecks come from solving a large-scale sparse consensus QP that plays a key role in coordinating information.
% has to be solved for information coordination in the C-ALADIN framework. 
Inspired by the technique of KKT mentioned above, we find an equivalent form of the large-scale sparse consensus QP in C-ALADIN. Such a form can significantly release the burden on computing the corresponding KKT matrix. Based on our proposed C-ALADIN, we then propose a theory of convergence analysis, which works for both convex and non-convex cases. 

In order to expand the application scope of our proposed C-ALADIN algorithm family, we next show how FL can benefit from our proposed C-ALADIN.

\subsection{Federated Learning via Consensus ALADIN}\label{sec: Federated Learning}

FL, as a framework that aims to train a relatively universal model with data from different devices without transmitting the original data directly, involves many DC problems that are in general either convex or non-convex. In fact, there are several existing efforts \cite{ioffe2015batch,mcmahan2017communication,li2020federated,li2019feddane,zhou2021communication,zhou2022federated}  on solving DC problems in FL. However, such algorithms typically suffer the bottleneck of lacking theoretical non-convex analysis and unsatisfactory convergence rate. 
%typically lack of generalized theoretical non-convex analysis. ht.
%suffer the bottleneck of theoretical non-convex analysis and convergence rate. 
To achieve both, we start adopting C-ALADIN in FL. Though C-ALADIN shows promising convergence performance, directly integrating C-ALADIN with FL is non-trivial. Specifically, all current members in C-ALADIN
rely on uploading the local primal variables to the master, which is not secure. Then, because of the high-dimension property of FL, second order information can not be used, which limits the power of \textit{Consensus BFGS ALADIN}. By observing such challenges, based on \textit{Reduced Consensus ALADIN}, we design a novel variant member of C-ALADIN, named \texttt{FedALADIN}, that seamlessly meets the requirements of FL.

In summary, our key contributions of this paper are as follows:

a) We introduce the notion of consensus QP.

b) We propose a novel efficient algorithm family C-ALADIN that shows rigorous convergence guarantee, which consists of \textit{Consensus BFGS ALADIN} and \textit{Reduced Consensus ALADIN}. 

c) We propose a novel proof framework to perform the convergence analysis of C-ALADIN.

d) We propose \texttt{FedALADIN} for FL.

e) We perform numerical experiments on both C-ALADIN and \texttt{FedALADIN}. The results say that both C-ALADIN and \texttt{FedALADIN} show significant improvements in convergence performance.

\subsection{Organization}
The rest of this paper is organized as follows. 
In Section \ref{sec: section2}, we provide mathematical preliminaries, including fundamentals of Consensus ADMM and T-ALADIN. 
In Section~\ref{sec: C-ALADIN},
we present a new algorithm named C-ALADIN. Moreover, we show that a reduced version of C-ALADIN can be applied to FL named \texttt{FedALADIN} in Section \ref{sec: FedALADIN}.  Later, convergence theory of C-ALADIN is established in Section~\ref{sec: convergence}. In the end, we show the numerical result in Section~\ref{sec: Numerical} .
In Section \ref{related-work}, we provide a literature review. Section \ref{sec: conclusion} concludes this paper.

\section{Preliminaries}\label{sec: section2}

% In this section, we hava a basic overview of Consensus ADMM involved in FL. Then, T-ALADIN is also reviewed.

In this section, we provide formal fundamentals of Consensus ADMM and T-ALADIN.
\subsection{FL via Consensus ADMM}\label{sec: FL via Consensus ADMM}

Assume that we have $N$ clients\footnote{In numerical optimization, one who solves the sub-problem is called agent, and in FL it is called client. In this paper, we use the two notions interchangeably. 
} and each of those has a local dataset $\mathcal D_i$ where $i\in \{ 1, \dots, N\}$. 
Here, the loss function of client $i$ is defined as
\begin{equation}
f_i(z) = \alpha_i\sum_{t_i\in \mathcal{D}_i} l_i(z; t_i), 
\end{equation}
where $l_i(z,t_i): \mathbb R^{n\times|t_i|}\rightarrow \mathbb R$ is a mapping for measuring the prediction error of global variable $z\in \mathbb R^n$.
Moreover $\alpha_i$s are the positive weights with $\sum_{i=1}^{N} \alpha_i =1$. The main goal of machine learning is to solve the following joint optimization problem: 
\begin{equation}\label{eq: FL original}
\min_{z\in \mathbb R^n}	F(z) = \sum_{i=1}^{N} f_i(z).
\end{equation}

However, $\mathcal D_i$s usually belongs to different clients and can not be shared with each other.  
To deal with this, instead of solving Problem~\eqref{eq: FL original}, FL solves the reformulated Problem~\eqref{eq: FL} in a distributed way.
%Naturally, Equation~\eqref{eq: FL original} can be reformulated as the following DC problem
\begin{equation}\label{eq: FL}
\begin{split} 
\min_{x_i,z\in \mathbb R^n}\;\;& \sum_{i=1}^{N}f_i(x_i) \\ \quad\mathrm{s.t.}\;\;& x_i = z \;|\lambda_i. \;
\end{split}
\end{equation}
Here $x_{i}$ denotes the local primal variable of agent $i$ and $z$ indicates the primal global variable. By using   Lagrange multiplier (dual variable) $\lambda_i$,
the corresponding Lagrangian function can be expressed as
\begin{equation}\label{eq: ADMM Lagrangian}
\begin{split}
\mathcal{L}(x_i, z, \lambda_{i})=&\sum_{i=1}^{N}f_i(x_i)
\\
&+\sum_{i=1}^{N}\lambda_{i}^\top (x_i-z) + \sum_{i=1}^{N}\frac{\rho}{2}\|x_i-z\|^2.
\end{split}
\end{equation}
Here, $\rho$ is a given positive penalty parameter. From \eqref{eq: ADMM Lagrangian}, in the \texttt{FedADMM}, the local primal and dual variables can be updated as Algorithm \ref{alg:FedADMM} with learning rate $\eta_i$.
Note that the uploaded information $w_i$s  from client side of Algorithm \ref{alg:FedADMM} is a linear combination of the local primal and the dual variables, which is a secure way of protecting clients' local information.

%\begin{equation}\label{eq : fedadmm}
%	\left\{\begin{split}
%		{x_i}^+&=\mathop{\arg\min}_{x_i} f_i(x_i)+\lambda_{i}^\top x_i +\frac{\rho}{2}  \|x_i -z\|^2, \\
%		\lambda_{i}^+&=\lambda_{i}+\rho({x_i}^+-z),\\
%		z^+&=	\frac{1}{N} \sum_{i=1}^{N}\left(x_i^++ \frac{\lambda_i^+}{\rho}\right).\\
%	\end{split}\right.
%\end{equation}
%Given Equation \eqref{eq : fedadmm}, if the local private variable $x_i$ is updated inexactly, we can get \texttt{FedADMM} provided in Algorithm \ref{alg:FedADMM}.
\begin{algorithm}[H]
\small
\caption{\texttt{FedADMM}: Consensus ADMM for FL}
\textbf{Initialization:} Initial guess  of global model $z=0$, local model $x_i^-=0$s and  dual variables $\lambda_i=0$. Set  Total number of rounds $T$ and  penalty parameter $\rho$.
\vspace{0.1em}	

\textbf{For $t=1\dots T$}
\texttt{Clients:} // In parallel\\
\textbf{ \hspace*{0.5em} For $i=1\dots N$}\\
%	\begin{enumerate}
\hspace*{1em}Download $z$ from the server\\
\hspace*{0.8em}  Locally update $ w_i\leftarrow  \texttt{ClientUpdate} (z,i)$\\
\hspace*{1.1em}Upload $w_i$ to the server\\
%	\end{enumerate}
\textbf{ \hspace*{0.3em} End}

\texttt{Server:}
$
z = \frac{1}{N} \sum_{i=1}^{N} w_i.
$\\
\textbf{End}

\vspace{0.1em}	
$\texttt{ClientUpdate} (z,i)$:\\
\textbf{Input:} Local epoch number $E_i$, client learning rate $\eta_i$.\\
%	\begin{enumerate}
%	\end{enumerate}

\hspace*{0.5em}	\textbf{For $e=1\dots E_i$}

%\begin{enumerate}
\hspace*{1em}	${x_i}=x_i-\eta_i\left(  \nabla f_i(x_i)+\lambda_i^{\text{ADMM}}+\rho\left( x_i-z \right) \right)$
%	\end{enumerate}

\hspace*{0.5em}	\textbf{End}

\hspace*{0.5em}$\lambda_{i}^{\text{ADMM}}=\lambda_{i}^{\text{ADMM}}+\rho({x_i}-z)$.

%	\begin{enumerate}
%	\hspace*{0.5em} $	g_i=\rho(z-x_i^+)-\lambda_i$\\
\vspace{0.5em}
\hspace*{0.5em}	 $w_i=x_i+\frac{1}{\rho} \lambda_{i}^{\text{ADMM}}$
%	\end{enumerate}

\textbf{return:} $w_i$

\label{alg:FedADMM}
\end{algorithm}

\begin{remark}
The private variables $x_i$ can be updated by applying any approximation technologies such as decentralized linearized alternating direction method of multipliers (DLM) \cite{DLM} or decentralized quadratically approximated alternating direction method of multipliers (DQM) \cite{DQM}. 
\end{remark}

\begin{remark}
As pointed out in \cite[Section IV]{yang2022survey}, a Consensus ADMM variation \cite{shi2014linear} was specifically designed  for solving Problem \eqref{eq: FL} instead of Problem \eqref{eq: DOPT_G} and should be considered a relatively independent algorithm. 
\end{remark}

%	\begin{algorithm}[H]
%		%	\small
%		\caption{\texttt{FedADMM}: Consensus ADMM for FL}
%		%\textbf{Initialization:} Initial guess $(p^0,\lambda^0,\mu^0)$, choose $\Sigma_i,\rho^0,\mu^0,\epsilon$. \\
%		\textbf{Repeat:}
%		\begin{enumerate}
%			\item Each client learn their parameter $(x_i^+, \lambda_{i}^+)$ locallly and uplaod it to the server:
%			\begin{equation}
%				\left\{\begin{split}
%					{x_i}^+&=\mathop{\arg\min}_{x_i} f_i(x_i)+\lambda_{i}^\top x_i +\frac{\rho}{2}  \|x_i -z\|^2 \\
%					\lambda_{i}^+&=\lambda_{i}+\rho({x_i}^+-z).
%				\end{split}\right.
%			\end{equation}
%		Compute $w_i = x_i^++\frac{1}{\rho} \lambda_{i}^+$ and transmit it to the server.
%			\item The server collects updated local variables ${x_i}^+$ from each clients and do average aggregate sum 
%			\begin{equation}
%				\begin{split}
%					z^*&= \mathop{\arg\min}_{z} \sum_{i=1}^{N}\left( \frac{\rho}{2} \|x_i^+-z\|^2-(\lambda_{i}^+)^\top z\right)\\
%					&=\frac{1}{N} \sum_{i=1}^{N}x_i^+ +\frac{1}{N\rho}\sum_{i=1}^{N}\lambda_{i}^+\\
%					&=\frac{1}{N} \sum_{i=1}^{N} w_i.
%				\end{split}
%			\end{equation}
%			and broadcast the global model $z\leftarrow z^*$.
%		\end{enumerate}
%	%	\label{alg:FedADMM}
%	\end{algorithm}
%%

%Note that, the (stochastic) gradient descent method is used in the local primal update. This does not guarantee that the subproblem is solved accurately. This is a major difference between FL and distributed optimization, and results in the fact that most optimization algorithms cannot be applied directly for FL.

\subsection{T-ALADIN in A Nutshell}\label{sec: ALADIN chapter}
T-ALADIN is the first distributed optimization algorithm that can generally solve non-convex DO \eqref{eq: DOPT_G}.

In the first step, Algorithm~\ref{alg:ALADIN} has a similar operation as ADMM and gets new local optimizers from each client. Later we evaluate the gradients $g_i$s and positive definite Hessian matrices approximation $H_i$s with $\xi_i^+$s. In the third step, we solve a large-scale QP for coordination by using $g_i$s and $H_i$s from the second step. In the end, we send the updated primal and dual variables to each client in the forth step.

The main difference between ADMM and T-ALADIN is that the latter updates the global dual variable $\mu$ by solving the constrained coupled QP (Equation \eqref{eq: ALADIN-coupled QP}). %by using Hessian and gradient from each subproblems in the third step of Algorithm~\ref{alg:ALADIN}.  
Moreover, with the assumption of linearly independent constraint qualification (LICQ) and second order sufficient condition (SOSC), Algorithm~\ref{alg:ALADIN} has local convergence guarantees for distributed non-convex problems \cite{Houska2016}.

Note that T-ALADIN tries to collect all the good properties of SQP and ADMM. There are two main extreme cases:
\begin{itemize}
\item When $\rho\rightarrow \infty$, the first step of Algorithm~\ref{alg:ALADIN} is redundant
and the whole structure is equivalent to SQP.
\item When $\rho\rightarrow 0$, Algorithm~\ref{alg:ALADIN} equals a combination of DD and Newton's method.
%	\item When $\infty >\rho> 0$ and $H_i=A_i^\top A_i$, ALADIN  
%	equals to a variant of ADMM.
\end{itemize}

\begin{algorithm}[H]
\small
\caption{Typical ALADIN}
\textbf{Initialization:} Initial guess of primal and dual variables $(y_i,\mu)$.  \\
\textbf{Repeat:}
\begin{enumerate}
\item Parallelly solve local nonlinear programming (NLP) without subsystem coupling:
\begin{equation}\label{ALADIN-step1}
{\xi_i}^+=\mathop{\arg\min}_{\xi_i} f_i(\xi_i)+\mu^\top A_i \xi_i+\frac{\rho}{2}\|\xi_i-y_i\|^2.
\end{equation}
\item Evaluate Hessian approximation and gradient from ${\xi_i}^+$:
\begin{equation}\label{eq: ALADIN upload}
\left\{
\begin{array}{l}
\begin{split}
B_{i}\approx &\nabla^2 f_i(\xi_i^+)\succ 0, \\
g_{i}=&\nabla f_i(\xi_i^+).
\end{split}
\end{array}
\right.
\end{equation}
\item Solve the coupled QP on master side:
\begin{equation}\label{eq: ALADIN-coupled QP}
\begin{split}
\mathop{\mathrm{\min}}_{ \Delta \xi_i}& \quad \mathop{\sum}_{i=1}^{N} \frac{1}{2}\Delta \xi_i^\top B_{i} \Delta \xi_i+g_{i}^\top \Delta \xi_i \\
\mathrm{s.t.} &\mathop{\sum}_{i=1}^{N} A_i(\xi_i^++\Delta \xi_i)=b\; |{\mu}^+	.
\end{split}
\end{equation}

\item Download :
\begin{equation}
\left\{
\begin{array}{l}
\begin{split}
\mu& \leftarrow {\mu}^+\\
y_{i}&\leftarrow \xi_i^++\Delta \xi_i
\end{split}
\end{array}
\right.
\end{equation} 
\end{enumerate}
\label{alg:ALADIN}
\end{algorithm}

\begin{remark}
To the best of our knowledge, although a convex LASSO problem has been solved with T-ALADIN \cite[Section 5.1]{houska2017convex}, is a two objectives optimization problem. Besides, no such concept like \emph{consensus ALADIN} has been formally proposed by others.
\end{remark}

\section{Consensus ALADIN}\label{sec: C-ALADIN}
% \st{T-ALADIN is the first second-order algorithm in the area of distributed optimization. Whether second-order (approximate) information is used is not a classification of an algorithm in distributed optimization. In fact, it is that the increasement of the primal variables are optimized together when the dual variable is updated. The special operation in ALADIN can speed up convergence to satisfy the coupling constraints. When the subproblems are unconstrained, Algorithm~\ref{alg:ALADIN} can converge to a feasible solution with only one iteration. }

%In this section, we will show our methods for solving the three challenges mentioned Section \ref{sec:}. %Moreover, different from Algorithm \ref{alg:ALADIN}, the proposed algorithm is mainly designed for solving Problem \eqref{eq: FL}.

In Subsection \ref{sec: ALADIN structure},  we propose the consensus QP.% to simplify the equality constraints.
In Subsection~\ref{sec: Derivative-Free Consensus ALADIN}, we detail our communication-efficient design.  
%we decode the corresponding local Hessian and gradient with BFGS method for improving uploading efficiency. On the downloading side, modified with KKT expression. 
%inspired by \cite{houska2017convex}, we point out that C-ALADIN can still survive without transmitting the corresponding Hessian and gradient \eqref{eq: ALADIN upload}. 
In Subsection~\ref{sec: Sparse QP Solver}, we provide our techniques that improve computational efficiency. In Subsection \ref{sec: conALADIN}, with the techniques proposed in the above subsections, we formally describe C-ALADIN with two variants, namely Consensus BFGS ALADIN and Reduced Consensus ALADIN.

% Subsequently, we analyze the difference between Consensus ADMM and Reduced Consensus ALADIN in Subsection \ref{sec: Comparison}.

%	\newpage

\subsection{From Coupled QP to Consensus QP}\label{sec: ALADIN structure}
Remember that in Subsection \ref{sec: The Road to Consensus ALADIN}, T-ALADIN is struggling with solving Problem \eqref{eq: FL} because of the coupling equality constraints in  Equation \eqref{eq: ALADIN-coupled QP}. Thus, the first step towards making T-ALADIN survive in  \eqref{eq: FL} is to
reconstruct Equation \eqref{eq: ALADIN-coupled QP}, which yields our consensus QP that is formally shown as follows: 
% With the reconstruction of the coupling equality constraints, Equation \eqref{eq: ALADIN-coupled QP} can be changed into \eqref{eq: consensus QP} in the consensus sense.
\begin{equation}\label{eq: consensus QP}
\begin{split}
\mathop{\mathrm{\min}}_{ \Delta x_i, z}& \quad \mathop{\sum}_{i=1}^{N} \left(\frac{1}{2}\Delta x_i^\top B_i \Delta x_i+g_{i}^\top \Delta x_i\right) \\
\mathrm{s.t.} &\qquad\Delta x_i+x_i^+=z\; |\lambda_{i}.
\end{split}
\end{equation}
It is trivial to see that, in \eqref{eq: consensus QP}, by introducing a global variable $z$ and coupling all $x_i$s to $z$. The number of coupled equality constraints will be reduced to $O(N)$ which is lower than $O(N^2)$ that comes from the direct adoption of T-ALADIN (shown as Figure \ref{fig: dense} and \ref{fig: sparse}). Note that,  
% the only difference, between  and , 
in Equation \eqref{eq: consensus QP}, we have one global primal variable $z$ and $N$ dual variable, which is in contrast to the formulation of Equation \eqref{eq: ALADIN-coupled QP}.

%one global dual variable $\mu$ and $N$ private primal variables $\xi_i$s are consist in the T-ALADIN fashion, while C-ALADIN is the opposite.
% When T-ALADIN is applied directly to distributed consensus optimization, the number of corresponding equality constraint will be of order of $O(N^2)$.  
\begin{figure}[htbp]
\centering
\begin{minipage}{0.49\linewidth}
\centering
\includegraphics[width=1\linewidth]{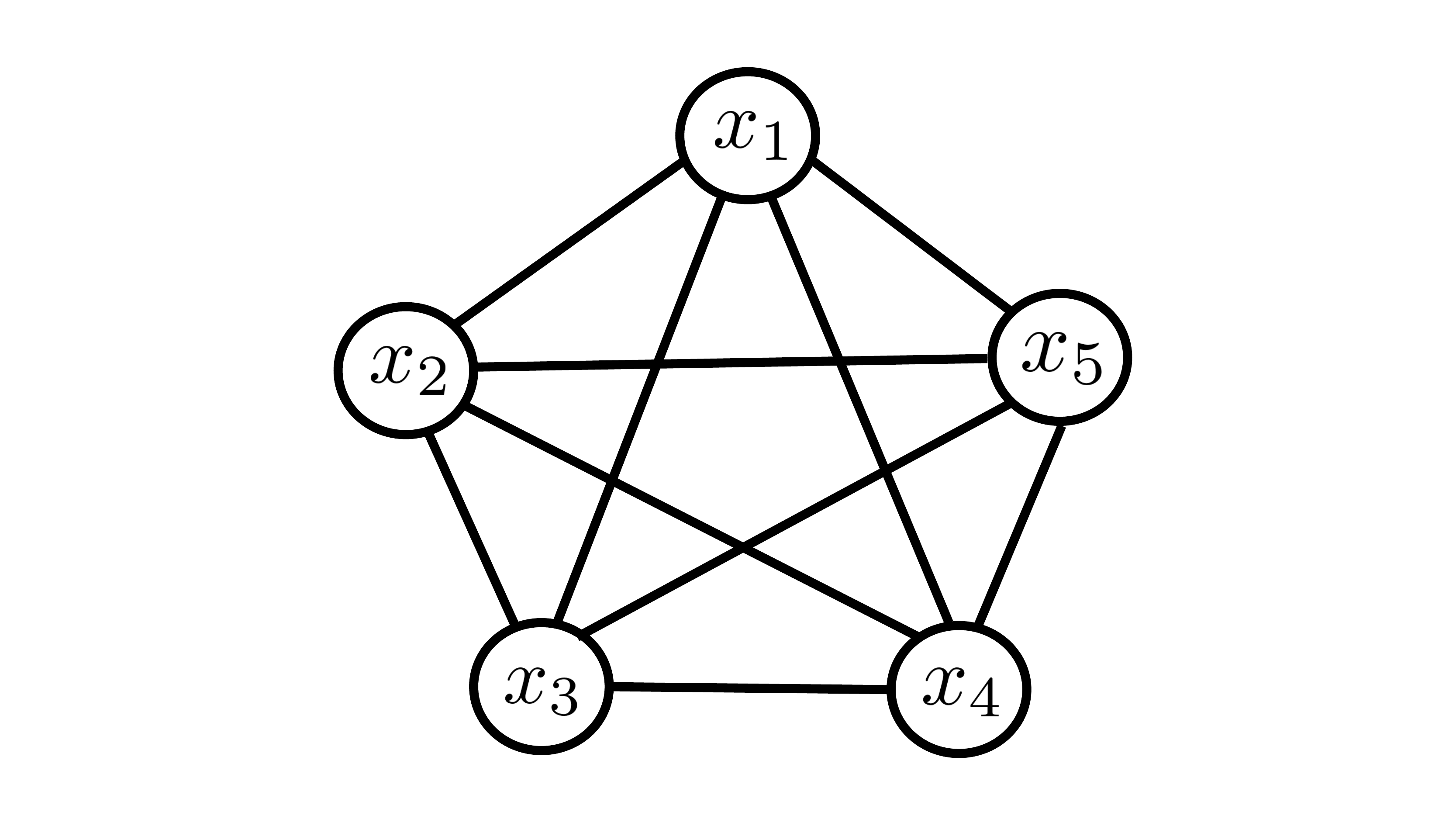}
\caption{Fully Connection \\(Worst case scenario).}
%	\caption{Linear regression with different $\rho$ (convex problem).}
\label{fig: dense}%文中引用该图片代号
\end{minipage}
\begin{minipage}{0.49\linewidth}
\centering
\includegraphics[width=1\linewidth]{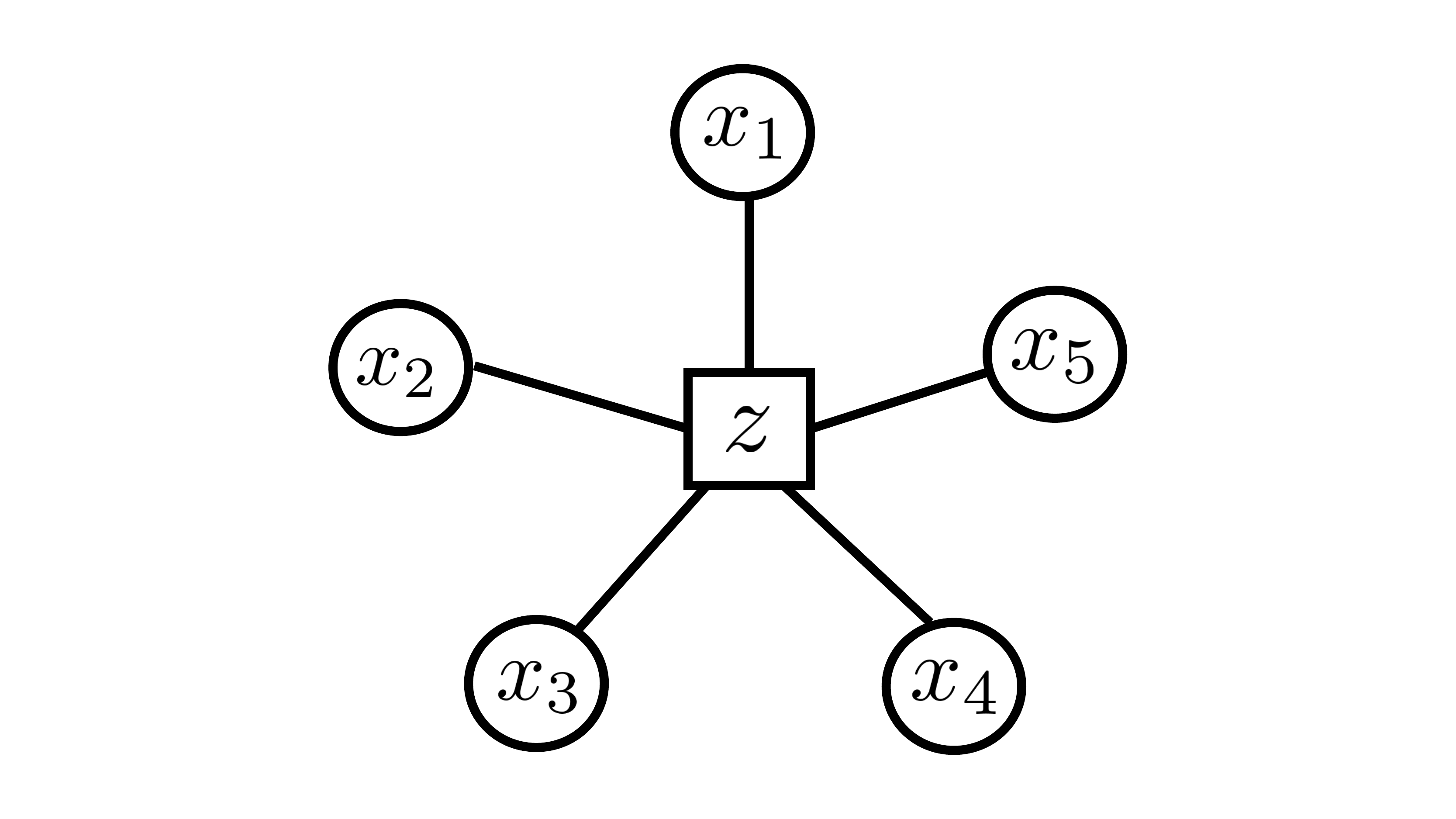}
\caption{Private-master coupling.}
\label{fig: sparse}%文中引用该图片代号
\end{minipage}
\end{figure}

%; b) the constraints of consensus QP \eqref{eq: ALADIN-coupled QP} and \eqref{eq: consensus QP} in the two frameworks are different since the global primal variable $z$.
%
After replacing Equation \eqref{eq: ALADIN-coupled QP} with Equation \eqref{eq: consensus QP}, Algorithm \ref{alg:ALADIN} starts working in a consensus fashion. We name this C-ALADIN.

% We call this algorithm, based on Equation \eqref{eq: consensus QP}, as C-ALADIN. It can be treated as a new member of the ALADIN family.

\begin{remark}\label{remark: T-ALADIN}
Regarding the number of coupling constraints of T-ALADIN, we want to stress that $O(N^2)$, corresponding to the Figure \ref{fig: dense}, describes the worst case when directly adopting T-ALADIN in solving DC problems. As for the best case, though T-ALADIN can work under $O(N)$ coupling constraints as those of C-ALADIN, it needs a fine-grained design on $A_i$, which, from user's perspective, hinders the ease of practical adoption of T-ALADIN.
\end{remark}

%\begin{equation}\label{eq: coupling}
%	\begin{split}
%	A=	\left\{
%		\underbrace{\begin{bmatrix}
%				\mathbf{I}\\
%				\mathbf{I}\\
%				\mathbf{I}\\
%				\mathbf{I}\\
%				\mathbf{0}\\
%				\mathbf{0}\\
%				\mathbf{0}\\
%				\mathbf{0}
%		\end{bmatrix}}_{A_1} \underbrace{\begin{bmatrix}
%	\mathbf{-I}\\
%	\mathbf{0}\\
%	\mathbf{0}\\
%	\mathbf{0}\\
%	\mathbf{I}\\
%		\mathbf{I}\\
%	\mathbf{I}\\
%	\mathbf{0}
%	\end{bmatrix}}_{A_2}
%\underbrace{\begin{bmatrix}
%		\mathbf{0}\\
%		\mathbf{-I}\\
%		\mathbf{0}\\
%		\mathbf{0}\\
%		\mathbf{0}\\
%			\mathbf{0}\\
%		\mathbf{0}\\
%		\mathbf{0}
%\end{bmatrix}}_{A_3}
%\underbrace{\begin{bmatrix}
%		\mathbf{0}\\
%		\mathbf{0}\\
%		\mathbf{-I}\\
%		\mathbf{0}\\
%		\mathbf{0}\\
%			\mathbf{0}\\
%		\mathbf{0}\\
%		\mathbf{0}
%\end{bmatrix}}_{A_4}
%\underbrace{\begin{bmatrix}
		%		\mathbf{0}\\
		%		\mathbf{0}\\
		%		\mathbf{0}\\
		%		\mathbf{-I}\\
		%		\mathbf{0}\\
		%			\mathbf{0}\\
		%		\mathbf{0}\\
		%		\mathbf{0}
		%\end{bmatrix}}_{A_5}\right\}
		%	\end{split}
	%\end{equation}

	%\begin{figure}[H]
	%	\centering
	%	\includegraphics[width=0.48\textwidth,height=0.2\textheight]{Img/ALADIN-FAMILY}
	%	\caption{The ALADIN family.}
	%	\label{fig: ALADIN-FAMILY}
	%\end{figure}
	
	\subsection{Improve the Communication Efficiency of C-ALADIN}\label{sec: Derivative-Free Consensus ALADIN}
	In this subsection, we jointly improve the upload \ref{sec: upload} and download \ref{sec: download} communication efficiency of C-ALADIN.

	% the communication efficiency design of C-ALADIN is divided into two parts: the upload \ref{sec: upload} and download \ref{sec: download}.

	%  we will show an efficient way of
	%			the upload \ref{sec: upload} and download in C-ALADIN \ref{sec: download}.
	
	\subsubsection{Improving Upload Communication Efficiency}\label{sec: upload}
	In the C-ALADIN framework, instead of solving Problem \eqref{ALADIN-step1}, we solve the following decoupled augmented loss function \eqref{eq: consensus ALADIN-step1} with local dual $\lambda_i$.
	\begin{equation}\label{eq: consensus ALADIN-step1}
		{x_i}^+=\mathop{\arg\min}_{x_i} f_i(x_i)+\lambda_i^\top  x_i+\frac{\rho}{2}\|x_i-z\|^2.
	\end{equation}
	
	In order to avoid uploading the gradient and Hessian approximation directly \eqref{eq: ALADIN upload}, we choose to decode the first and second order information on the master side, which is detailed in \eqref{eq: BFGS}
	\begin{equation}\label{eq: BFGS}
		\left\{
		\begin{split}
			&g_i(x_i^+)=\;\rho(z-x_i^+)-\lambda_i\quad\text{((sub)gradient)},\\
			&s_i(x_i^+,x_i^-)=\;x_i^+-x_i^-,\\
			&y_i(x_i^+,x_i^-)=\; g_i(x_i^+)-g_i^-,\\
			&B_i^{+}=\;B_i-\frac{B_i s_i s_i^\top B_i}{s_i^\top B_i s_i}+\frac{y_iy_i^\top}{s_i^\top y_i}\quad\text{(BFGS update)}.
		\end{split}
		\right.
	\end{equation}
	We want to stress several key designs behind Equation \eqref{eq: BFGS}. First, we suppose that the augmented NLP \eqref{eq: consensus ALADIN-step1} can be solved exactly. By applying Clarke sub-differential of $f_i$ at $x_i^+$, we have $g_i=\rho(z-x_i^+)-\lambda_i\in \partial f_i$.  It means that the (sub)gradient can be decoded with $x_i^+$ without being transmitted.
	
	Second, by applying the difference of local private variables and the local (sub)gradients, the BFGS Hessian approximation can also be decoded by the master. In order to ensure the positive definiteness of local BFGS matrices, we adopted the strategy of \emph{damped BFGS} \cite[Page 537]{Nocedal2006}, that is, we modify the local gradient difference $y_i$ 
	\[y_i=y_i + \theta (B_is_i-y_i)\]with a turning parameter 
	\[\theta= \frac{0.2(s_i)^\top B_i s_i-(s_i)^\top y_i}{(s_i)^\top B_i s_i-(s_i)^\top y_i}\]
	if 
	\[(y_i)^\top s_i\leq\frac{1}{5} (s_i)^\top B_i s_i.\]
	Note that if $\theta =1 $, $B_i^{+}=B_i$.  The damping thus ensures that the positive curvature of the Hessian in direction $s_i$, which is expressed in the term $\left((s_i)^\top B_i s_i\right)$, will never decrease
	by more than a factor of 5. Modified versions of BFGS may also work.For example, to solve the storage problem of the BFGS approximation Hessian, \emph{limited memory BFGS} (L-BFGS) maybe a promising solution.
	
	%Note that the local convergence can be guaranteed for two assumptions:
	%\begin{itemize}
	%	\item $B_i$ satisfies $\nabla^2 f_i(x_i^+)+ \rho B_i\succ 0$, and this indicates $x_i^+$ is  at least a strickly local minimizer of the decoupled augmented problems.
	%	\item Local linear or superlinear convergence can be guaranteed if $\left\| \nabla^2f_i(x_i)-B_i \right\|$ is sufficiently small.
	%\end{itemize}
	\begin{remark}
		If we are dealing with a single objective problem, instead of computing the inverse of $B_i^+$ directly, we have the following closed form:
		\begin{equation*}
			\begin{split}
				(B^{k+1})^{-1}= &(B^k)^{-1}+\frac{(s^\top y+y^\top (B^{k})^{-1}y)(ss^\top)}{(s^\top y)^2}\\ &-\frac{(B^k)^{-1}y s^\top +s y^\top (B^k)^{-1}}{s^\top y}.
			\end{split}
		\end{equation*}It can not be applied directly if we have a summation of $B_i$s.
	\end{remark}

	\subsubsection{Improving Download Communication Efficiency}\label{sec: download}
	If we express the Lagrange function of Equation~\eqref{eq: consensus QP},
	\begin{equation}%\label{eq: KKT}
		\begin{split}
			\mathcal{L}^{\text{QP}}(\Delta x_i, z, \lambda_{i})=&\left(\sum_{i=1}^{N}\frac{1}{2}\Delta x_i^\top B_i \Delta x_i+g_i^\top \Delta x_i\right)\\
			&+ \left(\sum_{i=1}^{N}\lambda_{i}^\top (\Delta x_i+x_i^+-z)\right),
		\end{split}
	\end{equation}
	the KKT system can be then expressed in the following three equations,
	\begin{equation}\label{eq: KKT}
		\left\{
		\begin{split}
			\frac{\partial \mathcal L^{\text{QP}}}{\partial \Delta x_i}&=B_i\Delta x_i+g_i+\lambda_{i} =0,\\
			\frac{\partial \mathcal L^{\text{QP}}}{\partial \lambda_{i}}&= \Delta x_i+x_i^+-z=0,\\
			\frac{\partial \mathcal L^{\text{QP}}}{\partial z}&= -\sum_{i=1}^{N}\lambda_{i}=0,
		\end{split}\right.
	\end{equation}
	which implies $\Delta x_i=z-x_i^+$ and $\lambda_{i}=B_i(x_i^+-z)-g_i$. It shows that  agents can decode the dual variable $\lambda_i$ with the global variable $z$ without being transmitted.

	%With the above discussion, we can simplify the transmission data and the new \emph{derivative-free consensus ALADIN} algorithm can be summarized as Algorithm \ref{alg:Derivative-Free consensus ALADIN}.

	In summary, in C-ALADIN, the agents only upload their private variables update $x_i^+$ to the master while the master broadcast the global variable (aggregate model) $z$. In this way, neither Hessian nor gradient needs to be uploaded and the dual variables need not to be downloaded.

	%Traditional federated learning algorithms like \texttt{Fedavg}, \texttt{FedSGD} 
	%or \texttt{FedProx} only have a average operation on the server side, however it does not bring out the full computation potential of the server.
	%It is clear that when $N=2$, Algorithm \ref{alg:Derivative-Free consensus ALADIN} almost  boils down to a special case of Algorithm \ref{alg:ALADIN}. However,  since the existence of global variable $z$, when $N\neq 2$, Algorithm \ref{alg:Derivative-Free consensus ALADIN} and Algorithm \ref{alg:ALADIN} are not mutually dependent. The global variable $z$ is an auxiliary variable without its own objective. On the other hand, in the framework of consensus ALADIN, multiple dual variables are need while only one is needed in standard ALADIN. The difference the two algorithms  performs a symmetric statement between the primal and the dual.
	%It can also be seen from the Section \eqref{sec: convergence}, although our global convergence analysis inherits the idea of global convergence proof of \cite{Houska2021}  with convex problem,  the corresponding Lyapunov functions are not the same.
	
	Improving the communication efficiency mentioned above is not the end. To further enhance our algorithm, we next boost computational efficiency.

	\subsection{Improve the Computational Efficiency of C-ALADIN}\label{sec: Sparse QP Solver}
	
	To improve the computational efficiency, a trivial approach is to seek help from existing QP solvers which is described as follows:
	% For the coordinate step \eqref{eq: consensus QP}, 
	% large scale sparse QP solver can be prepared. 
	solvers based on active set \texttt{qpOASES} \cite{ferreau2014qpoases}, \texttt{MOSEK} \cite{aps2019mosek}, \texttt{GUROBI} \cite{gurobi2018gurobi}; solvers based on interior point methods \texttt{CVXGEN} \cite{mattingley2012cvxgen}  and \texttt{OOQP} \cite{gertz2003object}; solvers basd on ADMM or operator splitting method  \texttt{OSQP} \cite{osqp}.
	
	However, such solvers ignore the special structure of our input \eqref{eq: consensus QP}. Therefore, a customized QP solver is needed for improving the computational efficiency.
	%	\subsection{Dense CLose From of the Global Parameter}	
	Next, we show our technical details. 
	
	The global primal variable can be updated in a cheap operation by introducing the following theorem.
	\begin{theorem}
		With the decoding of Hessian approximation and the gradients of the agents by Equation \eqref{eq: BFGS}, the global $z$ can be updated as Equation \eqref{eq: reduced QP}  \footnote{Assume that the matrix inverse operation can be applied on the master side, instead of an inverse operation, \emph{conjugate gradient descent} can be also applied here. }\footnote{ Suppose that for iteration $k$ with $\text{log}_K(k)\in \mathbb{N} $ \cite{Shi2022}, the Hessian matrices can be chosen optionally to upload from the agent to the master. The decomposition of a positive definite matrix can be prepared before the consensus QP step \eqref{eq: consensus QP}. }.
		\begin{equation}\label{eq: reduced QP}
			\begin{split}
				z^{+}_{\text{BFGS}}=\left(\sum_{i=1}^N  B_i^{+} \right)^{-1}\left( \left( \sum_{i=1}^N B_i^{+}x_i^+\right) -  \left( \sum_{i=1}^N g_i  \right) \right). 
				%	=&\frac{1}{N} \sum_{i=1}^{N} \left(x_i^+- \frac{g_i}{\rho}\right).
			\end{split}
		\end{equation}
	\end{theorem}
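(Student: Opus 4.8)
The plan is to obtain Equation~\eqref{eq: reduced QP} directly from the KKT optimality system~\eqref{eq: KKT} of the consensus QP~\eqref{eq: consensus QP}, where the Hessian blocks are the updated approximations $B_i^{+}$ recovered from $x_i^{+}$ via the damped BFGS formula in~\eqref{eq: BFGS}. Since each $B_i^{+}$ is positive definite (this is exactly what the damping of $y_i$ guarantees, as discussed after~\eqref{eq: BFGS}), the quadratic objective of~\eqref{eq: consensus QP} is strictly convex and the linear consensus constraints $\Delta x_i + x_i^{+} = z$ are trivially feasible; hence the consensus QP has a unique minimizer that is fully characterized by the stationarity, primal-feasibility, and dual-feasibility equations listed in~\eqref{eq: KKT}. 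So the whole argument reduces to eliminating $\Delta x_i$ and $\lambda_i$ from that linear system.

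First I would use the primal-feasibility block $\Delta x_i + x_i^{+} - z = 0$ to write $\Delta x_i = z - x_i^{+}$, and substitute this into the stationarity block $B_i^{+}\Delta x_i + g_i + \lambda_i = 0$, which gives the closed form $\lambda_i = B_i^{+}(x_i^{+} - z) - g_i$ for each local dual variable — the same identity already used in Subsection~\ref{sec: download} to let the agents decode $\lambda_i$ from $z$. Next I would impose the remaining KKT block $\sum_{i=1}^{N}\lambda_i = 0$: plugging in the expression for $\lambda_i$ yields $\sum_{i=1}^{N} B_i^{+}(x_i^{+} - z) - \sum_{i=1}^{N} g_i = 0$, i.e. $\big(\sum_{i=1}^{N} B_i^{+}\big) z = \big(\sum_{i=1}^{N} B_i^{+} x_i^{+}\big) - \big(\sum_{i=1}^{N} g_i\big)$. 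Solving for $z$ is legitimate because $\sum_{i=1}^{N} B_i^{+}$ is a finite sum of positive definite matrices, hence positive definite and invertible, and this produces precisely the stated $z^{+}_{\text{BFGS}}$ in~\eqref{eq: reduced QP}. Finally I would observe that $g_i$ and $B_i^{+}$ appearing in~\eqref{eq: reduced QP} are exactly the quantities the master reconstructs from $x_i^{+}$ through~\eqref{eq: BFGS}, so the global update needs neither gradients nor Hessians to be transmitted.

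The computation is routine linear algebra, so the only place that needs genuine care is the structural justification: that the KKT system~\eqref{eq: KKT} actually characterizes the (unique) optimum, and that $\sum_i B_i^{+}$ is invertible. Both hinge on the positive definiteness enforced by the damped BFGS update, together with the fact that the consensus constraints are linear (so no constraint qualification is an issue). I therefore expect the "hard part" to be bookkeeping around strict convexity and positive definiteness rather than any delicate estimate, and the proof should be short once those conditions are stated.
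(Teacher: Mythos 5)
Your proposal is correct and follows essentially the same route as the paper: both arguments eliminate $\Delta x_i$ and $\lambda_i$ from the KKT system \eqref{eq: KKT} (the paper does this via a Schur-complement elimination on the block linear system \eqref{eq: dense form}, arriving at the same identity $\lambda_i = B_i(x_i^+ - z) - g_i$ and then invoking $\sum_{i=1}^N \lambda_i = 0$ to solve for $z$). Your added remarks on positive definiteness of $\sum_i B_i^{+}$ and uniqueness of the QP minimizer are a welcome tightening of details the paper leaves implicit, but they do not change the substance of the argument.
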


	\begin{proof}
		
		Linear system \eqref{eq: KKT}  can be expressed as the following  dense form:
		\begin{equation}\label{eq: dense form}
			\underbrace{\begin{bmatrix}
					%\begin{array}{c|c|c}
					\mathcal B&\mathbf I&\mathbf 0\\[0.5mm]
					%	\hline
					\mathbf I&\mathbf 0&-\mathcal I\\[0.5mm]
					%	\hline
					\mathbf 0^\top&(-\mathcal I)^\top& \mathbf 0\\[0.5mm]
					%\end{array}
			\end{bmatrix}}_{M_{\text{KKT}}\in \mathbb R_+^{|(2N+1)n|\times|(2N+1)n|}}	
			\begin{bmatrix}
				\Delta x\\
				%\hline
				\mathbf{\lambda}\\
				%\hline
				z
			\end{bmatrix}=
			\begin{bmatrix}
				-\mathcal G\\
				%\hline
				-x^+\\
				%\hline
				\mathbf 0
			\end{bmatrix}
		\end{equation}
		with 
		\begin{equation*}
			\mathcal B=
			\begin{bmatrix}
				B_1&0&\cdots&0\\
				0&B_2&\cdots&0\\
				\vdots&\vdots&\ddots&\vdots\\
				0&0&\cdots&B_N
			\end{bmatrix},\quad
			\mathbf I= \begin{bmatrix}
				I&0&\cdots&0\\
				0&I&\cdots&0\\
				\vdots&\vdots&\ddots&0\\
				0&\cdots&\cdots&I
			\end{bmatrix},
		\end{equation*}
		\begin{equation*}
			\mathcal I=\begin{bmatrix}
				I&I&I&I
			\end{bmatrix}^\top,
		\end{equation*}
		\begin{equation*}
			\begin{bmatrix}
				\Delta x\\
				%\hline
				\lambda\\
				%\hlin
				z
			\end{bmatrix}=
			\begin{bmatrix}
				\begin{array}{c}
					\Delta x_1\\
					\Delta x_2\\
					\vdots\\
					\Delta x_N\\
					\hline
					\lambda_{1}\\
					\lambda_{2}\\
					\vdots\\
					\lambda_{N}\\
					\hline
					z
				\end{array}
			\end{bmatrix}\quad\text{and} \quad	\begin{bmatrix}
				-\mathcal G\\
				%\hline
				-x^+\\
				%\hline
				\mathbf 0
			\end{bmatrix} =  
			\begin{bmatrix}
				\begin{array}{c}
					-g_1\\
					-g_2\\
					\vdots\\
					-g_N\\
					\hline
					-x_1^+\\
					-x_2^+\\
					\vdots\\
					-x_N^+\\
					\hline
					\mathbf	0
				\end{array}
			\end{bmatrix}.
		\end{equation*}
		Directly solving QP \eqref{eq: consensus QP} is equivalent to solving linear system \eqref{eq: dense form}.  We note that the roadblock to making C-ALADIN computationally efficient is the heavy computational workload,  incurred by deriving the inverse of the large-scale matrix $M_{\text{KKT}}$.   
		
		% C-ALADIN burdens with the inverse of the large-scale matrix $M_{\text{KKT}}$.   

		Inspired by \emph{Schur Complement} we have the following proof. % we find a close form of $z$ update.
		From the first row of Equation~\eqref{eq: dense form}, we have
		\begin{equation}\label{eq: Deltx}
			\Delta x=-\mathcal B^{-1}(\lambda+\mathcal G).
		\end{equation}
		Then plug \eqref{eq: Deltx} into the second row of Equation \eqref{eq: dense form} to get Equation \eqref{eq: Deltx2}.
		\begin{equation}\label{eq: Deltx2}
			\mathcal B^{-1}(\lambda+\mathcal G)+\mathcal I z = x^+\Rightarrow \lambda = \mathcal B(x^+-\mathcal Iz) - \mathcal G. 
		\end{equation}
		Next,  bring Equation \eqref{eq: Deltx2} into the third row of Equation \eqref{eq: dense form}, the following equation will be obtained
		\begin{equation}\label{sec: mix}
			(-\mathcal I)^\top\mathcal B(x^+-\mathcal Iz) +(\mathcal I)^\top \mathcal G=0.
		\end{equation}
		The update of global variable $z$ can be then expressed as follows from Equation \eqref{sec: mix}.
		\begin{equation*}
			\begin{split}
				z^{+}_{\text{BFGS}}=&\underbrace{\left(\mathcal I^\top \mathcal B \mathcal I\right)^{-1}}_{\mathcal K\in \mathbb R_+^{|n|\times |n|}}\left(\mathcal I^\top \mathcal B x^+-(\mathcal I)^\top\mathcal G\right)\\
				=&\left(\sum_{i=1}^N  B_i^{+} \right)^{-1}\left( \left( \sum_{i=1}^N B_i^{+}x_i^+\right) -  \left( \sum_{i=1}^N g_i  \right) \right). 
				%	=&\frac{1}{N} \sum_{i=1}^{N} \left(x_i^+- \frac{g_i}{\rho}\right).
			\end{split}
		\end{equation*}
	\end{proof}
	
	%\begin{equation}\label{eq: reduced QP}
	%	\begin{split}
		%		z^{+}=&\underbrace{\left(\mathcal I^\top \mathcal R \mathcal I\right)^{-1}}_{\mathcal K\in \mathbb R_+^{|m|\times |m|}}\left(\mathcal I^\top \mathcal R x^+-(\mathcal I)^\top\mathcal G\right)\\
		%		=&\left(\sum_{i=1}^N  \rho \right)^{-1}\left( \left( \sum_{i=1}^N \rho x_i^+\right) -  \left( \sum_{i=1}^N g_i  \right) \right)\\
		%		=&\frac{1}{N} \sum_{i=1}^{N} \left(x_i^+- \frac{g_i}{\rho}\right).
		%	\end{split}
	%\end{equation}
	%Or we can raplace it by using (decentralized) conjugate gradient descent.

	As an extension, if $B_i$s are set as $\rho I$, Equation \eqref{eq: reduced QP} 
	is reduced as 
	\begin{equation}\label{eq: reduced QP2}
		z^+_{\rho}=\frac{1}{N} \sum_{i=1}^{N} \left(x_i^+- \frac{g_i}{\rho}\right). 
	\end{equation}
	Here, the expression of \eqref{eq: reduced QP2} has almost the computation complexity as Consensus ADMM since no inverse of aggregated Hessian matrix is needed. 
	%		the global variable $z$ can be updated in the way of equation \eqref{eq: reduced QP} by using the corresponding BFGS Hessian approximation $B_i^+$. 
	Both methods of  
	updating the global variable (\eqref{eq: reduced QP} and \eqref{eq: reduced QP2}) avoid computing $\Delta x_i$ and $\lambda_i$ together.
	In this way, the operation burden of large-scale matrix inversion in Equation \eqref{eq: dense form} is reduced. 
	\begin{remark}
		With Equation \eqref{eq: reduced QP2}, the primal increment $\Delta x_i$ and dual $\lambda_i$ can be then decoded by the agents with
		\begin{equation}\label{eq: decode}
			\left\{
			\begin{split}
				\Delta x_i&=z-x_i^+,\\
				\lambda_{i}&=\rho(x_i^+-z)-g_i
			\end{split}\right.
		\end{equation}
		in the next iteration. 
	\end{remark}
	
	%\begin{equation}
	%	\begin{split}
		%		z^{+}_{\text{BFGS}}=\left(\sum_{i=1}^N  B_i^{+} \right)^{-1}\left( \left( \sum_{i=1}^N B_i^{+}x_i^+\right) -  \left( \sum_{i=1}^N g_i  \right) \right). 
		%	\end{split}
	%\end{equation}
	By combining  the technologies proposed in the above three subsections, 
	we illustrate two variations of C-ALADIN in the next subsection.

	\subsection{Algorithm Structure}\label{sec: conALADIN}
	In this subsection, by combining our proposed techniques described in Subsection \ref{sec: ALADIN structure}-\ref{sec: Sparse QP Solver},  we propose two algorithms. Specifically, one, named \emph{Consensus BFGS ALADIN}, benefits from the techniques of BFGS Hessian approximation (with Equation \eqref{eq: reduced QP}). The other, named \emph{Reduced Consensus ALADIN} (with Equation \eqref{eq: reduced QP2}),  under the scenario where the convergence rate is degraded, can work without the second-order information. We detail Consensus BFGS ALADIN in Algorithm \ref{alg:BFGS ALADIN2}. For \emph{Reduced Consensus ALADIN}, it can be easily got from  by replacing $x_i^{+}$, $\lambda_i$, $g_i$ and $z$ in Algorithm \ref{alg:BFGS ALADIN2} with the ones defined by the following equation:
	% and main step of  Reduced Consensus ALADIN in Equation \eqref{eq: RCA}.
	\begin{equation}\label{eq: RCA}
		\left\{ \begin{split}
			&\lambda_i=\rho(x_i-z)-g_i,\\
			&{x_i}^+=\mathop{\arg\min}_{x_i}f_i(x_i)+\lambda_i^\top  x_i+\frac{\rho}{2}\|x_i-z\|^2,\\
			&g_i=\rho(z-x_i^+)-\lambda_i,\\
			&z=	\frac{1}{N} \sum_{i=1}^{N}\left(x_i^+- \frac{g_i}{\rho}\right).
		\end{split}\right.
	\end{equation}
	Note that, both two algorithms belong to the class of C-ALADIN. % While \emph{derivative-free} here is a setting that Equation \eqref{eq: subgradient} always holds if the sub-problems can be exactly solved. 
	\begin{remark}
		As a supplement, we analyze the difference between the Reduced Consensus ALADIN and Consensus ADMM in Appendix \ref{sec: Comparison}.
	\end{remark}
	
	\begin{algorithm*}
		\small
		\caption{Consensus BFGS   ALADIN}
		\textbf{Initialization:} choose $\rho>0$, initial guess $(\lambda_i,z,B_i\succ0)$ (or set $B_i= \rho I$). \\
		\textbf{Repeat:}
		\begin{enumerate}
			\item Each agent optimizes its own variable $x_i$ locally and transmit it to the master
			\begin{equation}\label{eq: NLP}
				{x_i}^+=\mathop{\arg\min}_{x_i} f_i(x_i)+\lambda_i^\top  x_i+\frac{\rho}{2}\|x_i-z\|^2
			\end{equation}
			with $\lambda_{i}=B_i(x_i^--z)-g_i$.
			
			\item  Decode the gradients and Hessian of each sub-problem at the master side.\\
			a) The master encode the  (sub)gradient and BFGS Hessian  from each ${x_i}^+$.
			\begin{equation}\label{eq: subgradient}
				\left\{
				\begin{split}
					g_i(x_i^+)=&\;\rho(z-x_i^+)-\lambda_i\quad& \text{ (local (sub)gradient evaluation) },\\
					s_i(x_i^+,x_i^-)=&\;x_i^+-x_i^-\quad &\text{ (difference of private variables) },\\
					y_i(x_i^+,x_i^-)=&\; g_i(x_i^+)-g_i^-\quad& \text{(difference of local (sub)gradient)) }.
				\end{split}
				\right.
			\end{equation}
			
			b) \text{Modify the local gradient based on the following condition}
			
			\begin{equation*}
				\left\{ 
				\begin{split}
					&y_i=y_i + \theta (B_is_i-y_i)\; \text{where}\; \theta= \frac{0.2(s_i)^\top B_i s_i-(s_i)^\top y_i}{(s_i)^\top B_i s_i-(s_i)^\top y_i}, & \text{if}\;(y_i)^\top s_i\leq\frac{1}{5} (s_i)^\top B_i s_i\\
					&y_i=y_i,& \text{otherwise}.
				\end{split}
				\right.
			\end{equation*}

			c) \text{BFGS Hessian approximation evaluation:}
			\begin{equation*}
				B_i^{+}=\;B_i-\frac{B_i s_i s_i^\top B_i}{s_i^\top B_i s_i}+\frac{y_iy_i^\top}{s_i^\top y_i}.
			\end{equation*}
			\item The master solve the following coupled QP with updated $x_i^+$, $g_i$ and $B_i$.
			\begin{equation*}
				\begin{split}
					z^+_{\text{BFGS}}=&\left(\sum_{i=1}^N  B_i^{+} \right)^{-1}\left( \left( \sum_{i=1}^N B_i^{+}x_i^+\right) -  \left( \sum_{i=1}^N g_i(x_i^+)  \right) \right).
				\end{split}
			\end{equation*}
			Broadcast the global model $z$ to the agents. 
		\end{enumerate}
		\label{alg:BFGS ALADIN2}
	\end{algorithm*}
	%		Notice that, the aggregation step of Algorithm \ref{alg:BFGS ALADIN2} depends on the inverse of the aggregated Hessian. It's still tricky at higher subproblem dimensions. 
	%	In the absence of a certain rate of convergence, by setting $B_i=\rho I$ and aggregate with Equation \eqref{eq: reduced QP2}, Algorithm \ref{alg:BFGS ALADIN2} degrade into another structure so called \emph{Reduced Consensus ALADIN}. Clearly by lossing of the second order information of each $f_i$, the corresponding inverse of aggregrated Hessian can be simplified.
	%\begin{algorithm}[H]
	%	%	\small
	%	\caption{Reduced Consensus ALADIN}
	%	\textbf{Initialization:} choose $\rho>0$, initial guess $(\lambda_i,z)$. \\
	%	\textbf{Repeat:}
	%	\begin{enumerate}
		%		\item Each agent optimizer their variable $x_i$ locally and transmit it to the master
		%		\begin{equation}
			%			{x_i}^+=\mathop{\arg\min}_{x_i} f_i(x_i)+\lambda_i^\top  x_i+\frac{\rho}{2}\|x_i-z\|^2
			%		\end{equation}
		%		with $\lambda_{i}=\rho(x_i^--z)-g_i$.
		%		
		%		\item The master and agents can evaluate the new subgradient from each ${x_i}^+$.
		%		\begin{equation}
			%			g_i=\rho(z-x_i^+)-\lambda_i.
			%		\end{equation}
		%		\item The master solve the following coupled QP with updated $x_i^+$ and $g_i$
		%		\begin{equation}
			%			\begin{split}
				%				z^+=\frac{1}{N} \sum_{i=1}^{N} \left(x_i^+- \frac{g_i}{\rho}\right),
				%			\end{split}
			%		\end{equation}
		%		then broadcast the global model $z$. 
		%	\end{enumerate}
	%	\label{alg:Derivative-Free consensus ALADIN2}
	%\end{algorithm}

	In order to expand the reach of our proposed C-ALADIN, by meeting the DC problems in FL, we next introduce a novel way of applying Reduced Consensus ALADIN in this area.
	
	\section{FedALADIN}\label{sec: FedALADIN}

	To meet the DC problems in FL, directly adopting Consensus BFGS ALADIN or Reduced Consensus ALADIN is a straightforward approach to solving such problems. However, we will meet the following challenges: first, since the high dimension of private variables $x_i$s, we cannot benefit from the second order information; second, existing algorithms, when solving the DC problems in FL, does not carefully determine a training \textit{epoch}, which makes the solutions inexact. This challenge renders the failure to directly adopting our techniques that provide exact solutions. By meeting the two challenges, in this section, we carefully design a variant of Reduced Consensus ALADIN, named \texttt{FedALADIN}, that works well for the problems in FL.
	
	We sketch the key design of \texttt{FedALADIN} as follows: By following the conventions in Algorithm \ref{alg:FedADMM} and observing the structure of Equation \eqref{eq: reduced QP2}, each client transmits Equation \eqref{eq: encode} instead of $x_i$.
	\begin{equation}\label{eq: encode}
		w_i= \left(x_i- \frac{g_i}{\rho}\right).
	\end{equation}
	%	If the decoupled NLPs \eqref{eq: NLP} are solved with high accuracy, the corresponding gradient or subgradient can be computed as \eqref{eq: subgradient}. However is not, it can not be decoded by the couped QP step.
	%	Moreover, from the consideration of security issue we could even modify the information that each clients uploaded. Suppose each clients do one more step to encode the transmitted information as	
	%	Then the encoded information has no physical meaning. And even more subtly, from the dual properity in Equation~\eqref{eq: KKT}, the effect of pairing information can be offset only if and only if the attacker gets all the uploaded information as the server. With the new structure, the server doesn't have to maintain the gradients $g_i$s and dual variables $\lambda_i$s of each client. 
	%		It should be pointed out that Equatio ~\eqref{eq: encode} is a consideration of security issue. Another version of \texttt{FedALADIN} could somehow be implemented by maintaining the gradients $g_i$s and dual variables $\lambda_i$s on both sides of clients and servers.
	Next, we detail \texttt{FedALADIN} in Algorithm \ref{alg: FedALADIN}. Different from Algorithm~\ref{alg:FedADMM}, Algorithm~\ref{alg: FedALADIN} obtains the local optimizer by using the decoded $\lambda_i$ from the previous global model $z$. Then we evaluate the (sub)gradients with Equation~\eqref{eq: subgradient}.  Later we encode the transmitted date $w_i$s with Equation~\eqref{eq: encode}. After receiving $w_i$s from each client, the server aggregates them as \texttt{FedADMM}.
	%		\begin{algorithm}[H]
		%		%	\small
		%		\caption{\texttt{FedALADIN}: Derivative-Free Consensus ALADIN for FL}
		%		%\textbf{Initialization:} Initial guess $(p^0,\lambda^0,\mu^0)$, choose $\Sigma_i,\rho^0,\mu^0,\epsilon$. \\
		%		\textbf{Repeat:}
		%		\begin{enumerate}
			%			\item Each client learn their parameter $x_i$ locally and transmit it to the server
			%			\begin{equation}
				%				{x_i}^+=\mathop{\arg\min}_{x_i} f_i(x_i)+\lambda_i^\top  x_i+\frac{\rho}{2}\|x_i-z\|^2
				%			\end{equation}
			%			with $\lambda_{i}=\rho(x_i^--z)-g_i$.
			%			
			%			\item The clients can evaluate the new subgradient from each ${x_i}^+$.
			%			\begin{equation}\label{eq: subgradient}
				%				g_i=\rho(z-x_i^+)-\lambda_i.
				%			\end{equation}
			%			Then do local model regularizer and transmit to the server:
			%			\begin{equation}\label{eq: encode}
				%				w_i= \left(x_i^+- \frac{g_i}{\rho}\right). 
				%			\end{equation}
			%			
			%			\item Broadcast:
			%			\begin{equation}
				%				z = \frac{1}{N} \sum_{i=1}^{N} w_i.
				%			\end{equation} 
			%		\end{enumerate}
		%		\label{alg: FedALADIN}
		%	\end{algorithm}
	
	\begin{algorithm}[H]
		\small
		\caption{\texttt{FedALADIN}}
		\textbf{Initialization:} Initial guess  of global model $z=0$, local model $x_i^-=0$, gradient $g_i=0$ and  dual variables $\lambda_i=0$. Set the total number of rounds $T$ and  penalty parameter $\rho$.
		\vspace{0.1em}	
		
		\textbf{For $t=1\dots T$}
		\texttt{Clients:} // In parallel\\
		\textbf{ \hspace*{0.5em} For $i=1\dots N$}\\
		%	\begin{enumerate}
			\hspace*{1em}Download $z$ from the server\\
			\hspace*{0.8em}  Locally update $ w_i\leftarrow  \texttt{ClientUpdate} (z,i)$\\
			\hspace*{1.1em}Upload $w_i$ to the server\\
			%	\end{enumerate}
		\textbf{ \hspace*{0.3em} End}
		
		\texttt{Server:}
		$
		z = \frac{1}{N} \sum_{i=1}^{N} w_i.
		$\\
		\textbf{End}

		\vspace{0.1em}	
		$\texttt{ClientUpdate} (z,i)$:\\
		\textbf{Input:} Local epoch number $E_i$, client learning rate $\eta_i$.\\
		%	\begin{enumerate}
			\hspace*{0.5em}$\lambda_{i}=\rho(x_i^--z)-g_i$.
			%	\end{enumerate}
		
		\hspace*{0.5em}	\textbf{For $e=1\dots E_i$}
		
		%\begin{enumerate}
		\hspace*{1em}	${x_i}=x_i-\eta_i\left(  \nabla f_i(x_i)+\lambda_i+\rho\left( x_i-z \right) \right)$
		%	\end{enumerate}
	
	\hspace*{0.5em}	\textbf{End}
	
	%	\begin{enumerate}
		\hspace*{0.5em} $	g_i= \nabla f_i(x_i)$\\%\rho(z-x_i)-\lambda_i
		\hspace*{0.5em}	 $w_i=x_i- \frac{g_i}{\rho}$
		%	\end{enumerate}
	
	\textbf{return:} $w_i$
	
	\label{alg: FedALADIN}
\end{algorithm}

Note that the major difference between \texttt{FedALADIN} and \texttt{FedADMM} is the way of dual update. More importantly, the gradient evaluation \eqref{eq: subgradient} is a symmetric operation as the dual update of Equation~\eqref{eq: ADMM1}. %,  while the later steps are almost the same.
However, \texttt{FedALADIN} can benefit from the reduced QP operation compare with the former. In terms of structure, all the existing algorithms in FL are special cases of \texttt{FedALADIN}.
\begin{figure}[H]
	\centering
	\includegraphics[width=0.45\textwidth,height=0.17\textheight]{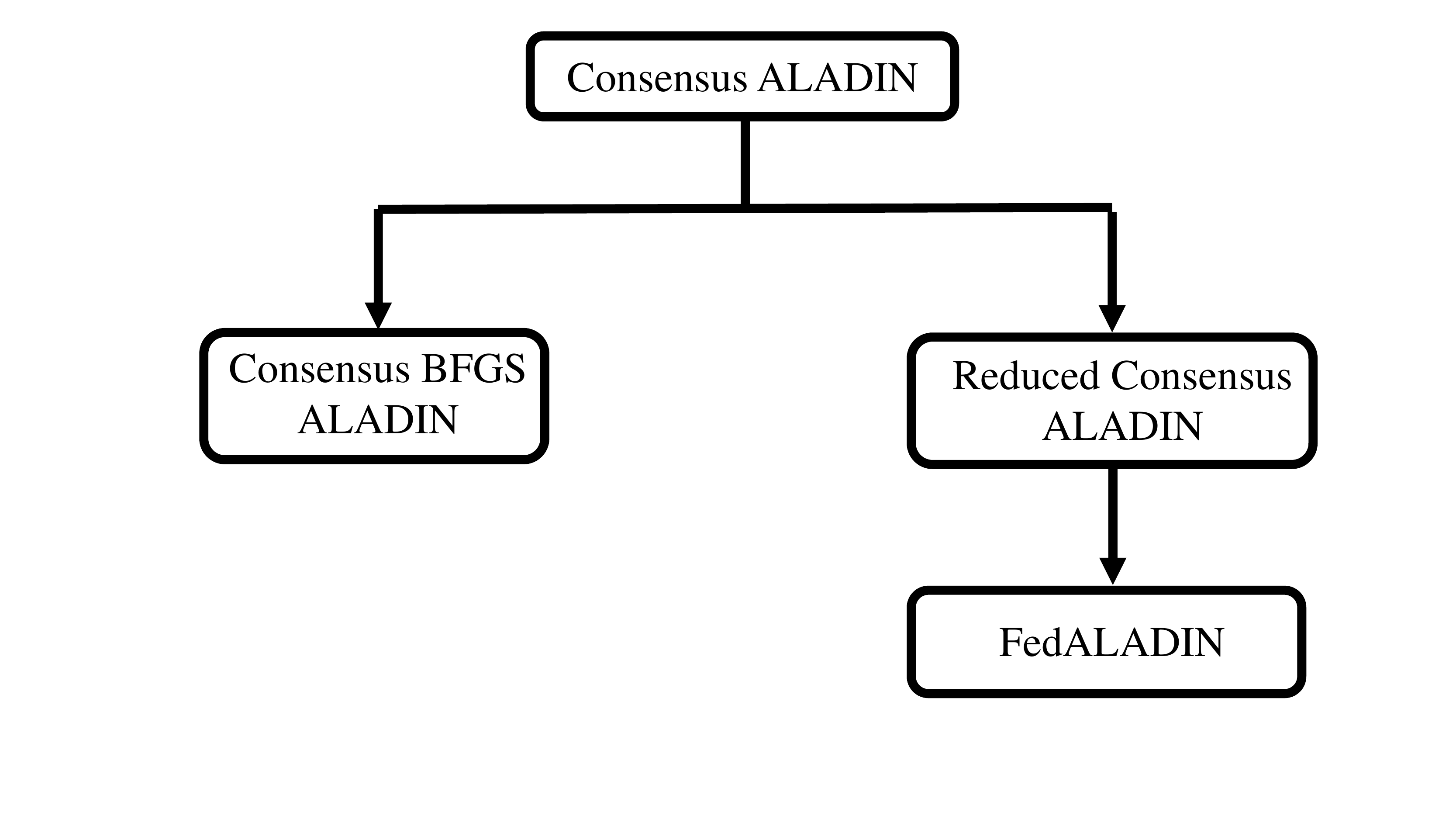}
	\caption{Consensus ALADIN family.}
	\label{fig: CONSENSUS-ALADIN}
\end{figure}

In the next section, we aim to establish the global and local convergence theory of C-ALADIN.

%\newpage

\section{Convergence Analysis}\label{sec: convergence}
In this section, we are interested in the convergence behavior of C-ALADIN, which consists of the following three parts:  
\begin{itemize}
	\item Global convergence of Reduced Consensus ALADIN for convex problems (Subsection \ref{sec: global convergence}).
	\item Global linear convergence rate of Reduced Consensus ALADIN for Lipschitz continuous or strongly convex problems %and some technical assumptions 
	(Subsection \ref{sec: convex rate}).
	\item Local convergence analysis of non-convex problems with C-ALADIN (Subsection \ref{sec: local convergence}).
\end{itemize}

The following equations will be used several times in the following proof. They are provided here for convenience.
% although have appeared
% in Equation~\eqref{eq: reduced QP}, \eqref{eq: subgradient} and \eqref{eq: RCA}.
%	\begin{equation}
	\begin{align}
		g_i&=\rho(z-x_i^+)-\lambda_i,\label{eq: gradient}\\ 
		\lambda_{i}^+&=\rho(x_i^+-z^+)-g_i,\label{eq: lam_plus}\\
		\sum_{i=1}^{N} \lambda_i &= 0,\label{eq: sum_lam} \\ 
		z^{+}&=\frac{1}{N} \sum_{i=1}^{N} \left(x_i^+- \frac{g_i}{\rho} \right),\label{eq: z_plus}  \\
		\sum_{i=1}^{N} x_i^+&=\frac{N}{2}(z^++z), \label{eq: sum_x}\\
		2(a-c)^\top (a-b) &=\|a-c\|^2-\|b-c\|^2+\|a-b\|^2 .\label{eq: quadratic equality}
	\end{align}
	Note that, by plugging Equation~\eqref{eq: gradient} and \eqref{eq: lam_plus} into \eqref{eq: z_plus}, \eqref{eq: sum_x} can be obtained. Equation~\eqref{eq: quadratic equality} comes from \cite{shi2014linear}. 
	
	The following lemma will also be useful in our proofs.
	
	\begin{lemma}
		With the procedure of Algorithm~\ref{alg:FedADMM}, the local primal update has a relationship with the local dual and global primal variables in the following way
		\begin{equation}\label{eq: primal}
			x_i^+ = \frac{\lambda_i^+-\lambda_i}{2\rho}+\frac{z^++z}{2}.
		\end{equation} 
	\end{lemma}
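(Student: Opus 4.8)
The plan is to derive Equation~\eqref{eq: primal} purely algebraically from the two defining relations for $g_i$ and $\lambda_i^+$ that are collected just above the lemma, namely \eqref{eq: gradient} and \eqref{eq: lam_plus}. The key observation is that $g_i$ appears in both: in \eqref{eq: gradient} it is written in terms of $x_i^+$, $z$ and $\lambda_i$, while in \eqref{eq: lam_plus} it is written in terms of $x_i^+$, $z^+$ and $\lambda_i^+$. Eliminating $g_i$ between these two identities should leave a single linear relation among $x_i^+$, $z$, $z^+$, $\lambda_i$ and $\lambda_i^+$, which I then solve for $x_i^+$.

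Concretely, first I would rewrite \eqref{eq: lam_plus} as $g_i = \rho(x_i^+ - z^+) - \lambda_i^+$ and set this equal to the right-hand side of \eqref{eq: gradient}, giving
\[
\rho(x_i^+ - z^+) - \lambda_i^+ = \rho(z - x_i^+) - \lambda_i .
\]
Next I would collect the $x_i^+$ terms on one side: $2\rho\, x_i^+ = \rho z^+ + \rho z + \lambda_i^+ - \lambda_i$. Dividing through by $2\rho$ yields exactly
\[
x_i^+ = \frac{\lambda_i^+ - \lambda_i}{2\rho} + \frac{z^+ + z}{2},
\]
which is Equation~\eqref{eq: primal}. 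That completes the argument.

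There is essentially no obstacle here — the statement is an identity that follows from two substitutions and one rearrangement, so the only thing to be careful about is sign bookkeeping when moving the $\rho x_i^+$ and $\rho z$, $\rho z^+$ terms across the equality. One should also note that the lemma as stated references Algorithm~\ref{alg:FedADMM}, but the relations \eqref{eq: gradient}--\eqref{eq: z_plus} that make it work are the ones governing the C-ALADIN / \texttt{FedALADIN} update (in particular the gradient decoding \eqref{eq: gradient} and dual update \eqref{eq: lam_plus}); it is worth flagging that \eqref{eq: primal} does not actually use the aggregation step \eqref{eq: z_plus} or \eqref{eq: sum_x}, only the per-agent relations, so the result holds agent-by-agent regardless of how $z^+$ is formed.
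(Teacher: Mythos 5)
Your proposal is correct and is essentially the paper's own argument: the paper substitutes \eqref{eq: gradient} into \eqref{eq: lam_plus} to get $\lambda_i^+ = \rho(2x_i^+ - z^+ - z) + \lambda_i$ and rearranges, which is the same elimination of $g_i$ you perform by equating the two expressions for it. Your closing observation that the identity is per-agent and independent of the aggregation step \eqref{eq: z_plus} is accurate but does not change the route.
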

	\begin{proof}
		From Equation~\eqref{eq: lam_plus},
		\begin{equation*}
			\begin{split}
				\lambda_{i}^+&=\rho(x_i^+-z^+)-g_i\\
				&\overset{\eqref{eq: gradient}}{=}\rho(x_i^+-z^+)-\rho(z-x_i^+)+\lambda_i\\
				&=\rho(2x_i^+-z^+-z) +\lambda_i\\\
				\Longleftrightarrow\quad&  x_i^+ = \frac{\lambda_i^+-\lambda_i}{2\rho}+\frac{z^++z}{2}. 
			\end{split}
		\end{equation*}
	\end{proof}

	\subsection{Global Convergence of Convex Case}\label{sec: global convergence}
	
	The following global convergence proof relies on the \emph{Lyapunov stability theory} in the control community. A relationship between the former and the latter has been clarified in Appendix \ref{App: Lya}. %Moreover, it is the reason that \cite{Boyd2011,shi2014linear, ling2015dlm} also prove the global convergence of ADMM by applying different Lyapunov functions.
	
	We assume that the sub-functions $f_i$s are closed, proper, and strictly convex. %We stress that the third condition also implies the uniqueness of $z^{*}$.
	For establishing the global convergence theory of Reduced Consensus ALADIN,		
	we introduce the following \emph{Lyapunov function} \cite{nonlinear} with the global minimizer $z^*$.
	%	\[\mathscr{L}_B(z,\lambda)=\frac{1}{\rho} \sum_{i=1}^{N}\|\lambda_i-\lambda_i^* \|^2 + \rho\|z-z^*\|_{\left(\sum_{i=1}^N  B_i \right)}^2.\]
	%	When we choose $B_i=\rho I$, it reduced to
	\begin{equation}
		\mathscr{L}(z,\lambda)=	\frac{1}{\rho} \sum_{i=1}^{N}\|\lambda_i-\lambda_i^* \|^2 + \rho N  \|z-z^*\|^2.
	\end{equation}
	Note that the choice of Lyapunov function is not unique. Next, we will prove the global convergence of Reduced Consensus ALADIN by showing that the Lyapunov function is monotonically decreasing.

	\begin{theorem}\label{The: 1}
		Suppose $f_i$s are strictly convex and problem~\eqref{eq: FL} has an existing solution $z^*$, then
		\begin{equation}
			\mathscr{L}(z,\lambda)-\mathscr{L}(z^+,\lambda^+)\geq \alpha\left(\|x_i^+-z^*\|\right)\geq0
		\end{equation}
		will hold by applying Reduced Consensus ALADIN.
		Here, $\alpha$ is a class $\mathcal K$ function \cite{nonlinear}.		
	\end{theorem}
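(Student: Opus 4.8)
The plan is to adapt the Lyapunov-descent argument used for Consensus ADMM in \cite{shi2014linear} to the Reduced Consensus ALADIN iteration, using only the stockpiled identities \eqref{eq: gradient}--\eqref{eq: quadratic equality} together with the lemma \eqref{eq: primal}. First I would expand the one-step difference $D:=\mathscr{L}(z,\lambda)-\mathscr{L}(z^+,\lambda^+)$ by applying the quadratic identity \eqref{eq: quadratic equality} twice, once with $(a,b,c)=(\lambda_i,\lambda_i^+,\lambda_i^*)$ and once with $(a,b,c)=(z,z^+,z^*)$. This rewrites $D$ as a sum of cross terms $\tfrac{2}{\rho}\sum_i(\lambda_i-\lambda_i^*)^\top(\lambda_i-\lambda_i^+)+2\rho N(z-z^*)^\top(z-z^+)$ minus the ``residual'' square terms $\tfrac1\rho\sum_i\|\lambda_i-\lambda_i^+\|^2+\rho N\|z-z^+\|^2$.

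Next I would eliminate $\lambda_i-\lambda_i^+$ and $z-z^+$ from the cross terms using the update relations: combining \eqref{eq: gradient} and \eqref{eq: lam_plus} gives $\lambda_i^+-\lambda_i=\rho(2x_i^+-z^+-z)$ (i.e.\ \eqref{eq: primal}), and combining \eqref{eq: sum_lam}, \eqref{eq: z_plus}, \eqref{eq: sum_x} gives $z^+-z=\tfrac2N\sum_i(x_i^+-z)$ and $\sum_i(x_i^+-z^*)=\tfrac{N}{2}\big((z^+-z^*)+(z-z^*)\big)$. Substituting these, together with the KKT/fixed-point identities at the solution --- $x_i^\ast=z^\ast$, $g_i^\ast=-\lambda_i^\ast$ (from \eqref{eq: gradient} evaluated at the optimum), and $\sum_i\lambda_i^\ast=0$ --- the cross terms should reorganize into (i) the monotonicity term $\sum_i(g_i-g_i^\ast)^\top(x_i^+-z^\ast)$, which is $\ge 0$ by convexity of $f_i$ since $g_i\in\partial f_i(x_i^+)$ and $g_i^\ast=-\lambda_i^\ast\in\partial f_i(z^\ast)$, plus (ii) nonnegative square terms that account for the residual square terms produced in the first step.

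The main obstacle is the bookkeeping in this cancellation: one must verify that the negative residual $-\tfrac1\rho\sum_i\|\lambda_i-\lambda_i^+\|^2-\rho N\|z-z^+\|^2$ is absorbed after substitution. Here I would use $\tfrac1\rho\|\lambda_i^+-\lambda_i\|^2=\rho\|2x_i^+-z^+-z\|^2$ and the aggregation identity \eqref{eq: sum_x} (which makes $\sum_i(2x_i^+-z^+-z)$ compatible with $z^+-z$), so that the cross-plus-residual combination completes the square and leaves a manifestly nonnegative quantity, roughly of the form $\sum_i(g_i-g_i^\ast)^\top(x_i^+-z^\ast)+\rho\sum_i\big\|x_i^+-\tfrac{z^++z}{2}\big\|^2\ \ge\ 0$. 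This is the step where a bad sign would kill the argument, and it is the part I would check most carefully against \cite{shi2014linear}.

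Finally, to upgrade ``$\ge 0$'' to ``$\ge\alpha(\|x_i^+-z^\ast\|)$'' with $\alpha$ of class $\mathcal K$, I would invoke \emph{strict} convexity: the monotonicity term $\sum_i(g_i-g_i^\ast)^\top(x_i^+-z^\ast)$ vanishes precisely when every $x_i^+=z^\ast$, hence it is a positive, nondecreasing function of the consensus residual $\sum_i\|x_i^+-z^\ast\|$, which I would package as the class-$\mathcal K$ lower bound $\alpha$. I expect the paper to assert the existence of such an $\alpha$ rather than exhibit it in closed form, which is enough to drive the Lyapunov/LaSalle convergence conclusion in the following subsection.
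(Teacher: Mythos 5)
Your proposal is essentially the paper's proof run in reverse: the paper derives the monotonicity inequality $\sum_i(g_i-g_i^*)^\top(x_i^+-z^*)\ge\alpha(\cdot)$ from the optimality of $x_i^+$ and $z^*$ for two auxiliary functions (strict convexity supplying the class-$\mathcal K$ gap), and then shows by the same expansion via \eqref{eq: primal}, \eqref{eq: sum_lam} and \eqref{eq: quadratic equality} that this term equals exactly $\tfrac14\left(\mathscr{L}(z,\lambda)-\mathscr{L}(z^+,\lambda^+)\right)$. The only bookkeeping difference is that the residual squares you expect to survive as a nonnegative remainder in fact cancel exactly (your $\rho\|x_i^+-\tfrac{z^++z}{2}\|^2$ equals $\tfrac{1}{4\rho}\|\lambda_i^+-\lambda_i\|^2$ and is consumed by the cosine-rule terms), so the relation is an identity up to the factor $4$ absorbed into $\alpha$, which does not affect your conclusion.
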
 
	\begin{proof}
		See Appendix \ref{APP: C}.
	\end{proof}
	
	According to Theorem~\ref{The: 1}, the global convergence of Reduced Consensus ALADIN can be established. In order to prove the convergence of sequence $(z, \lambda)$ to the global optimal solution pair $(z^*, \lambda^*)$, we need to show the uniqueness of it.
	
	\begin{theorem}\label{Them: uniqueness}
		We assume that Theorem~\ref{The: 1} holds, which yields
		\begin{equation}\left\{
			\begin{split}
				\lim_{k\rightarrow\infty} z^k &= z^*\\
				\lim_{k\rightarrow\infty} \lambda^k &= \lambda^*,\\
			\end{split}\right.
		\end{equation}
		where $k$ denotes the index of iterations.
	\end{theorem}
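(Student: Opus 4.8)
The plan is to use the monotone decrease established in Theorem~\ref{The: 1} as a summability estimate for $\alpha(\|x_i^{k+1}-z^*\|)$, and then to propagate the resulting convergence of the local optimizers to the dual and global iterates through the standing identities \eqref{eq: gradient} and \eqref{eq: sum_lam}. Since the whole sequence ends up converging directly, no separate compactness or subsequence extraction is needed.

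First I would note that $\mathscr{L}(z,\lambda)\ge 0$ by construction, and by Theorem~\ref{The: 1} the sequence $\{\mathscr{L}(z^k,\lambda^k)\}$ is non-increasing, hence convergent to some $\mathscr{L}^\infty\ge 0$. Telescoping the inequality of Theorem~\ref{The: 1} over $k=0,1,\dots$ gives $\sum_{k\ge 0}\alpha(\|x_i^{k+1}-z^*\|)\le \mathscr{L}(z^0,\lambda^0)-\mathscr{L}^\infty<\infty$, so $\alpha(\|x_i^{k+1}-z^*\|)\to 0$; because $\alpha$ is a class $\mathcal K$ function (strictly increasing, $\alpha(0)=0$, continuous), this forces $\|x_i^k-z^*\|\to 0$ for every $i$, i.e. each local optimizer converges to the global minimizer $z^*$.

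Next I would upgrade this to convergence of $\lambda^k$ and $z^k$. Since the local subproblem \eqref{eq: NLP} (equivalently, the $x_i^+$ update in \eqref{eq: RCA}) is solved exactly, its stationarity condition gives $g_i=\nabla f_i(x_i^+)$; combined with continuity of $\nabla f_i$ and $x_i^+\to z^*$ this yields $g_i^k\to\nabla f_i(z^*)=-\lambda_i^*$, the last equality being the KKT relation for problem~\eqref{eq: FL} (which also pins down $\lambda_i^*$ uniquely, and, using strict convexity of the $f_i$, $z^*$ uniquely). Rearranging identity~\eqref{eq: gradient} as $\lambda_i^k-\rho z^k=-\rho x_i^{+,k}-g_i^k$, the right-hand side converges to $-\rho z^*+\lambda_i^*$, so $\lambda_i^k-\rho z^k\to\lambda_i^*-\rho z^*$ for each $i$. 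Summing over $i$ and using $\sum_i\lambda_i^k=0$ from~\eqref{eq: sum_lam} together with $\sum_i\lambda_i^*=0$, the dual terms cancel and one is left with $-\rho N z^k\to-\rho N z^*$, i.e. $z^k\to z^*$; substituting this back into $\lambda_i^k=(\lambda_i^k-\rho z^k)+\rho z^k$ gives $\lambda_i^k\to\lambda_i^*$, which is the claim.

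The telescoping and the cancellation in the sum are routine. The step that needs the most care is passing from $\alpha(\|x_i^{k+1}-z^*\|)\to 0$ to $\|x_i^k-z^*\|\to 0$ (one must state explicitly which properties of class $\mathcal K$ functions are invoked, and note $\mathscr{L}(z^0,\lambda^0)<\infty$ from a finite initialization), together with the identification $g_i^k\to-\lambda_i^*$, which quietly relies on exactness of the local solve, continuity of $\nabla f_i$, and the KKT characterization of the unique primal--dual optimum. If one wished to allow nonsmooth strictly convex $f_i$, this identification would have to be replaced by a subdifferential closedness / limit-point argument, which I regard as the main obstacle to a fully general statement.
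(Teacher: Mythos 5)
Your argument is correct, but it takes a genuinely different route from the paper's. The paper proves Theorem~\ref{Them: uniqueness} by a Fej\'er/Opial-type two-cluster-point argument: it supposes the sequence has two limit points $(z^\alpha,\lambda^\alpha)$ and $(z^\beta,\lambda^\beta)$, observes that the Lyapunov functions centered at each are monotone and hence convergent, evaluates the difference $\mathscr L^\alpha-\mathscr L^\beta$ along the two subsequences, and concludes $\rho N\|z^\alpha-z^\beta\|^2+\tfrac1\rho\sum_i\|\lambda_i^\alpha-\lambda_i^\beta\|^2=0$, i.e.\ uniqueness of the cluster point. You instead telescope the descent inequality of Theorem~\ref{The: 1} (in fact $\mathscr L^k-\mathscr L^{k+1}\to0$, which follows from monotonicity plus boundedness below, already suffices without invoking summability) to obtain $x_i^k\to z^*$, and then propagate this through the stationarity of the local subproblem, the identity \eqref{eq: gradient}, and the consensus condition \eqref{eq: sum_lam} to recover $z^k\to z^*$ and $\lambda_i^k\to\lambda_i^*$. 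The trade-offs are real: your route needs differentiability of the $f_i$ and continuity of $\nabla f_i$ to identify $g_i^k\to\nabla f_i(z^*)=-\lambda_i^*$ (a restriction you correctly flag, and one the paper itself makes implicitly in Appendix~\ref{APP: E}), whereas the paper's two-point argument works for merely strictly convex $f_i$. On the other hand, the paper's argument only establishes that the cluster point is \emph{unique} and must append the unargued remark that the optimum is reached ``if $\mathscr L(z,\lambda)=0$ is touched''; your argument directly identifies the limit with the KKT pair $(z^*,\lambda^*)$ and therefore proves the stated conclusion in full, effectively closing a gap the paper's own proof leaves open. Your per-agent reading of $\alpha(\|x_i^+-z^*\|)\to0$ is legitimate because, in the paper's construction, this quantity is a sum of nonnegative class-$\mathcal K$ terms $\tilde\alpha_i(\|x_i^+-z^*\|)$, each of which must then vanish.
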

	\begin{proof}
		See Appendix \ref{APP: D}.
	\end{proof}
	
	The above two theorems show the convergence of $\left( z, \lambda\right)$. 
	Later, we will show  $x_i$  is also convergent in the following theorem.%, has been made.    
	% for $x_i$ is also convergent to discussed.
	\begin{theorem}\label{them: local model}
		If Theorem~\ref{The: 1} holds, then we have $x_i\rightarrow z^*$. 
	\end{theorem}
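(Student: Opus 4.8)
The plan is to read the result off directly from the primal representation \eqref{eq: primal} together with the convergence of the pair $(z,\lambda)$ already supplied by Theorems~\ref{The: 1} and \ref{Them: uniqueness}. Recall that \eqref{eq: primal} gives, at every iteration $k$,
\[
x_i^{k+1} = \frac{\lambda_i^{k+1}-\lambda_i^{k}}{2\rho} + \frac{z^{k+1}+z^{k}}{2},
\]
so it suffices to control the two summands on the right-hand side as $k\to\infty$.

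First I would show that $\lambda_i^{k+1}-\lambda_i^{k}\to 0$ for each fixed $i$. By Theorem~\ref{Them: uniqueness} the whole sequence $\lambda^k$ converges to $\lambda^*$; since the Lyapunov function dominates $\tfrac1\rho\sum_{j=1}^N \|\lambda_j - \lambda_j^*\|^2$, the componentwise convergence $\|\lambda_i^k-\lambda_i^*\|\to 0$ follows, and hence $\|\lambda_i^{k+1}-\lambda_i^k\|\le \|\lambda_i^{k+1}-\lambda_i^*\|+\|\lambda_i^*-\lambda_i^k\|\to 0$. Consequently $\tfrac{1}{2\rho}(\lambda_i^{k+1}-\lambda_i^{k})\to 0$.

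Second, again by Theorem~\ref{Them: uniqueness}, $z^k\to z^*$, and since the limit of a convergent sequence is shared by its shift, also $z^{k+1}\to z^*$; therefore $\tfrac12(z^{k+1}+z^k)\to z^*$. Combining the two limits in the displayed identity yields $x_i^{k+1}\to z^*$, i.e. $x_i\to z^*$, which is the claim. (As a consistency check, strict convexity forces the consensus constraint $x_i^*=z^*$ to hold at optimality, so $z^*$ is indeed the right target.)

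I expect the only care needed to be bookkeeping rather than any genuine obstacle: one must confirm that the representation \eqref{eq: primal}, derived from the update relations \eqref{eq: gradient}--\eqref{eq: z_plus}, is exactly the iteration run by Reduced Consensus ALADIN, and that the convergence extracted from the Lyapunov/monotonicity argument is convergence of the full sequence (not merely along a subsequence), so that $z^k$ and $z^{k+1}$ legitimately share the limit $z^*$. Once those points are in place the argument reduces to a two-line passage to the limit, the substantive work having already been carried out in Theorems~\ref{The: 1} and \ref{Them: uniqueness}.
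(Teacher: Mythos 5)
Your argument is correct, but it takes a genuinely different route from the paper's. The paper proves this theorem analytically: it writes the first-order optimality conditions of the local subproblem \eqref{eq: consensus ALADIN-step1} at $x_i^+$ and at $z^*$, subtracts them, and invokes a second-order sufficient condition $\nabla^2 f_i(x_i^+)+\rho I\succeq\sigma I$ to obtain the quantitative bound $\|x_i^+-z^*\|\le \frac{\rho}{\sigma}\|z-z^*\|+\frac{1}{\sigma}\|\lambda_i-\lambda_i^*\|$ (Equation \eqref{eq: con_ALADIN_step1}), after which convergence of $(z,\lambda)$ from Theorems~\ref{The: 1} and \ref{Them: uniqueness} finishes the job. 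You instead exploit the purely algebraic identity \eqref{eq: primal}, $x_i^+=\frac{\lambda_i^+-\lambda_i}{2\rho}+\frac{z^++z}{2}$, and pass to the limit term by term. Your route is more elementary and arguably cleaner for the stated claim: it needs no differentiability of $f_i$, no curvature constant $\sigma$, and no assumption that $x_i^0$ starts near $z^*$ (a hypothesis the paper's proof quietly inserts even though the theorem is advertised as global); it only requires that \eqref{eq: primal} faithfully encodes the Reduced Consensus ALADIN updates \eqref{eq: RCA}, which Lemma~1 verifies, and that Theorem~\ref{Them: uniqueness} gives full-sequence (not subsequential) convergence, which it does as stated. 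What the paper's approach buys in exchange is the explicit error inequality \eqref{eq: con_ALADIN_step1}, which is not merely a stepping stone here but is reused verbatim in the local non-convex convergence analysis of Appendix~\ref{APP: H}; your limit argument yields only the qualitative conclusion. Both proofs rely on Theorem~\ref{Them: uniqueness} in addition to Theorem~\ref{The: 1} despite the theorem statement citing only the latter, so that is not a defect specific to your write-up.
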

	\begin{proof}
		See Appendix \ref{APP: E}.
	\end{proof}

	% \newpage 
	From the above three theorems, a convergence of the sub-gradients of the agents can be also easily established.
	\begin{theorem}\label{Them: gradient converge}
		We assume that Theorem~\ref{The: 1}, \ref{Them: uniqueness}, and \ref{them: local model} hold jointly. Then, $g_i$ converges to $-\lambda_i^*$ globally. 
	\end{theorem}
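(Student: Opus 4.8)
The plan is to read the conclusion directly off the gradient-decoding identity \eqref{eq: gradient} together with the three convergence statements already in hand. Recall that \eqref{eq: gradient} reads $g_i = \rho(z - x_i^+) - \lambda_i$, where $x_i^+$ is simply the local primal iterate $x_i$ produced at the next round. Thus the entire argument reduces to passing to the limit on the right-hand side of \eqref{eq: gradient}.

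First I would fix an agent $i$ and index the iterates by $k$. By Theorem~\ref{them: local model} the local primal sequence satisfies $x_i^k \to z^*$, and since $x_i^+$ is just $x_i$ evaluated one iteration later, the shifted sequence $x_i^{+,k}$ also converges to $z^*$. By Theorem~\ref{Them: uniqueness}, $z^k \to z^*$ and $\lambda_i^k \to \lambda_i^*$. Since the penalty $\rho>0$ is fixed and the map $(z,x,\lambda)\mapsto \rho(z-x)-\lambda$ is affine, hence continuous, we obtain
\begin{equation*}
	g_i^k = \rho\bigl(z^k - x_i^{+,k}\bigr) - \lambda_i^k \;\longrightarrow\; \rho(z^*-z^*) - \lambda_i^* = -\lambda_i^*,
\end{equation*}
which is exactly the claim, and it holds from any initialization, i.e.\ globally.

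For completeness I would also check that this limit is consistent with first-order optimality of Problem~\eqref{eq: FL}: the optimality condition of the local step \eqref{eq: NLP} gives $g_i \in \partial f_i(x_i^+)$ at every iteration, and since each $f_i$ is convex (so its subdifferential has closed graph), letting $x_i^+ \to z^*$ and $g_i \to -\lambda_i^*$ yields $-\lambda_i^* \in \partial f_i(z^*)$; combined with $\sum_{i=1}^N \lambda_i^* = 0$ from \eqref{eq: sum_lam}, this recovers precisely the KKT system of \eqref{eq: FL} at $(z^*,\lambda^*)$, which confirms internal consistency of the statement.

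There is no genuine obstacle here: the theorem is essentially a corollary of Theorems~\ref{The: 1}--\ref{them: local model}. The only point requiring a word of care is the bookkeeping between the ``plus'' (next-iteration) sequences and the ``current'' sequences — one must note that a convergent sequence and any of its shifts share the same limit — together with the observation that all the convergences in Theorems~\ref{Them: uniqueness} and~\ref{them: local model} take place in the same norm, so that the continuity argument applies verbatim.
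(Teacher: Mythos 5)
Your argument is correct and follows essentially the same route as the paper's Appendix~F proof: both pass to the limit in the decoding identity $g_i=\rho(z-x_i^+)-\lambda_i$ using the convergence of $z$, $x_i^+$, and $\lambda_i$ established in the preceding theorems (the paper simply makes the continuity step explicit via a triangle-inequality bound $\sum_i\|g_i-g_i^*\|\le \rho N\|z-z^*\|+\sum_i(\rho\|x_i^+-z^*\|+\|\lambda_i-\lambda_i^*\|)$). Your added KKT consistency check is a harmless bonus not present in the paper.
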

	\begin{proof}
		See Appendix \ref{APP: F}.
	\end{proof}

	Note that the global convergence proof only requires the strict convexity of objectives without smoothness and strongly convexity assumptions. 
	
	C-ALADIN, same as T-ALADIN, in case $ f_i(x_i)$s are only convex rather than strictly convex, guarantees that the solutions can converge to an optimal set instead of a single optimal solution, which will be theoretically analyzed in Theorem~\ref{theorem 2 }.

	% also has no guarantee for converging to a single optimal solution but

	% an optimal set if  $\sum_{i=1}^{N} f_i(x_i)$ is convex without strictly convex assumption.
	
	\begin{theorem}\label{theorem 2 }
		Suppose that $\mathbb Z^*$ denotes the set of optimal primal solution and $\mathbf\Lambda_i^*$ represents the optimal dual set,  with $\lambda_i^*\in \mathbf\Lambda_i^*$, we have
		$z$ converge to $\mathbb Z^*$ globally,
		\[\lim_{z^*\in \mathbb Z^*}\left\|z -z^* \right\|=0.\] 
	\end{theorem}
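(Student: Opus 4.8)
The plan is to reuse the Lyapunov machinery behind Theorem~\ref{The: 1}, but now anchored at an \emph{arbitrary} optimal pair rather than a unique one, and then to upgrade subsequential convergence to full convergence. First I would fix any pair $(z^*,\lambda^*)$ with $z^*\in\mathbb Z^*$ and $\lambda_i^*\in\mathbf\Lambda_i^*$. Since each $f_i$ being merely convex still yields the KKT relations $g_i^*=-\lambda_i^*$, $\sum_{i=1}^N\lambda_i^*=0$, and $x_i^*=z^*$, every algebraic identity used in Appendix~\ref{APP: C} (in particular Equations~\eqref{eq: gradient}--\eqref{eq: sum_x} and \eqref{eq: primal}) remains valid. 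Re-running that computation gives $\mathscr L(z,\lambda)-\mathscr L(z^+,\lambda^+)\ge 0$, the right-hand side now being a sum of nonnegative terms (the strict-convexity-based class-$\mathcal K$ lower bound is simply dropped). Hence $\{\mathscr L(z^k,\lambda^k)\}_k$ is nonincreasing and bounded below, so it converges; in particular the iterates $\{(z^k,\lambda^k)\}_k$ stay in a bounded set, and the per-step decrements are summable and therefore tend to zero.

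Next I would extract, by Bolzano--Weierstrass, a subsequence $(z^{k_j},\lambda^{k_j})\to(\bar z,\bar\lambda)$. Because the decrements vanish, each nonnegative term composing them vanishes along the sequence; these terms control exactly the optimality residuals of Problem~\eqref{eq: FL} --- the consensus gaps $x_i^+-z$, the drift $z^+-z$, and hence $\|\lambda_i^+-\lambda_i\|$ via Equation~\eqref{eq: primal}. It follows that $z^{k_j+1}\to\bar z$, $\lambda^{k_j+1}\to\bar\lambda$, and $x_i^{+,k_j}\to\bar z$. Passing to the limit in $g_i=\rho(z-x_i^+)-\lambda_i$ together with closedness of the (Clarke $=$ convex) subdifferential $\partial f_i$ shows $-\bar\lambda_i\in\partial f_i(\bar z)$, while $\sum_{i=1}^N\bar\lambda_i=0$; thus $0\in\sum_{i=1}^N\partial f_i(\bar z)$, i.e.\ $\bar z\in\mathbb Z^*$ and $\bar\lambda_i\in\mathbf\Lambda_i^*$.

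Finally I would run the Lyapunov argument a second time, anchored at this specific limit point $(\bar z,\bar\lambda)$: the function $\overline{\mathscr L}(z,\lambda)=\frac1\rho\sum_{i=1}^N\|\lambda_i-\bar\lambda_i\|^2+\rho N\|z-\bar z\|^2$ is again nonincreasing along the iterates, while by construction $\overline{\mathscr L}(z^{k_j},\lambda^{k_j})\to 0$. A nonincreasing nonnegative sequence possessing a null subsequence is itself null, so $\overline{\mathscr L}(z^k,\lambda^k)\to 0$, i.e.\ $z^k\to\bar z$ and $\lambda^k\to\bar\lambda$. Consequently $\mathrm{dist}(z^k,\mathbb Z^*)\le\|z^k-\bar z\|\to 0$, which is the assertion. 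The main obstacle is the middle step: showing that vanishing Lyapunov decrements genuinely force the subsequential limit to be a KKT point. In the merely convex case no individual term is coercive toward optimality, so one needs precise bookkeeping of exactly which nonnegative quantities appear in the Appendix~\ref{APP: C} estimate together with an outer-semicontinuity argument for $\partial f_i$.
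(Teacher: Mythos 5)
Your overall route is the same one the paper takes -- re-run the Lyapunov computation of Theorem~\ref{The: 1} anchored at an arbitrary optimal pair $(z^*,\lambda^*)$ and use monotonicity of $\mathscr L$ -- and you go further than the paper does: the paper's proof stops at ``the monotone decreasing of the Lyapunov function still holds,'' whereas you add the standard Opial-type finish (extract a convergent subsequence, identify its limit as a KKT point, re-anchor $\mathscr L$ at that limit, and use that a nonincreasing nonnegative sequence with a null subsequence is null). That final re-anchoring step is correct and is exactly what is needed to turn Fej\'er-type monotonicity into convergence of the whole sequence; it is a genuine improvement over the two-sentence argument in the paper.

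The gap is the one you flag yourself, and it is real, not cosmetic. In the decomposition of Appendix~\ref{APP: C}, the cross terms $\|\lambda_i-\lambda_i^+\|^2$ from Part~1 and Part~2 cancel exactly, and Parts~3--5 vanish by \eqref{eq: sum_lam}; what survives is precisely $\tfrac14\bigl(\mathscr L(z^+,\lambda^+)-\mathscr L(z,\lambda)\bigr)$ with \emph{no leftover quadratic residual}. Hence the per-step decrement equals $-4\sum_{i}\bigl(\lambda_i-\lambda_i^*+\rho(x_i^+-z)\bigr)^\top(x_i^+-z^*)$, which by the two convexity inequalities built from \eqref{eq: aux} is a sum of two Bregman-type objective gaps, $\tilde f_i(z^*)-\tilde f_i(x_i^+)$ and $G(x^+)-G(z^*)$. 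For merely convex $f_i$ these gaps vanishing does \emph{not} control $\|x_i^+-z\|$, $\|z^+-z\|$, or $\|\lambda_i^+-\lambda_i\|$, so your assertion that ``each nonnegative term composing them \dots control exactly the optimality residuals of Problem~\eqref{eq: FL}'' is not supported by the paper's algebra. Without those residuals going to zero you cannot pass to the limit in $g_i=\rho(z-x_i^+)-\lambda_i$ to conclude $-\bar\lambda_i\in\partial f_i(\bar z)$, and the identification of the cluster point as optimal -- the hinge of the whole argument -- is left open. To close it you would need either an additional estimate showing the primal and dual residuals are square-summable (as in the standard Consensus ADMM analysis, where such terms do survive in the Lyapunov decrease), or a coercivity/level-boundedness assumption letting you pass from vanishing objective gaps to vanishing distance to the argmin sets. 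As it stands, both your proposal and the paper's own proof leave this step unproved; yours at least makes the missing lemma explicit.
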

	\begin{proof}
		Here the second auxiliary function in Equation~\eqref{eq: aux} will be reduced to
		\begin{equation}
			G(\xi)=\sum_{i=1}^{N}f_i(\xi_i)+\sum_{i=1}^{N}(\xi_i-z^*)^\top\lambda_i^*
		\end{equation}
		with $\lambda_i^*\in \mathbf\Lambda_i^*$ that $\mathbf\Lambda_i^*$ is the optimal set of each dual variables which means that they are not unique. In this way, Theorem~\ref{Them: uniqueness} is not used in this case.
		
		However, if we apply the proof of Theorem~\ref{The: 1} again here, the monotone decreasing of the Lyapunov function still holds. In this case, we have a similar result as \cite{Houska2021}.
	\end{proof}
	
	Note that, no global convergence rate of convex optimization has been discussed in the T-ALADIN research \cite{Houska2021}. 
	As an additional contribution compared with the former, in the next subsection we will find the convergence rate 
	of convex cases by using Reduced Consensus ALADIN with some extra technical assumptions.

	\subsection{Global Linear Convergence Rate Analysis}\label{sec: convex rate}
	
	Next we will prove the Q-linear convergence rate \cite{Nocedal2006} by adding additional $m_f$ strongly convex or $\omega_f$ smooth assumptions.
	\begin{theorem}\label{theorem 3 }
		Suppose that $\sum_{i=1}^{N} f_i(x_i)$ is $m_f$ strongly convex, and there exists a $\delta>0$ such that
		\begin{equation}
			\delta \mathscr{L}(z^+,\lambda^+)\leq 4 m_f \sum_{i=1}^{N}\left\| x_i^+ -z^*\right\|^2,
		\end{equation}
		then C-ALADIN is Q-linearly converging to a unique optimal solution with rate $\left( \frac{1}{\sqrt{1+\delta}} \right)$.
	\end{theorem}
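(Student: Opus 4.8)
The plan is to show that the hypothesis on $\delta$ upgrades the merely nonnegative descent of Theorem~\ref{The: 1} into a genuine contraction of the Lyapunov function $\mathscr{L}$, and then to read off the Q-linear rate from the fact that $\mathscr{L}^{1/2}$ behaves like a norm on the error pair $(z-z^*,\lambda-\lambda^*)$.

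First I would revisit the proof of Theorem~\ref{The: 1} in Appendix~\ref{APP: C} and specialize it to the $m_f$-strongly convex case. The class-$\mathcal K$ lower bound $\alpha(\|x_i^+-z^*\|)$ obtained there comes from the monotonicity of the (sub)differential of the $f_i$; under $m_f$-strong convexity of $\sum_{i=1}^N f_i$ this monotonicity sharpens to the strong-monotonicity estimate $\sum_{i=1}^N\bigl(g_i(x_i^+)-g_i^*\bigr)^\top(x_i^+-z^*)\ge m_f\sum_{i=1}^N\|x_i^+-z^*\|^2$ (using $g_i(x_i^+)=\nabla f_i(x_i^+)$ and $g_i^*=-\lambda_i^*=\nabla f_i(z^*)$). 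Tracking this constant through the same chain of inequalities used for Theorem~\ref{The: 1} — i.e.\ keeping the averaging identities \eqref{eq: sum_x} and the quadratic identity \eqref{eq: quadratic equality} intact — yields the refined descent
\[
\mathscr{L}(z,\lambda)-\mathscr{L}(z^+,\lambda^+)\ \ge\ 4 m_f\sum_{i=1}^{N}\bigl\|x_i^+-z^*\bigr\|^2 ,
\]
so that in the strongly convex case one may take $\alpha(t)=4 m_f t^2$, aggregated over the agents.

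Next I would combine this with the standing hypothesis $\delta\,\mathscr{L}(z^+,\lambda^+)\le 4 m_f\sum_{i=1}^N\|x_i^+-z^*\|^2$. Chaining the two inequalities gives
\[
\mathscr{L}(z,\lambda)-\mathscr{L}(z^+,\lambda^+)\ \ge\ \delta\,\mathscr{L}(z^+,\lambda^+),\qquad\text{hence}\qquad \mathscr{L}(z^+,\lambda^+)\ \le\ \frac{1}{1+\delta}\,\mathscr{L}(z,\lambda).
\]
Iterating over the round index $k$ yields $\mathscr{L}(z^k,\lambda^k)\le(1+\delta)^{-k}\mathscr{L}(z^0,\lambda^0)\to0$. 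Since $\mathscr{L}(z,\lambda)=\frac{1}{\rho}\sum_{i=1}^N\|\lambda_i-\lambda_i^*\|^2+\rho N\|z-z^*\|^2$ is a positive-definite quadratic in the error, $\mathscr{L}^{1/2}$ is a norm on $(z-z^*,\lambda-\lambda^*)$, and taking square roots gives
\[
\sqrt{\mathscr{L}(z^+,\lambda^+)}\ \le\ \frac{1}{\sqrt{1+\delta}}\,\sqrt{\mathscr{L}(z,\lambda)},
\]
which is exactly Q-linear convergence with rate $1/\sqrt{1+\delta}\in(0,1)$; in particular $\|z^k-z^*\|$ and each $\|\lambda_i^k-\lambda_i^*\|$ contract linearly, and $x_i^k\to z^*$ linearly via Equation~\eqref{eq: primal}. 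Finally, $m_f$-strong convexity of $\sum_{i=1}^N f_i$ forces strict convexity, so the optimal pair $(z^*,\lambda^*)$ is unique by Theorem~\ref{Them: uniqueness}, and the limit found above is that unique optimizer.

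The main obstacle is the first step: recovering the explicit constant $4 m_f$ from the proof of Theorem~\ref{The: 1}. One must apply the strong-monotonicity bound to the aggregate gradient error in precisely the place where Theorem~\ref{The: 1} used only plain monotonicity, and verify that none of the cross terms arising from the dual update and the consensus identities \eqref{eq: sum_lam}--\eqref{eq: quadratic equality} are dropped or double-counted; the remainder is a one-line rearrangement. A secondary subtlety worth flagging is that the hypothesis is phrased in terms of the post-update quantities $z^+,\lambda^+$, which is why the contraction factor $\tfrac{1}{1+\delta}$ (rather than $1-\delta$) multiplies $\mathscr{L}(z^+,\lambda^+)$.
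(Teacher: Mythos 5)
Your proposal follows essentially the same route as the paper's proof in Appendix~\ref{APP: G}: both establish the key identity $\sum_{i=1}^{N}(x_i^+-z^*)^\top(g_i-g_i^*)=\tfrac{1}{4}\mathscr{L}(z,\lambda)-\tfrac{1}{4}\mathscr{L}(z^+,\lambda^+)$ by reusing the substitutions \eqref{eq: gradient}, \eqref{eq: primal}, \eqref{eq: sum_lam} and \eqref{eq: quadratic equality} from the proof of Theorem~\ref{The: 1}, apply strong monotonicity to lower-bound the left side by $m_f\sum_i\|x_i^+-z^*\|^2$, and then chain with the hypothesis to obtain $\mathscr{L}(z^+,\lambda^+)\le\frac{1}{1+\delta}\mathscr{L}(z,\lambda)$ and the rate $1/\sqrt{1+\delta}$. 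The argument is correct and matches the paper's.
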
 
	\begin{proof}
		See Appendix \ref{APP: G}.
	\end{proof}
	
	In the Reduced Consensus ALADIN, we find the condition of $m_f$ strongly convex and $\omega_f$ smooth are symmetric. Q-linearly convergent results can also be established later.
	\begin{corollary}\label{corollary}
		Suppose that $\sum_{i=1}^{N} f_i(x_i)$ is $\omega_f$ smooth and convex, we have a similar result as Theorem \ref{theorem 3 },  there exists a $\delta>0$ such that
		\begin{equation}\label{eq: strongly convex}
			\delta \mathscr{L}(z^+,\lambda^+)\leq \frac{4 }{\omega_f}\sum_{i=1}^{N}\left\| g_i -g_i^*\right\|^2,
		\end{equation}
		then Reduced Consensus ALADIN can also Q-linearly converge to a unique optimal solution with a rate $\left( \frac{1}{\sqrt{1+\delta}} \right)$. 	
	\end{corollary}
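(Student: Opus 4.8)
The plan is to run the argument of Theorem~\ref{theorem 3 } almost verbatim, replacing the use of strong convexity by co-coercivity, and exploiting the primal--dual symmetry of Reduced Consensus ALADIN noted just before the statement: the gradient evaluation \eqref{eq: gradient} and the dual update \eqref{eq: lam_plus} become the same relation after interchanging $(x_i^+,z)$ with $(\lambda_i,g_i)$. Concretely, the same computation as in Appendix~\ref{APP: C} (combining \eqref{eq: gradient}--\eqref{eq: primal} with the quadratic identity \eqref{eq: quadratic equality}), already invoked in Appendix~\ref{APP: G}, gives the per-iteration descent inequality
\[
\mathscr{L}(z,\lambda)-\mathscr{L}(z^+,\lambda^+)\;\ge\;4\sum_{i=1}^{N}(g_i-g_i^*)^\top(x_i^+-z^*),
\]
where $g_i^*=-\lambda_i^*=\nabla f_i(z^*)$ is the limiting (sub)gradient from Theorem~\ref{Them: gradient converge}; by convexity of each $f_i$ the right-hand side is already nonnegative, which is what Theorem~\ref{theorem 2 } uses.

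Where the proof of Theorem~\ref{theorem 3 } now lower-bounds each summand by $m_f\|x_i^+-z^*\|^2$, I would instead use that $\sum_{i=1}^{N}f_i$ is convex and $\omega_f$-smooth on the product space $\mathbb R^{Nn}$, so its gradient is $1/\omega_f$-co-coercive (Baillon--Haddad). Evaluating that inequality at $(x_1^+,\dots,x_N^+)$ and $(z^*,\dots,z^*)$ yields
\[
\sum_{i=1}^{N}(g_i-g_i^*)^\top(x_i^+-z^*)\;\ge\;\frac{1}{\omega_f}\sum_{i=1}^{N}\|g_i-g_i^*\|^2 .
\]
Chaining the two displays gives $\mathscr{L}(z,\lambda)-\mathscr{L}(z^+,\lambda^+)\ge \frac{4}{\omega_f}\sum_{i}\|g_i-g_i^*\|^2$, and then the hypothesis \eqref{eq: strongly convex} turns this into $\mathscr{L}(z,\lambda)-\mathscr{L}(z^+,\lambda^+)\ge \delta\,\mathscr{L}(z^+,\lambda^+)$, i.e. $\mathscr{L}(z^+,\lambda^+)\le \tfrac{1}{1+\delta}\,\mathscr{L}(z,\lambda)$.

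Since $\mathscr{L}$ is a weighted sum of squared norms, $\sqrt{\mathscr{L}(\cdot)}$ measures the distance of $(z,\lambda)$ to $(z^*,\lambda^*)$, and the contraction just obtained says this distance shrinks by the factor $1/\sqrt{1+\delta}<1$ at each iteration; that is exactly Q-linear convergence with rate $1/\sqrt{1+\delta}$, and it forces $(z^k,\lambda^k)\to(z^*,\lambda^*)$ as in Theorems~\ref{Them: uniqueness}--\ref{them: local model}. Uniqueness of the limit then follows for free: the same contraction holds for $\mathscr{L}$ centered at any KKT pair, so the iterates cannot converge to two distinct optima. The only delicate point -- and the reason the symmetry remark is placed before the corollary -- is re-deriving the descent inequality in the $(g_i-g_i^*)^\top(x_i^+-z^*)$ form rather than the $\|x_i^+-z^*\|^2$ form used for Theorem~\ref{theorem 3 }; once that is available (by the same algebra with the primal and dual roles swapped), the co-coercivity substitution closes the argument.
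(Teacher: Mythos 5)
Your proposal follows essentially the same route as the paper: both reduce the corollary to the key inequality $\frac{1}{\omega_f}\sum_{i=1}^{N}\|g_i-g_i^*\|^2\leq\sum_{i=1}^{N}(x_i^+-z^*)^\top(g_i-g_i^*)$, identify the right-hand side with $\frac{1}{4}\mathscr{L}(z,\lambda)-\frac{1}{4}\mathscr{L}(z^+,\lambda^+)$ via the nine-part computation already done for Theorem~\ref{theorem 3 }, and then chain in hypothesis \eqref{eq: strongly convex} to obtain the contraction $\mathscr{L}(z^+,\lambda^+)\leq\frac{1}{1+\delta}\mathscr{L}(z,\lambda)$. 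If anything, your explicit appeal to convexity plus $\omega_f$-smoothness (Baillon--Haddad co-coercivity) justifies that key inequality more carefully than the paper, which attributes it to Lipschitz continuity alone.
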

	\begin{proof}
		From the definition of $\omega_f$ Lipschitz continuous \cite{Nocedal2006}
		\begin{equation}
			\|g_i -g_i^*\|\leq\omega_f\|x_i^+-z^*\|,
		\end{equation}
		the following inequality can be obtained
		\begin{equation}
			\begin{split}
				\frac{1}{\omega_f} \sum_{i=1}^{N}\|g_i -g_i^* \|^2\leq  \sum_{i=1}^{N} \left(x_i^+-z^* \right)^\top \left( g_i -g_i^* \right).
			\end{split}
		\end{equation}
		Note that the right-hand side is the same equation as that of Equation \eqref{eq: linear rate}.
		Such that, the later proof is similar to Theorem \ref{theorem 3 } and is not shown here.
	\end{proof}
	
	Therefore, Reduced Consensus ALADIN needs either $m_f$ strongly convex or $\omega_f$ Lipschitz continuous to establish the global Q-linear convergence theory. 
	
	In the above two sections, we show the global convergence of Reduced Consensus ALADIN has a similar property as ADMM.
	But different from the latter, in the next section we will show local convergence analysis of non-convex cases of C-ALADIN.
	
	\subsection{Local Convergence Analysis of Non-convex Case}\label{sec: local convergence}
	The following convergence analysis depends on the assumption that $f_i$s are twice continuously differentiable in a neighborhood of a local minimizer $z^*$.
	To benefit from the theory of SQP \cite[Chapter 18]{Nocedal2006}, in the corresponding convergence analysis of C-ALADIN, we introduce $\gamma$ as the upper bound of Hessian approximation difference $\left\|B_i-\nabla^2f_i(x_i)\right\|\leq \gamma$.

	% The corresponding convergence analysis of C-ALADIN fully inherits from SQP theory \cite[Chapter 18]{Nocedal2006} by introducing $\gamma$ as the upper bound of Hessian approximation difference $\left\|B_i-\nabla^2f_i(x_i)\right\|\leq \gamma$.% (Appendix \ref{appendix}).
	
	\begin{theorem}\label{them: local convergence}
		If $f_i$s are non-convex, C-ALADIN can still converge with sufficient large $\rho$ with different convergence rate in different situations of $\gamma$.
	\end{theorem}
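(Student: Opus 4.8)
The plan is to mimic the classical local convergence proof for ALADIN (Houska et al.\ 2016, and the SQP framework of \cite[Chapter~18]{Nocedal2006}) but adapted to the consensus structure with the reduced global update \eqref{eq: reduced QP}. First I would set up the notation: let $z^*$ be a local minimizer of Problem~\eqref{eq: FL} at which LICQ and SOSC hold, let $\lambda_i^*$ be the associated multipliers, and collect the primal--dual iterate into $v=(x_1^+,\dots,x_N^+,\lambda_1,\dots,\lambda_N,z)$. The key observation is that one full C-ALADIN step is a composition of (i) the decoupled proximal steps \eqref{eq: consensus ALADIN-step1}, which, when $f_i$ is $C^2$, are well-defined contractions toward a perturbed stationary point for $\rho$ large, and (ii) the consensus-QP coordination step whose KKT system \eqref{eq: KKT} is exactly a (quasi-)Newton step on the coupled stationarity/feasibility residual. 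So I would define the residual map $F(v)$ whose zero is the KKT point of \eqref{eq: FL}, and show that the C-ALADIN iteration is $v^+ = v - \widetilde{M}^{-1}F(v) + \text{(error)}$, where $\widetilde{M}$ agrees with the true KKT Jacobian up to the Hessian approximation error governed by $\gamma$ and up to terms of order $1/\rho$.

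Next I would carry out the contraction estimate in three pieces. Step~1: bound the local NLP error — show that the minimizer $x_i^+$ of \eqref{eq: NLP} satisfies $\|x_i^+ - z\| = O(1/\rho)\cdot\|(\text{current residual})\|$ plus higher-order terms, using strong convexity of the augmented subproblem for $\rho$ exceeding the most negative eigenvalue of $\nabla^2 f_i$ on the neighborhood, together with the implicit function theorem. Step~2: plug these into the reduced update \eqref{eq: reduced QP} / \eqref{eq: z_plus} and the dual recursion \eqref{eq: lam_plus}, and use the identities \eqref{eq: gradient}--\eqref{eq: sum_x} already collected in the excerpt to write $z^+ - z^*$ and $\lambda_i^+ - \lambda_i^*$ as a linear map applied to $z-z^*$, $\lambda_i-\lambda_i^*$, plus a remainder that is quadratic in the iterate error (from Taylor-expanding $\nabla f_i$) plus a term proportional to $\gamma$. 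Step~3: estimate the spectral radius of that linear map; invoking SOSC and LICQ exactly as in the SQP convergence theorem, the ``exact-Hessian, $\rho=\infty$'' limit gives spectral radius $0$ (the pure Newton/SQP step), so by continuity there exist $\bar\rho$ and $\bar\gamma$ such that for $\rho\ge\bar\rho$ and $\gamma\le\bar\gamma$ the map is a contraction with factor $\kappa<1$; the resulting convergence is linear in general and superlinear if additionally $\gamma\to 0$ along the iterations (Dennis--Moré-type condition), matching the ``local linear or super-linear'' claim advertised in the abstract.

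For the write-up I would state the contraction lemma precisely, e.g.\ that there is a neighborhood $\mathcal N$ of $(z^*,\lambda^*)$ and constants $c_1,c_2>0$ with
\begin{equation}
\|v^+ - v^*\| \le \left( \frac{c_1}{\rho} + c_2\gamma \right)\|v - v^*\| + c_3\|v-v^*\|^2 ,
\end{equation}
and then conclude by the standard argument: choose $\rho$ large and $\gamma$ small so the bracket is below $1/2$, shrink $\mathcal N$ so that $c_3\|v-v^*\|\le 1/4$ there, and iterate. The main obstacle I expect is Step~3, the spectral-radius/contraction estimate for the consensus variant: unlike standard ALADIN, here the coordination step uses the aggregated Hessian $\sum_i B_i^+$ through \eqref{eq: reduced QP} rather than solving the full coupled QP, so I must verify that this ``Schur-reduced'' step still reproduces the SQP Newton step on the reduced space spanned by the consensus constraint — i.e.\ that eliminating $\Delta x_i$ and $\lambda_i$ via \eqref{eq: Deltx}--\eqref{sec: mix} does not destroy the Jacobian match that drives the contraction. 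A secondary difficulty is handling the nonsmooth/Clarke-subgradient case invoked earlier for \eqref{eq: BFGS}; I would either restrict Theorem~\ref{them: local convergence} to the $C^2$ setting (consistent with the hypothesis stated just above it) or assume semismoothness of $\nabla f_i$ so the Taylor remainder in Step~2 is still $o(\|v-v^*\|)$.
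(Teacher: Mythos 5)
Your Steps 1--3 reproduce, in a more abstract Newton-map language, what the paper actually does for the small-$\gamma$ regime: it first derives the subproblem bound $\frac{\rho}{\sigma}\|z-z^{*}\|+\frac{1}{\sigma}\|\lambda_i-\lambda_i^*\|\geq\|x_i^{+}-z^*\|$ from the second order sufficient condition of the augmented local problems (your Step~1), then invokes the Newton--Lagrange/SQP estimates $N\|z^{+}-z^*\|\leq\gamma\sum_{i}\|x_i^{+}-z^*\|$ and $\sum_{i}\|\lambda_i^{+}-\lambda_i^*\|\leq\gamma\sum_{i}\|x_i^{+}-z^*\|$ for the coordination step (your Steps~2--3), and chains the two to obtain a contraction with factor $\frac{(\rho+1)\gamma}{\sigma}$, the quadratic and superlinear variants following by replacing $\gamma$ with $\frac{\omega_f}{2}\sum_{i}\|x_i^{+}-z^*\|$ or with $\gamma+\frac{\omega_f}{2}\sum_{i}\|x_i^{+}-z^*\|$. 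Up to that point the two arguments are essentially the same.

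The genuine gap is your closing prescription ``choose $\rho$ large and $\gamma$ small.'' For Reduced Consensus ALADIN the Hessian surrogate is $B_i=\rho I$, so $\gamma=\sup_i\|\rho I-\nabla^2 f_i(x_i)\|$ grows linearly in $\rho$; you cannot simultaneously send $\rho\to\infty$ and keep $\gamma\leq\bar\gamma$. Since the theorem's hypothesis is precisely ``sufficiently large $\rho$,'' the case that actually needs proving is the large-$\gamma$ one, and there your contraction factor $c_1/\rho+c_2\gamma$ exceeds $1$ and the spectral-radius argument collapses. The paper handles this as its case~(d) by abandoning the SQP contraction altogether: for $\rho$ large enough the augmented subproblems \eqref{eq: consensus ALADIN-step1} become locally strongly convex, so the Lyapunov-based theory (Theorem~\ref{The: 1} and Corollary~\ref{corollary}, applied with an iteration-dependent $\delta^k$ bounded by some $\bar\delta$) yields Q-linear convergence with rate $1/\sqrt{1+\bar\delta}$. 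Without this second mechanism your proof covers only the Consensus BFGS variant, where $\gamma$ can indeed be driven small independently of $\rho$, and not the reduced variant that the ``large $\rho$'' clause is chiefly about. A secondary inaccuracy: your Step~1 claim that the local NLP error is $O(1/\rho)$ times the current residual is too strong --- the SOSC bound gives coefficient $\rho/\sigma\approx 1$ on $\|z-z^*\|$ and only $1/\sigma=O(1/\rho)$ on the dual error, so the subproblem step is not by itself a contraction in $z$ and the $c_1/\rho$ term in your contraction lemma is not justified as stated.
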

	\begin{proof}
		See Appendix \ref{APP: H}.
	\end{proof}
	
	Next, we will show the numerical performance of C-ALADIN.		
	
	%	Note that the local convergence analysis in Subsection~\eqref{sec: local convergence} can be also applied in the global convergence rate analysis in Subsection~\eqref{sec: convex rate}, since convex problems can be treated as special cases of nonconvex problems. However, what we have proposed in Subsection~\eqref{sec: local convergence} shows that a similar way of analysing convergence rate can be also applied in the convex ALADIN framework without depending on the second order information of each subproblems. This shows a similarity between ALADIN and ADMM.
	%	From theory part of view, we show that the convergence result of ALADIN is at least as strong as ADMM in the convex world, but for nonconex cases we can show much more.

	\section{Numerical Experiments}\label{sec: Numerical}
	In this section, we illustrate the numerical performance of the proposed algorithms on both distributed optimization (Subsection \ref{sec: DC}) and learning (Subsection \ref{sec: FL case}). 
	
	% As mentioned in Section \eqref{sec:introduction} and \eqref{sec: FedALADIN}, C-ALADIN was originally designed for optimization problems and has not been specifically tailored for use in federated learning. However, it is still worth examining its performance in this context, even in the absence of theoretical guarantees and revisions.

	\subsection{ Case Studies on Distributed Consensus Optimization }\label{sec: DC}
	
	In this subsection, all the implementation of algorithms relies on \texttt{Casadi-v3.5.5} with \texttt{IPOPT} \cite{Andersson2019}. 
	
	The first case is a convex consensus least square problem
	\begin{equation}\label{eq: convex least square}
		\begin{split}
			\min_{x_i ,z}\;\;&\frac{1}{2} \sum_{i=1}^{N}\|x_i- \zeta_i\|_2^2 \\ \quad\mathrm{s.t.}\;\;& \;x_i = z\  |\lambda_i. \;
		\end{split}
	\end{equation}
	Here, $x_i\in \mathbb R^{100}$ and $N=200$. The measured data $\zeta_i$s are drawn from the Gaussian distribution $\mathcal N(0, 25) $. In this setting, Problem \eqref{eq: convex least square} has $20100$ primal variables and  $20000$ dual variables, which is a large-scale optimization problem. 
	\begin{figure}[htbp]
		\centering
		\includegraphics[width=0.4\textwidth,height=0.23\textheight]{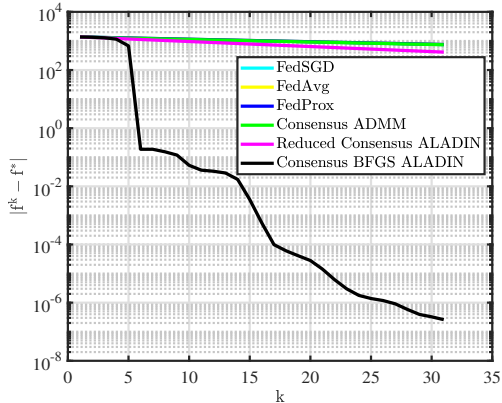}
		\caption{Numerical comparison on \textbf{convex} case.}
		\label{fig:Convex}
	\end{figure}
	In our implementation, the learning rate of \texttt{FedSGD} $\eta_i$s are set as $0.01$ while other compared algorithms can update the local primal variables with \texttt{Casadi} exactly. Moreover, the hyper-parameter $\rho$ is set as $10^2$ for other algorithms. Note that, all the initial values of primal and dual variables are set as zeros vectors.
	In the optimization framework, we assume that all the other algorithms can be solved exactly with local optimizer.

	As the same setting, a distributed non-convex optimization problem can be easily implemented as \eqref{eq: nonconvex problem}. Note that, excluding the second term of the objective function, the non-convex optimization problem is directly reduced to the convex one \eqref{eq: convex least square}.
	\begin{equation}\label{eq: nonconvex problem}
		\begin{split}
			\min_{x_i ,z}\;\;& \sum_{i=1}^{N}\frac{1}{2}\left(\|x_i^a- \zeta_i^a\|_2^2+\|x_i^b- \zeta_i^b\|_2^2\right)\\
			&\quad\;\;+\text{log}\left(\frac{1}{2}\|(x_i^a-x_i^b)^2-\zeta_i^c\|_2^2\right) \\ \quad\mathrm{s.t.}\;\;&\; x_i = z\  |\lambda_i \;
		\end{split}
	\end{equation}
	with $x_i=[(x_i^a)^\top,(x_i^b)^\top]^\top$.

	\begin{figure}[htbp]
		\centering
		\includegraphics[width=0.4\textwidth,height=0.23\textheight]{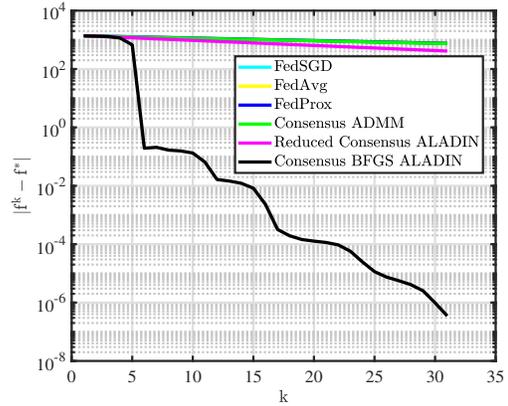}
		\caption{Numerical comparison on on \textbf{nonconvex} case.}
		\label{fig:nonconvex}
	\end{figure}
	
	Figure \eqref{fig:Convex} and \eqref{fig:nonconvex} illustrate the numerical convergence comparison among several numerical algorithms  for Problem \eqref{eq: convex least square} and \eqref{eq: nonconvex problem} with iteration $k$. Consensus BFGS ALADIN performs far superior to other algorithms according to the given two numerical optimization cases (convex and non-convex). The performance of Reduced Consensus ALADIN is also better than other existing distributed algorithms. We didn't compare with all the existing algorithms such as Douglas-Rachford splitting (DRS) since they have similar performance as ADMM in practice. %In some cases, they can be regarded as special cases of ADMM. 
	In both cases, Consensus BFGS ALADIN
	can get a high accuracy ($10^{-4}$) within $20$ iterations. This shows the importance of second order information in distributed consensus optimization.

	From the above results, we can easily observe the convergence trend of all the evaluated algorithms. We want to stress that other algorithms do not have a generalized convergence guarantee for non-convex problems.

	%\begin{equation*}
	%	\begin{split} 
		%		\min_{x_1, x_2}\;\;&\frac{1}{2} \|Ax_1 - b \|^2 +\kappa \|x_2\|_1 \\ \quad\mathrm{s.t.}\;\;& x_1 = x_2 \  |\lambda. \;
		%	\end{split}
	%\end{equation*}
	%which is equilivent to 
	%\begin{equation*}
	%	\begin{split} 
		%		\min_{x_1, x_2, z}\;\;&\frac{1}{2} \|Ax_1 - b \|^2 +\kappa \|x_2\|_1 \\ \quad\mathrm{s.t.}\;\;& x_1 = z\  |\lambda_1 \;\\
		%		& x_2 = z\  |\lambda_2. \;
		%	\end{split}
	%\end{equation*}
	%Here $A\in \mathbb{R}^{10\times 100}$, $b\in \mathbb R^{10}$.

	\subsection{Case Studies on Federated Learning}\label{sec: FL case}
	In this subsection, we illustrate the numerical performance of the proposed algorithm in FL. 
	Consensus BFGS ALADIN is not compared with other algorithms in this subsection since second-order information is rarely used in the community of FL.
	For fairness, our numerical comparison relays on the implementation of recent work \cite{zhou2022federated} with released code
	\url{https://github.com/ShenglongZhou/FedADMM}. Here, we mainly focus on the convergence rate and stability of the algorithm.

	Different from the standard Consensus ADMM, the hyperparameter
	$\rho_i$s in \cite{zhou2022federated} are set to a different value, which means that the learning rate of each sub-problem will be affected differently. Same as \cite{zhou2022federated}, here we first compared performance algorithms on a convex linear regression problem with local objectives \eqref{eq: linear regression} and non-i.i.d. data.
	\begin{equation}\label{eq: linear regression}
		f_i(x) = \sum_{t\in \mathcal D_i} \frac{1}{2d_i} ((a_i^t)^\top x-b_i^t)^2.
	\end{equation}
	Here $a_i^t\in \mathbb R^{100}$ and $b_i^t\in \mathbb R$ are the $t$-th sample data of client $i$. We set $x\in \mathbb R^{100}$ and $N=100$.
	Another example is a non-convex logistic regression  with sub-objectives \eqref{eq: logistic regression}.
	\begin{equation}\label{eq: logistic regression}
		f_i(x)=\frac{1}{|\mathcal D_i|}\sum_{t\in \mathcal D_i} \left( \text{ln}\left( 1+e^{(a_i^t)^\top x}\right)-b_i(a_i^t)^\top x\right) + \frac{\lambda}{2}\|x\|^2.
	\end{equation}
	Here $a\in \mathbb R^{1024}$, $b\in \{0,1\}$,
	$\lambda=0.001$, $x\in \mathbb R^{1024}$ and also with $100$ clients. Importantly, each client has a participation rate and is set as $0.1$. %Although C-ALADIN didn't have these specific considerations in mind.
	The rest of the technical details can be referred to \cite[Section 5]{zhou2022federated}. For fairness, we have no other updates.
	\begin{figure}[htbp]
		\centering
		\begin{minipage}{0.49\linewidth}
			\centering
			\includegraphics[width=1\linewidth]{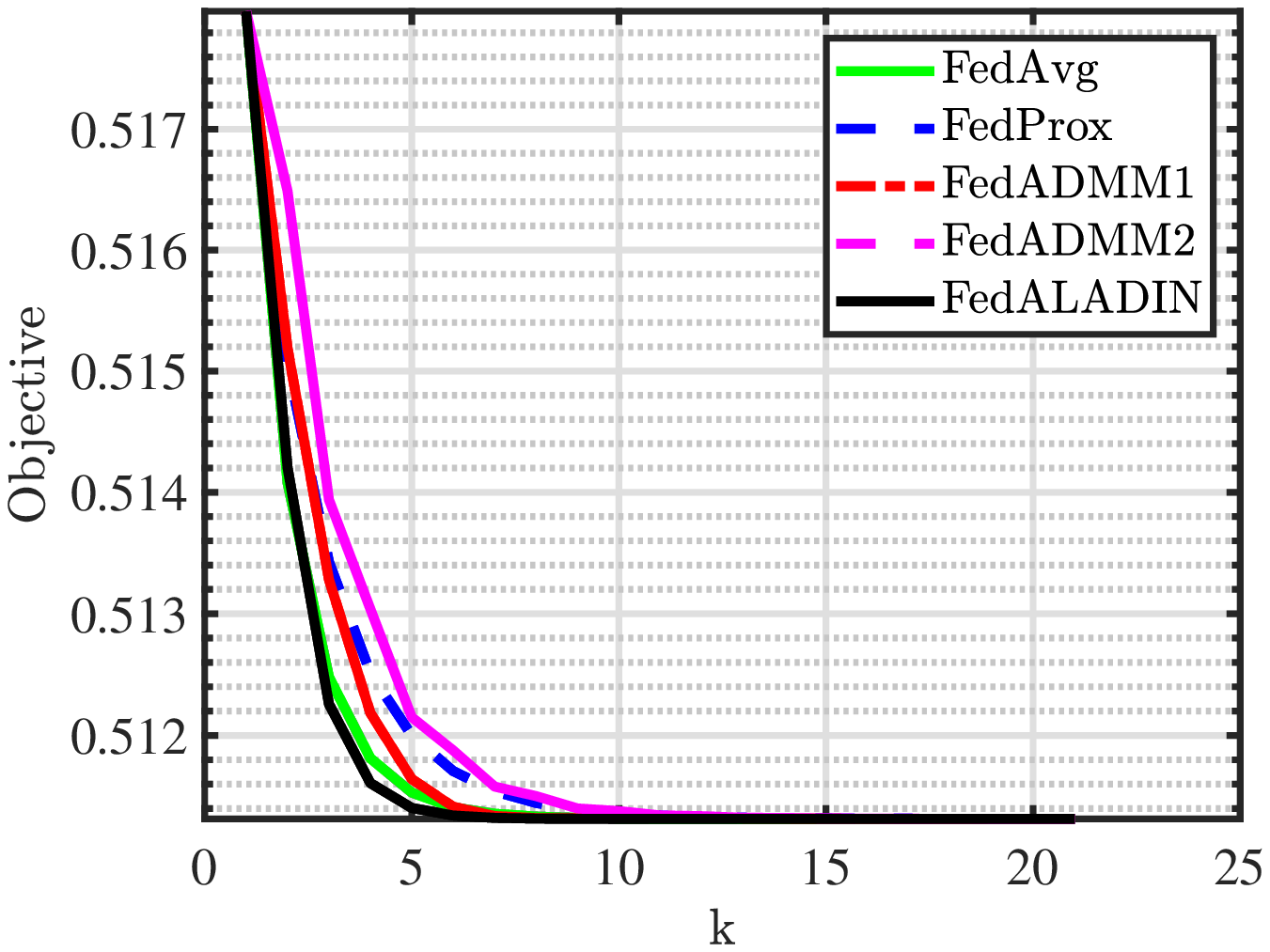}
			\caption{Linear regression with different $\rho$ (\textbf{convex} problem).}
			\label{fig: diff_rho_convex}%文中引用该图片代号
		\end{minipage}
		\begin{minipage}{0.49\linewidth}
			\centering
			\includegraphics[width=1\linewidth]{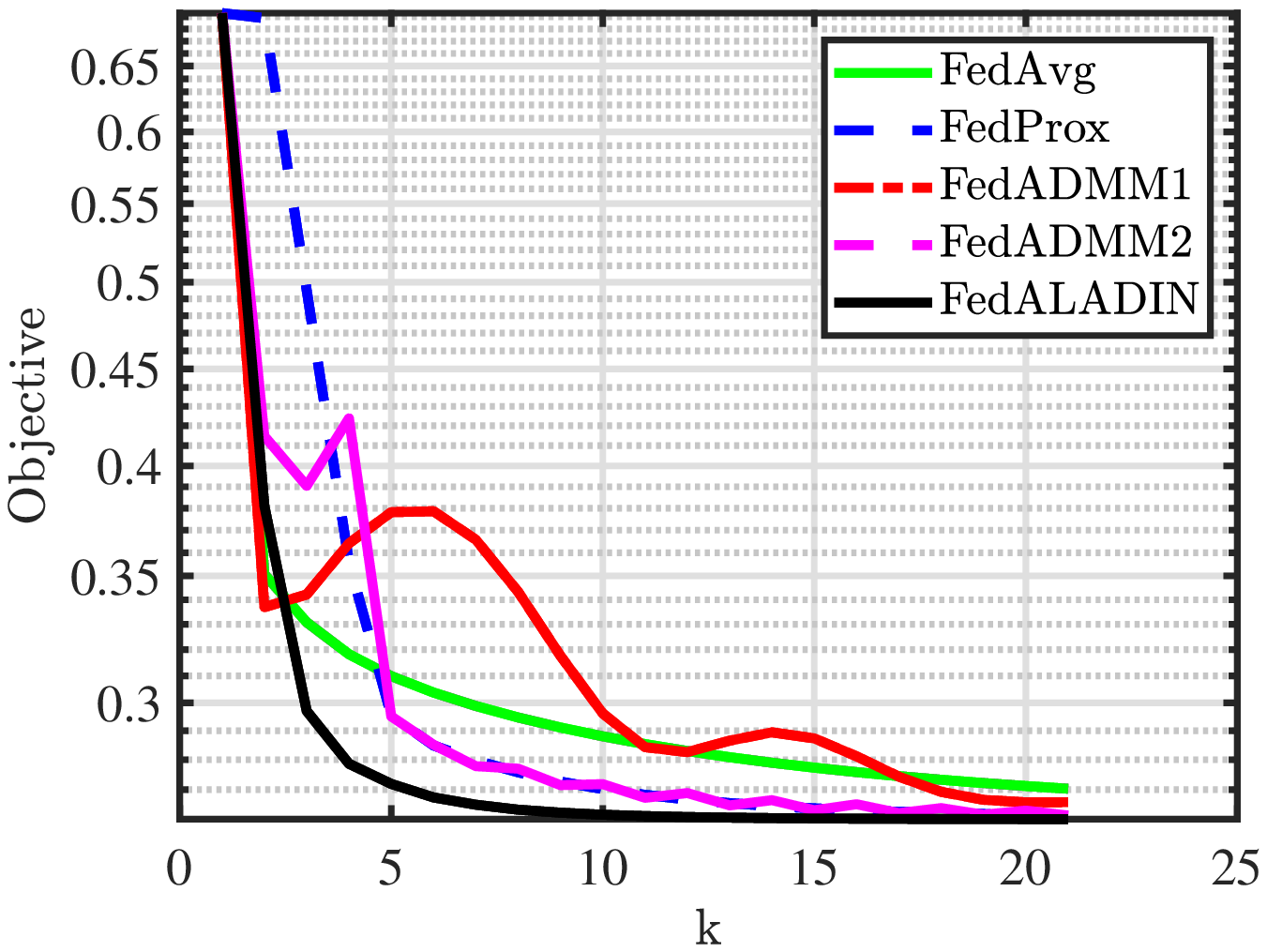}
			\caption{Logistic regression with different $\rho$ (\textbf{nonconvex} problem).}
			\label{fig: diff_rho_nonconvex}%文中引用该图片代号
		\end{minipage}
	\end{figure}
	
	Figure \eqref{fig: diff_rho_convex} and \eqref{fig: diff_rho_nonconvex} show the numerical comparison on the linear regression and logistic regression problems respectively.   
	Here, \texttt{FedADMM1} represents the method proposed by \cite{zhou2022federated} that update the dual before global aggregation. \texttt{FedADMM2}, follows the order as \cite{boyd2011distributed}. The convergence performance of the two schemes is very different in logistic regression problem. 
	It has to be pointed out that, in the optimization setting,  there is only a little difference in performance between the two ADMM variations mentioned above, but in this experiment, we can see that they behave completely differently. This once again illustrates the fundamental difference between locally exact or inexactly search of the local primal variables.   While this paper does not address this issue theoretically in C-ALADIN family, it still points out that it is an open question worthy of attention.

	\begin{figure}[htbp]
		\centering
		\begin{minipage}{0.49\linewidth}
			\centering
			\includegraphics[width=1\linewidth]{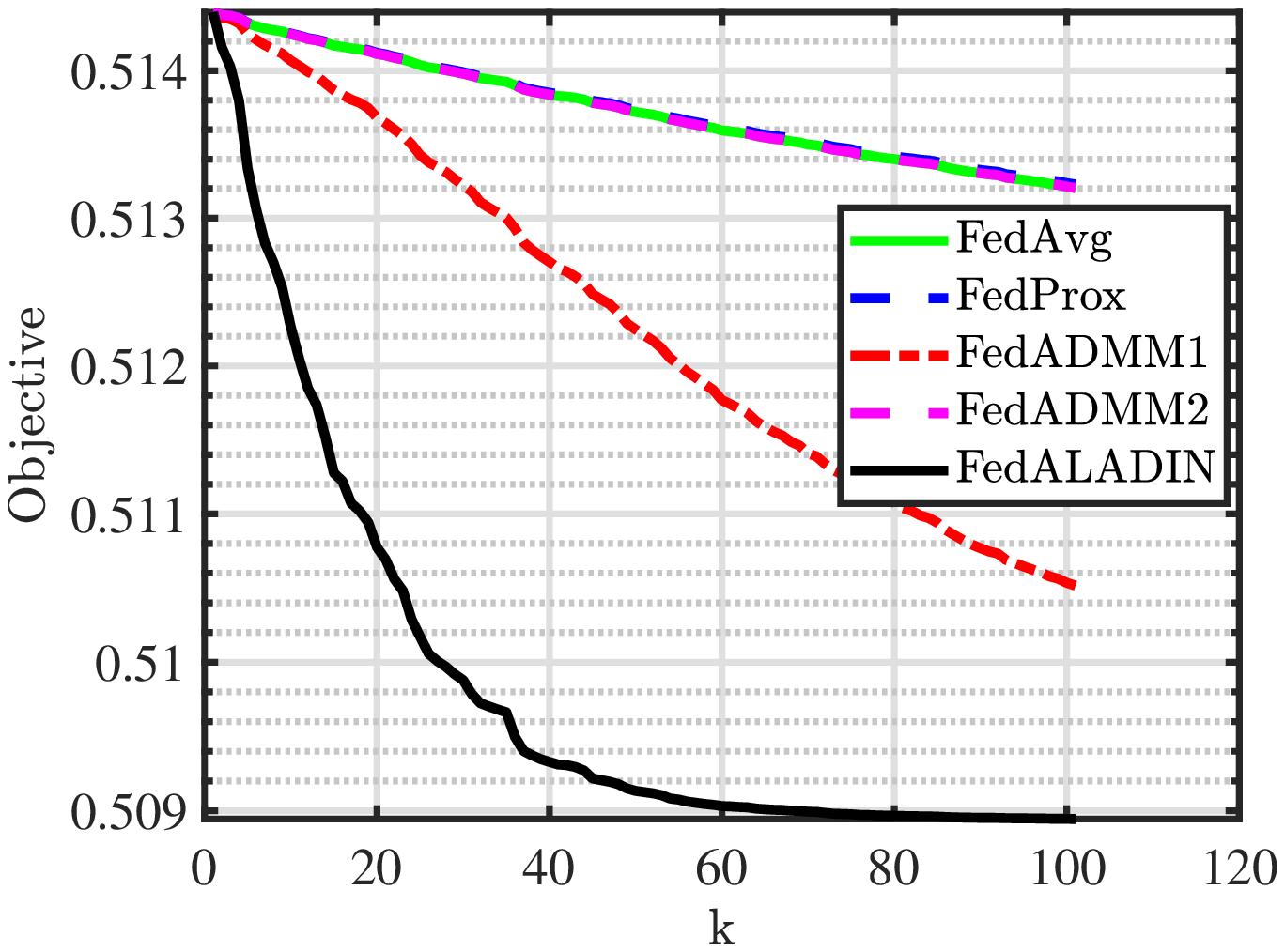}
			\caption{Linear regression with the same $\rho$  (\textbf{convex} problem).} 
			\label{fig: same_rho_linear}%文中引用该图片代号
		\end{minipage}
		\begin{minipage}{0.49\linewidth}
			\centering
			\includegraphics[width=1\linewidth]{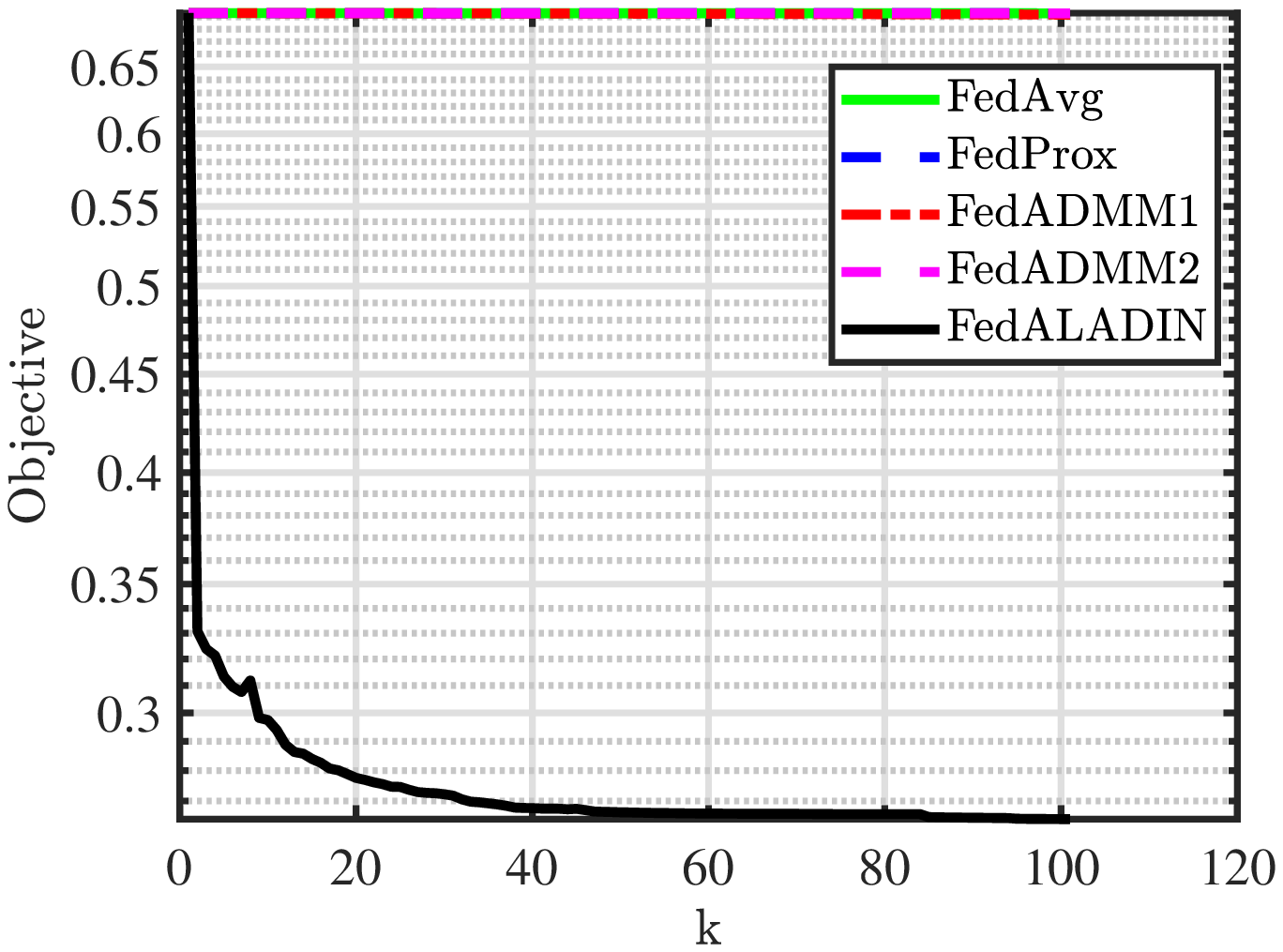}
			\caption{Logistic regression with with same $\rho$ (\textbf{nonconvex} problem).}
			\label{fig: same_rho_nonconvex}%文中引用该图片代号
		\end{minipage}
	\end{figure}
	For further discussion, as shown in Figure \eqref{fig: same_rho_linear} and \eqref{fig: same_rho_nonconvex}, we find that when the hyperparameter $\rho$ is set to the same value ($\rho=0.5$ for linear regression and $0.05$ for logistic regression). With a learning rate $\eta=0.01$ for all the algorithms, the convergence performance of \texttt{FedALADIN} is far better than the existing algorithms at least in these two examples. In addition, we found that \texttt{FedADMM2} is not stably converged in the first several iterations.
	\section{Related Work}
	\label{related-work}
	Existing distributed convex optimization algorithms can be roughly divided into two types: primal decomposition (PD)
	\cite{geoffrion1970primal,demiguel2008decomposition,engelmann2022scalable} and DD (also called Lagrangian decomposition).
	We refer  \cite{decomposition,terelius2010distributed,pflaum2014comparison,palomar2006tutorial}
	as references for more details.
	
	PD aims to partition the problem in a lower-upper level fashion, where the upper-level problem considers the lower-higher level problems by their optimal value functions which means control the private variables directly. 
	Different from the former, the higher level problem influence the lower level ones by using dual variables (shadow price) in the DD structure. %Usually the decision variables are always feasible at each iteration with PD. On the contrary,  feasibility satisfies only after convergence with DD. 
	To the best of our knowledge, only few literature  studied the theoretical comparison between PD and DD.
	%convergence rate and stability of the two, and in different scenarios, the performance of the two has winner and loser.
	However, numerically, \cite{terelius2010distributed,pflaum2014comparison} showed that DD performances a better convergence rate compare with PD, however the degree of stability of convergence is the opposite in some applications. A discussion can be found in \cite[Section I]{ling2015dlm}.  The efforts in PD and DD can be further categorized into the following two fashions, namely exact search  and inexact search. Exact search comes from the optimization community while inexact search is drawn from the FL community.
	
	In the PD family, representative algorithms of exact search  are DGD \cite{yuan2016convergence} and EXTRA \cite{shi2015extra}.
	On the inexact search side, \texttt{FedSGD} \cite{ioffe2015batch}, \texttt{FedAvg} \cite{mcmahan2017communication}, \texttt{FedProx} \cite{li2020federated} and \texttt{FedDANE} \cite{li2019feddane} were proposed.

	In the DD family, current techniques, consisting of consensus ADMM \cite{boyd2011distributed} and DD, only focus on distributed convex problems with exact search. In contrast, our proposed C-ALADIN has guarantee for non-convex problems.
	On the inexact search side, the state-of-the-art algorithm is \texttt{FedADMM} \cite{zhou2021communication,zhou2022federated,wang2022confederated}. Our proposed \texttt{FedALADIN} extends this line of work by showing a more stable convergence performance than \texttt{FedADMM}.

	% shows a better convergence performance compare    Moreover,  illustrate a bi-level ADMM based federated learning framework for multiple servers that which is an extension of all the previous frameworks  ..

	% We next categorize existing work from the aspects of exact search and inexact search. 
	
	% Exact search follows the convention of optimization community, which consists of PD (\cite{yuan2016convergence} and EXTRA \cite{shi2015extra}) and DD (Consensus ADMM  and C-ALADIN).
	% \begin{figure}[H]
		% 	\centering
		% 	\includegraphics[width=0.43\textwidth,height=0.17\textheight]{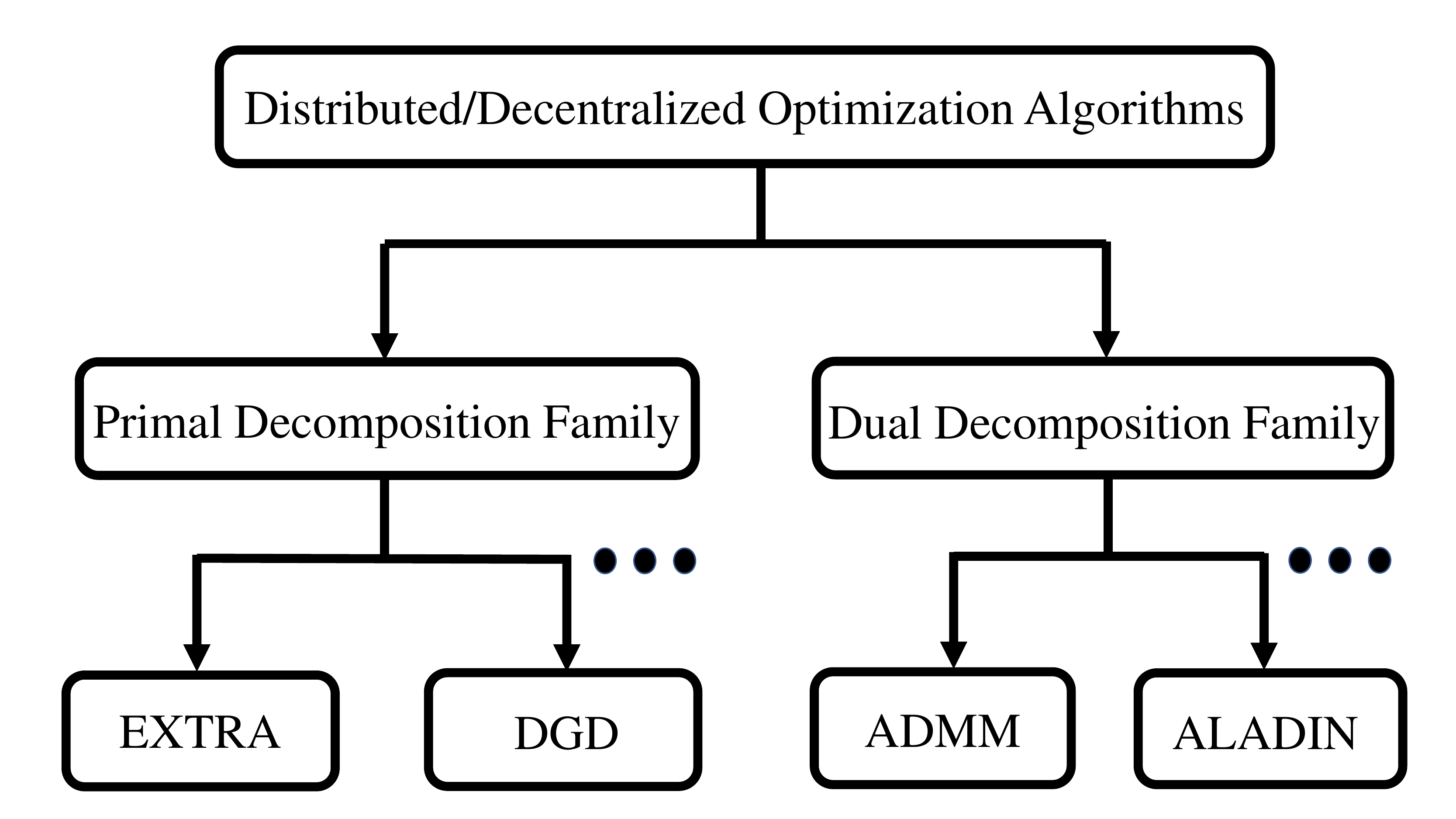}
		% 	\caption{Primal decomposition (PD) and dual decomposition (DD) algorithms.}
		% 	\label{fig:Primal-Dual}
		% \end{figure}
	
	% Note that, decentralized optimization algorithms such as DGD \cite{yuan2016convergence} and EXTRA \cite{shi2015extra}  can be treated as consensus special cases of PD.  
	% As a well-known distributed algorithm, ADMM combined DD with augmented Lagrange method (ALM). In this way the dual update has an explicit form which induces the convergence of the algorithm for convex problems more stable in practice \cite{boyd2011distributed}. Although  \cite{hong2016convergence,wang2019global} explores some special cases where ADMM can be used and converges for non-convex problems, it is still a huge challenge for ADMM in general and so do the class of PD. 

	\begin{table}[H]
		%\caption{values of numerical experiments}\\
		\renewcommand{\arraystretch}{1.5}
		\caption{Existing and our proposed algorithms for DC.}
		\label{table:comparison2}
		\begin{center}
			%\begin{tabular}{|c||p{0.9cm}|p{1cm}|p{1.2cm}|p{1.3cm}|}
			\begin{tabular}{|c|c|c|}
				\hline
				Methods	&\makecell{Primal Decomposition}& \makecell{Dual Decomposition}\\
				\hline
				Exact    &\makecell{DGD \cite{yuan2016convergence},}  &\makecell{Consensus ADMM \cite{Boyd2011},   } \\
				Search   &\makecell{ EXTRA \cite{shi2015extra}, NIDS \cite{li2019decentralized}}  &\makecell{   DD \cite{everett1963generalized}, {\textbf{C-ALADIN}}} \\
				\hline
				Inexact    &\makecell{\texttt{FedSGD}\cite{ioffe2015batch}, \texttt{FedDANE} \cite{li2019feddane}, }  &\makecell{\texttt{FedADMM} \cite{zhou2022federated,wang2022confederated}, } \\
				Search   &\makecell{\texttt{FedProx} \cite{li2020federated}, \texttt{FedAVG}\cite{mcmahan2017communication}}  &\makecell{{\textbf{\texttt{FedALADIN}}} } \\
				\hline
			\end{tabular}
		\end{center}
	\end{table}

	\section{Conclusion $\&$ Outlook}\label{sec: conclusion}
	This paper proposed a novel distributed consensus algorithm family, named C-ALADIN, that is efficient in solving non-convex problems. 
	In the framework of C-ALADIN, we proposed efficient structure for communication and computation efficiency. Based on the framework, depending on whether second order information is used, two variants of C-ALADIN are proposed, named Consensus BFGS ALADIN and Reduced Consensus ALADIN respectively. Finally, to serve the FL community, we compare a variant of Reduced Consensus ALADIN named \texttt{FedALADIN} with existing method in FL. It performs well with several case studies.

	% First we put forward a solution to solve the problem of large-scale data transmission and large-scale consensus QP operation in the T-ALADIN framework. Then we find the relationship between a variant of C-ALADIN named Reduced Consensus ALADIN and Consensus ADMM. Later, the global and local convergence theory of the proposed algorithm is established. 
	
	Other variants of C-ALADIN will be considered in the future to accommodate different types of optimization problems. More importantly, the convergence theory with local inexactly search in C-ALADIN is still lacking.  Such theoretical supplement will assist the algorithm to be applied in more complex neural networks.
	
	\bibliography{paper}

	%\newpage
	
%	\clearpage
	\appendices
	\section{Comparison between   Reduced Consensus ALADIN and Consensus ADMM}\label{sec: Comparison}

	In the standard Consensus ADMM framework, there are two different cases: a) first update the dual then aggregate (update the primal global variable), b) first aggregate  then update the dual. In this subsection we will analyze the Reduced Consensus ALADIN from the above two perspectives. In order to distinguish the Reduced Consensus ALADIN from the Consensus ADMM, we use superscripts on key variables, such as $\lambda_i$ and $z$,  to show difference.

	\subsection{First Update the Dual then Aggregate} 
	Algorithm \ref{alg:FedADMM} shows the first fashion of Consensus ADMM: first update the dual then aggregate \eqref{eq: ADMM1}. In the optimization point of view, \texttt{FedADMM} is a modification of the current fashion.
	% \eqref{eq: ADMM1} 
	\begin{equation}\label{eq: ADMM1}
		\left\{
		\begin{split}
			&{x_i}^+=\mathop{\arg\min}_{x_i}f_i(x_i)+(\lambda_i^{\text{ADMM1}})^\top  x_i+\frac{\rho}{2}\|x_i-z\|^2,\\
			&(\lambda_i^{\text{ADMM1}})^+=(\lambda_i^{\text{ADMM1}})+\rho(x_i^+-z),\\
			&(z^+)^{\text{ADMM1}}=	\frac{1}{N} \sum_{i=1}^{N}\left(x_i^++ \frac{(\lambda_i^{\text{ADMM1}})^+}{\rho}\right).\\
		\end{split}\right.
	\end{equation}
	Here $\lambda_i^\text{ADMM1}$ denotes the dual variables of the current Consensus ADMM fashion.
	From the expression of the subgradients \eqref{eq: subgradient},  one may find 
	\begin{equation}\label{eq: lam-g}
		\lambda_i^\text{ADMM1} = - g_i. 
	\end{equation}

	It can be noticed that the framework of Reduced Consensus ALADIN is very similar to this order of \eqref{eq: ADMM1} . 
	In Reduced Consensus ALADIN, Equation \eqref{eq: consensus QP} boils down to 
	\begin{equation}\label{eq: rhoALADIN}
		\begin{split}
			(z^+_{\rho})^{\text{ALADIN}}=		\mathop{\mathrm{\arg}\mathrm{\min}}_{ \Delta x_i, z}& \quad \mathop{\sum}_{i=1}^{N} \left(\frac{\rho}{2}\Delta x_i^\top \Delta x_i+g_i^\top \Delta x_i\right) \\
			\mathrm{s.t.} &\qquad\Delta x_i+x_i^+=z\; |\lambda_{i}^{\text{ALADIN}}
		\end{split}
	\end{equation}
	which is equivalent to the same operation as that of ADMM framework  (if we ignore the auxiliary variables $\Delta x_i$s)
	\begin{equation}\label{eq: rhoADMM}
		\begin{split}
			(z^+)^{\text{ADMM1}}=&	\mathop{\mathrm{\arg}\mathrm{\min}}_{z} \; \mathop{\sum}_{i=1}^{N} \left(\frac{\rho}{2}\| x_i^+-z\|^2-(\lambda_i^{\text{ADMM1}})^\top z\right).%\\
			%	=&\frac{1}{N} \sum_{i=1}^{N}\left(x_i^++ \frac{(\lambda_i^{\text{ADMM1}})^+}{\rho}\right).
		\end{split}
	\end{equation}
	In both ways of updating the global variable $z$, Equation \eqref{eq: rhoALADIN} and \eqref{eq: rhoADMM}, have the same result as Equation \eqref{eq: reduced QP2}.  %From Equation \eqref{eq: KKT} it's clear that $\Delta x_i=z-x_i^+$ at the optimal solution of each iteration.
	However, the dual update is different in C-ALADIN \eqref{eq: ALADIN dual},
	\begin{equation}\label{eq: ALADIN dual}
		\lambda_{i}^{\text{ALADIN}}=\rho(x_i^--z)-g_i .
	\end{equation}
	Different from Reduced Consensus ALADIN, in the Consensus ADMM iteration \eqref{eq: ADMM1}, 
	\begin{equation}
		\sum_{i=1}^N (\lambda_i^\text{ADMM1})^* =0
	\end{equation}
	is guaranteed only at the optimal point.
	On the opposite, with  Reduced Consensus ALADIN \eqref{eq: ALADIN dual},
	\begin{equation}\label{eq: ALADIN dual KKT}
		\sum_{i=1}^N \lambda_i^{\text{ALADIN}} =0
	\end{equation}
	is guaranteed in each iteration.
	%	Note that the KKT condition here is different from that of T-ALADIN. 
	Since the first version of Consensus ADMM can not bring the latest dual update back to each agent,  the updating rule of the dual in Reduced Consensus ALADIN can be interpreted as a more efficient way for local primal variable update. The Reduced Consensus ALADIN framework makes better use of the consensus dual information.

	%Reduced Consensus ALADIN can be treated as a standard Consensus ADMM framework with an additional step \eqref{eq: ALADIN dual}.   The extra step enforces faster convergence by applying the latest global variable $z$. Moreover, the global variable $z$ will bring different way of convergence proof compare with the typical ALADIN.
	
	\subsection{First Aggregate then Update the Dual}
	
	As introduced in \cite{Boyd2011}, Consensus ADMM can also be interpreted in another fashion:
	\begin{equation}\label{eq: ADMM22}
		\left\{
		\begin{split}
			&{x_i}^+=\mathop{\arg\min}_{x_i}f_i(x_i)+(\lambda_i^{\text{ADMM2}})^\top  x_i+\frac{\rho}{2}\|x_i-z\|^2,\\
			&(z^+)^{\text{ADMM2}}=	\frac{1}{N} \sum_{i=1}^{N}\left(x_i^++ \frac{\lambda_i^{\text{ADMM2}}}{\rho}\right),\\
			&(\lambda_i^{\text{ADMM2}})^+=(\lambda_i^{\text{ADMM2}})+\rho(x_i^+-z^+).
		\end{split}\right.
	\end{equation}
	With \eqref{eq: ADMM22}, Consensus ADMM  can also converge for convex problems with guarantees.
	In this form, the update of $\lambda_i^{\text{ADMM2}}$s has the same property as Reduced Consensus ALADIN that guarantees 
	\begin{equation}\label{eq: ADMM2}
		\sum_{i=1}^N \lambda_i^{\text{ADMM2}} =0
	\end{equation}
	in each iteration. In this way, the dual variables can also carry sensitivity information of the  gap between the latest global variable $z$ and the local variables $x_i^+$. 
	However, the second version of Consensus ADMM can not upload the latest local dual \eqref{eq: lam-g} back to the master because of \eqref{eq: ADMM2}.  Reduced Consensus ALADIN is more efficient for global variable aggregation compare with \eqref{eq: ADMM22}.  %Latest dual information will not be used for aggregation.
	
	Even without Hessian information updated, the Reduced Consensus ALADIN can still inherit both of the benefits of two different Consensus ADMM variations. In fact, it can be treated as a combination of the them. The new designed structure provides both latest information of the dual to the agents and the master.

	\section{ Preliminaries of the Lyapunov Stability Theory}\label{App: Lya}
	
	In this section, we demonstrate the intuition of proving the global convergence of C-ALADIN (Subsection \ref{sec: global convergence} and \ref{sec: convex rate}). Roughly, we adopt the theory of \emph{Lyapunov stability} in control community \cite{nonlinear} for nonlinear systems. The rationale behind such is that the concept of \emph{global convergence} in optimization has a similarity as \emph{stable around the origin} in control.
	
	%		In this paper,  the proof of global convergence (Subsection \ref{sec: global convergence} and \ref{sec: convex rate}) are based on a theory named the \emph{Lyapunov stability} in control community \cite{nonlinear} for nonlinear systems.
	%  {Although optimization and control belongs to different  communities, the basic analysis method can be transfered}.
	%In details, the concept of \emph{global convergence} in optimization has a similarity as \emph{stable around the origin} in control.
	
	We assume that a nonlinear system is described by $\dot\varsigma = \phi(\varsigma)$ where $\varsigma$ is the state variable and $\phi(\cdot)$ is a nonlinear mapping from $\mathbb R^{|\varsigma |}\rightarrow\mathbb R^{|\varsigma |}$ which denotes the dynamic of a system. Here we introduce an unbounded mapping, named  Lyapunov function
	\begin{equation}
		\Phi( \varsigma) : \mathbb R^{|\varsigma |}\rightarrow\mathbb R
	\end{equation}
	$\Phi( \varsigma)$ has the following key properties:
	\begin{itemize}
		\item $\Phi( \varsigma)$ is strictly monotonically decreasing along the trajectories of the system, in other words $\dot \Phi= \nabla_x \Phi^\top  \phi< 0$.
		\item $	\Phi( \varsigma)> 0$ if  $\varsigma \neq 0$; otherwise, $	\Phi( \varsigma)=0$. 
		% if and only if $ \varsigma =0$.
		% \item $	\Phi$  is strictly monotonically decreasing if $ \dot \Phi< 0$.
	\end{itemize}
	% that  $\Phi$  is called positive definite if $	\Phi( \varsigma)\geq 0$ for all $\varsigma$ but  $	\Phi( \varsigma)=0$ if and only is $ \varsigma =0$.  On the other hand, $	\Phi$  is called strictly monotonically decreasing if $ \dot \Phi< 0$.
	With the above properties, we have the following theorem.
	\begin{theorem}\label{theorem: lya}
		If $\Phi$ is positive definite, bounded from below, and strictly monotonically decreasing,
		then $\varsigma$  is bounded and converges to $0$ (stable around the origin) \cite{nonlinear}. 
	\end{theorem}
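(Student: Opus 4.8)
The plan is to run the classical Lyapunov argument, specialized to the notation $\dot\varsigma=\phi(\varsigma)$ and the Lyapunov function $\Phi$ introduced just above the statement. First I would fix an initial state $\varsigma(0)$ and use the hypothesis $\dot\Phi=\nabla\Phi^\top\phi<0$ (for $\varsigma\neq 0$) to conclude that $t\mapsto\Phi(\varsigma(t))$ is strictly decreasing along the trajectory, hence $\Phi(\varsigma(t))\le\Phi(\varsigma(0))$ for all $t\ge 0$. Since $\Phi$ is positive definite and unbounded (radially unbounded / proper), the sublevel set $\{\varsigma:\Phi(\varsigma)\le\Phi(\varsigma(0))\}$ is compact; because the trajectory never leaves this set, $\varsigma(\cdot)$ is bounded, and standard ODE theory then guarantees the solution exists for all $t\ge 0$. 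This already yields the boundedness claim and, together with positive-definiteness of $\Phi$, the Lyapunov-stability part: a small $\Phi(\varsigma(0))$ traps the whole trajectory in a small sublevel set around the origin.

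Next I would prove convergence to the origin. Since $\Phi(\varsigma(t))$ is non-increasing and bounded below by $0$, it converges to some $c\ge 0$, and the task is to show $c=0$. I would argue by contradiction: if $c>0$, then for all $t$ the trajectory stays in the compact annulus $\Omega=\{\varsigma: c\le\Phi(\varsigma)\le\Phi(\varsigma(0))\}$, which excludes the origin. On $\Omega$ the continuous function $\dot\Phi=\nabla\Phi^\top\phi$ is strictly negative and attains its maximum, so $\dot\Phi\le-\gamma$ for some $\gamma>0$; integrating gives $\Phi(\varsigma(t))\le\Phi(\varsigma(0))-\gamma t\to-\infty$, contradicting boundedness below. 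Hence $c=0$, i.e. $\Phi(\varsigma(t))\to 0$. Finally, bounding $\Phi$ below by a class-$\mathcal K$ function of $\|\varsigma\|$ (available from positive-definiteness plus properness, or invoked directly as in \cite{nonlinear}) upgrades $\Phi(\varsigma(t))\to 0$ to $\|\varsigma(t)\|\to 0$.

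The main obstacle is making the ``$\dot\Phi$ bounded away from zero on the annulus'' step airtight: it depends on compactness of $\Omega$ (hence on the radial unboundedness of $\Phi$, which the excerpt encodes by calling $\Phi$ unbounded) and on continuity of $\phi$ and $\nabla\Phi$; one must also confirm the solution does not leave its maximal domain in finite time, which is precisely what the a priori bound from the first step rules out. (When $\dot\Phi\le 0$ only weakly, this step is replaced by LaSalle's invariance principle, but strict decrease makes the direct contradiction above sufficient.) For the discrete-time incarnation actually used in this paper --- e.g. the descent inequality $\mathscr{L}(z,\lambda)-\mathscr{L}(z^+,\lambda^+)\ge\alpha(\cdot)\ge 0$ of Theorem~\ref{The: 1} --- the same skeleton applies with the integral replaced by a telescoping sum: summability of the per-step decreases forces the argument of $\alpha$ to $0$, and monotonicity of the class-$\mathcal K$ function $\alpha$ then delivers the convergence, the analogue of the annulus estimate being that $\alpha$ is bounded away from $0$ outside any neighbourhood of the origin.
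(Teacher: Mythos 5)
Your proposal is correct and follows essentially the same route as the paper's own proof: assume non-convergence, integrate $\dot\Phi$ along the trajectory to drive $\Phi$ to $-\infty$, and contradict positive definiteness and boundedness from below. Your version is in fact tighter than the paper's, which jumps from ``$\dot\Phi<0$'' to ``$\int_0^\infty\dot\Phi\,dt=-\infty$'' without the compact-annulus step establishing $\dot\Phi\le-\gamma<0$ uniformly --- precisely the justification your argument supplies.
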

	
	\begin{proof}
		We next prove that $\varsigma \rightarrow 0$ by showing the following contradiction: 
		
		We assume that $\varsigma$ is not convergent to $0$, then we have $\dot \Phi <0$. From the fundamental theorem of calculus, we then have
		\begin{equation}
			\Phi(\varsigma^0)\geq	\Phi(\varsigma^\infty) = \Phi(\varsigma^0) +\underbrace{\int_{0}^{\infty}  \dot \Phi(\varsigma)}_{<0}=-\infty.
		\end{equation}
		However, this shows a contradiction to the positive definiteness of and bounded below of $\Phi$. Hence we have \[   \Phi(\varsigma)\rightarrow0  \quad\text{which implies}  \quad\varsigma\rightarrow0.\]
	\end{proof}
	% \begin{remark}
		Theorem \ref{theorem: lya} states that the system under analysis is stable when the correlation property of the related Lyapunov function holds.
		Note that the existence of a Lyapunov function is only a sufficient condition to prove the stability around the origin. However we can not confirm that the system is  unstable if we can not find a proper Lyapunov function.
		In our proof, if we treat the optimization algorithms as the nonlinear dynamic $ \phi(\varsigma )$ of the control systems, and the gap between the decision variable and the optimizer as the state $\varsigma$, then the \emph{stable around the origin} analysis is equivalent to prove the \emph{global convergence} of optimization algorithms.

	\section{Proof of Theorem \ref{The: 1}}\label{APP: C}
	\begin{proof}
		
		First we introduce two auxiliary functions $\tilde{f}_i$ and $G(\xi) $:
		\begin{equation}\label{eq: aux}
			\left\{
			\begin{split}
				\tilde{f}_i(\xi_i)&=f_i(\xi_i)+(\lambda_i+\rho(x_i^+-z))^\top \xi_i\\
				G(\xi)&=\sum_{i=1}^{N}f_i(\xi_i)+\sum_{i=1}^{N}(\xi_i-z^*)^\top\lambda_i^*.
			\end{split}\right.
		\end{equation}
		%Here the $\lambda_i^*$s are not requaired to be unique. 
		Assume $f_i$s are strictly convex, then we have
		\begin{equation}\label{eq: aux_ineq}
			\left\{
			\begin{split}
				\tilde{f}_i(x_i^+)&\leq\tilde{f}_i(z^*)-\tilde{\alpha}_i(\|x_i^+-z^*\|)\\
				G(z^*)&\leq G(x^+)-\hat\alpha (\|x_i^+-z^*\|)
			\end{split}\right.
		\end{equation}
		where $ \tilde{\alpha}_i (\cdot)$ and $ \hat{\alpha} (\cdot)$ are class $\mathcal K$ functions.
		If we sum up the first equation of \eqref{eq: aux_ineq},
		\begin{equation}\label{eq: auxiliary 1}
			\begin{split}
				\sum_{i=1}^{N}\left\{f_i(x_i^+)-f_i(z^*)\right\}\leq&\sum_{i=1}^{N}\left\{ ( \lambda_i^++\rho(x_i^+-z))^\top(z^*-x_i^+)\right\}\\
				&-\sum_{i=1}^{N}\tilde{\alpha}_i(\|x_i^+-z^*\|).
			\end{split}
		\end{equation}
		Similarly, from the fact that
		\begin{equation*}
			\begin{split}
				G(z^*)-G(z^+)&\leq0\\
			\end{split}
		\end{equation*}
		we have
		\begin{equation}\label{eq: auxiliary 2}
			\begin{split} \sum_{i=1}^{N}\left\{f_i(z^*)-f_i(x_i^+)\right\}\leq&\sum_{i=1}^{N }(x_i^+-z^*)^\top \lambda_i^*\\
				&-\hat\alpha (\|x_i^+-z^*\|).
			\end{split}
		\end{equation}
		Combine Equations \eqref{eq: auxiliary 1} and \eqref{eq: auxiliary 2},
		we have
		\begin{equation}\label{eq: combine}
			\sum_{i=1}^{N}( \lambda_i^+-\lambda_i^*+\rho( x_i^+-z))^\top(x_i^+-z^*)\leq -\alpha(\|x_i^+-z^*\|)
		\end{equation}
		where \[ \alpha(\|x_i^+-z^*\|)= \hat\alpha (\|x_i^+-z^*\|)+ \sum_{i=1}^{N}\tilde{\alpha}_i(\|x_i^+-z^*\|)\in 
		\mathcal K.\]
		
		The left hand side of Equation \eqref{eq: combine} can be interpreted as			
		\begin{equation}\label{eq: Lya dec}
			\begin{split}
				&\sum_{i=1}^{N}( \lambda_i-\lambda_i^*+\rho(x_i^+-z))^\top(x_i^+-z^*)\\
				\overset{\eqref{eq: sum_lam}}{=}& 		\sum_{i=1}^{N}( \lambda_i-\lambda_i^*)^\top(x_i^+-z)  +\rho(x_i^+-z)^\top(x_i^+-z^*)\\
				\overset{\eqref{eq: primal}}{=}& \sum_{i=1}^{N}\left( \lambda_i-\lambda_i^*+\rho\left(\frac{\lambda_i^+-\lambda_i}{2\rho}+\frac{z^++z}{2} -z^*\right)\right)^\top\\
				&\left(\frac{\lambda_i^+-\lambda_i}{2\rho}+\frac{z^++z}{2} -z\right).
			\end{split}
		\end{equation}	
		By trivially decomposing Equation \eqref{eq: Lya dec}, we get the following six parts
		\begin{itemize}
			\item[Part~1] \begin{equation}
				\begin{split}
					&	\sum_{i=1}^{N} \left(\lambda_i - \lambda_i^*\right)^\top \left( \frac{\lambda_i^+-\lambda_i}{2\rho}\right)\\
					\overset{\eqref{eq: quadratic equality}}{=}&\frac{-1}{4\rho} \sum_{i=1}^{N} \left(\|\lambda_i-\lambda_i^* \|^2 - \|\lambda_i^+-\lambda_i^* \|^2 + \|\lambda_i-\lambda_i^+ \|^2\right).
				\end{split}
			\end{equation}
			\item[Part~2] \begin{equation}
				\begin{split}
					\sum_{i=1}^{N} \rho\left( \frac{\lambda_i^+-\lambda_i}{2\rho}\right)^\top\left( \frac{\lambda_i^+-\lambda_i}{2\rho}\right)
					=\frac{1}{4\rho}\sum_{i=1}^{N}\|\lambda_i^+-\lambda_i\|^2. 
				\end{split}
			\end{equation}
			\item[Part~3] \begin{equation}
				\sum_{i=1}^{N} \rho\left(\frac{z^++z}{2} -z^*\right)^\top   \left( \frac{\lambda_i^+-\lambda_i}{2\rho}\right)\overset{\eqref{eq: sum_lam}}{=}0.
			\end{equation}
			\item[Part~4]  
			\begin{equation}
				\sum_{i=1}^{N} \left(\lambda_i - \lambda_i^*\right)^\top \left(\frac{z^+-z}{2}\right) \overset{\eqref{eq: sum_lam}}{=}0.
			\end{equation}
			\item [Part~5]
			\begin{equation}
				\sum_{i=1}^{N} \rho\left( \frac{\lambda_i^+-\lambda_i}{2\rho}\right)^\top \left(\frac{z^+-z}{2}\right) \overset{\eqref{eq: sum_lam}}{=}0.
			\end{equation}
			\item [Part~6] \begin{equation}
				\begin{split}
					&	\rho N\left(\frac{z^++z}{2} -z^* \right)^\top \left( \frac{z^+-z}{2} \right)\\
					=&	\frac{\rho N}{4} \|z^+-z^*\|^2   -\frac{\rho N}{4} \|z-z^*\|^2.\\
				\end{split}
			\end{equation}
		\end{itemize}
		In the end, the combination of the above six parts gives us:
		\begin{equation}
			\begin{split}
				&\sum_{i=1}^{N}( \lambda_i-\lambda_i^*+\rho(x_i^+-z))^\top(x_i^+-z^*)\\
				=&\frac{1}{4\rho} \sum_{i=1}^{N} \left( -\|\lambda_i-\lambda_i^* \|^2 + \|\lambda_i^+-\lambda_i^* \|^2 \right)\\
				&+	\frac{\rho N}{4} \|z^+-z^*\|^2   -\frac{\rho N}{4} \|z-z^*\|^2\\
				=& \frac{1}{4}\mathscr{L}(z^+,\lambda^+)-\frac{1}{4}\mathscr{L}(z,\lambda)\leq -\alpha(\|x_i^+-z^*\|)\leq 0.
			\end{split}
		\end{equation}
		This says that the introduced Lyapunov function is monotonically decreasing.
	\end{proof}
	
	\section{Proof of Theorem \ref{Them: uniqueness}}\label{APP: D}
	\begin{proof}
		As we have proved, $\mathscr{L}(z,\lambda)\geq \mathscr{L}(z^+,\lambda^+)$ and  $\mathscr{L}(z,\lambda)$ is bounded below. 
		
		We assume that the sequence pair $(x_i, \lambda_i)$ has at least two limit points $(z^\alpha, \lambda_i^\alpha)$ and $(z^\beta, \lambda_i^\beta)$, such that 
		\begin{equation}\left\{
			\begin{split}
				\mathscr{L}^\alpha(z,\lambda)=	\frac{1}{\rho} \sum_{i=1}^{N}\|\lambda_i-\lambda_i^\alpha \|^2 + \rho N  \|z-z^\alpha\|^2\\
				\mathscr{L}^\beta(z,\lambda)=	\frac{1}{\rho} \sum_{i=1}^{N}\|\lambda_i-\lambda_i^\beta \|^2 + \rho N  \|z-z^\beta\|^2.
			\end{split}\right.
		\end{equation}
		We hypothesize  that they may converge to different value
		\begin{equation}\left\{
			\begin{split}
				\lim_{k\rightarrow \infty} \mathscr{L}^\alpha(z,\lambda) = L^\alpha\\
				\lim_{k\rightarrow \infty} \mathscr{L}^\beta(z,\lambda) = L^\beta.
			\end{split}\right.
		\end{equation}
		Note that
		\begin{equation}
			\begin{split}
				&\mathscr{L}^\alpha(z,\lambda)-\mathscr{L}^\beta(z,\lambda)\\
				=&\rho N \left( \| z-z^\alpha \|^2- \| z-z^\beta \|^2\right)\\
				&+\frac{1}{\rho} \sum_{i=1}^{N} \left( \|\lambda_i-\lambda_i^\alpha\|^2-\|\lambda_i-\lambda_i^\beta\|^2\right).
			\end{split}
		\end{equation} 
		For the first part, 
		\begin{equation}
			\begin{split}
				&\rho N \left( \| z-z^\alpha \|^2- \| z-z^\beta \|^2\right)\\
				=&\rho N \left(\left( z^\alpha - z^\beta \right)^\top\left(z^\alpha + z^\beta -2z \right)\right).\\
			\end{split}
		\end{equation}
		Similarly, the second part can be expressed as
		\begin{equation}
			\begin{split}
				&\frac{1}{\rho} \sum_{i=1}^{N} \left( \|\lambda_i-\lambda_i^\alpha\|^2-\|\lambda_i-\lambda_i^\beta\|^2\right)\\
				=&\frac{1}{\rho} \sum_{i=1}^{N} \left(\left( \lambda_i^\alpha - \lambda_i^\beta \right)^\top\left(\lambda_i^\alpha + \lambda_i^\beta -2\lambda_i \right)\right).
			\end{split}
		\end{equation}
		Therefore,
		\begin{equation}
			\begin{split}
				&\mathscr{L}^\alpha(z,\lambda)-\mathscr{L}^\beta(z,\lambda)\\
				=&\rho N \left(\left( z^\alpha - z^\beta \right)^\top\left(z^\alpha + z^\beta -2z \right)\right)\\
				& +\frac{1}{\rho} \sum_{i=1}^{N} \left(\left( \lambda_i^\alpha - \lambda_i^\beta \right)^\top\left(\lambda_i^\alpha + \lambda_i^\beta -2\lambda_i \right)\right).
			\end{split}
		\end{equation} 
		Suppose $(x_i, \lambda_i)$ converges to  $(z^\alpha, \lambda_i^\alpha)$, then we take limit on both sides
		\begin{equation}\label{eq: limit1}
			\begin{split}
				&\mathscr L^\alpha - \mathscr L^\beta \\
				\rightarrow&\rho N \left(\left( z^\alpha - z^\beta \right)^\top\left(z^\alpha + z^\beta -2z^\alpha \right)\right)\\
				&+ \frac{1}{\rho} \sum_{i=1}^{N} \left(\left( \lambda_i^\alpha - \lambda_i^\beta \right)^\top\left(\lambda_i^\alpha + \lambda_i^\beta -2\lambda_i^\alpha \right)\right)\\
				=& - \rho N \left\|z^\alpha-z^\beta \right\|^2-
				\frac{1}{\rho} \sum_{i=1}^{N} \left\|\lambda_i
				^\alpha-\lambda_i^\beta \right\|^2.
			\end{split}
		\end{equation}
		On the opposite, if $(x_i, \lambda_i)$ converges to  $(z^\beta, \lambda_i^\beta)$, then we take another limit
		\begin{equation}\label{eq: limit2}
			\begin{split}
				&\mathscr L^\alpha -\mathscr L^\beta  \rightarrow  \rho N \left\|z^\alpha-z^\beta \right\|^2+
				\frac{1}{\rho} \sum_{i=1}^{N} \left\|\lambda_i
				^\alpha-\lambda_i^\beta \right\|^2.
			\end{split}
		\end{equation}
		From \eqref{eq: limit1} and \eqref{eq: limit2},
		\[\rho N \left\|z^\alpha-z^\beta \right\|^2+
		\frac{1}{\rho} \sum_{i=1}^{N} \left\|\lambda_i
		^\alpha-\lambda_i^\beta \right\|^2=0.\]
		This will induce $z^\alpha=z^\beta$ and $\lambda_i^\alpha=\lambda_i^\beta$. Therefore, the limit point of $(z, \lambda)$ is unique. The optimal solution $(z^*, \lambda_i^*)$ will be obtained if $\mathscr{L}(z,\lambda) = 0$ is touched. 
		
		%As $\mathscr{L}(\cdot)$ is 
		%strictly monotonously decreasing and bounded from below,
		%	it must converge with Reduced Consensus ALADIN when $z$ converges to $z^*$. %The proof established global convergence of Reduced Consensus ALADIN for general convex cases.  
	\end{proof}
	
	\section{Proof of Theorem \ref{them: local model}}\label{APP: E}
	\begin{proof}
		We assume that $x_i^0$s starts from a small neighbor of $z^*$.
		From the first order convergence condition of optimality of Equation \eqref{eq: consensus ALADIN-step1}, one may obtain
		\begin{equation}\label{eq: first order}
			\left\{
			\begin{array}{l}
				\begin{split}
					\nabla f_i(x_i^{+})+\lambda_i +\rho(x_i^{+}-z)&=0\quad \\
					\nabla f_i(z^{*})+ \lambda_i^* &=0.
				\end{split}
			\end{array}
			\right.
		\end{equation}
		Then do minus with the two equations:
		\[\nabla f_i(z^{*})-\nabla f_i(x_i^{+})+(\lambda_i^*-\lambda_i)-\rho(x_i^{+}-z)=0.\] 
		By rearranging the formula:
		\begin{equation}\label{eq: important equality}
			\nabla f_i(z^{*})-\nabla f_i(x_i^{+})+\rho(z^{*}-x_i^{+})=\rho(z^{*}-z)+(\lambda_i-\lambda_i^*).
		\end{equation}
		
		From the \emph{second order sufficient condition} of each sub-problems, there exist a $\sigma$ such that
		\[\left( \nabla^2f_i(x_i^+)+\rho I\right)\succeq \sigma I \succeq0,\] which implies  \[\|\nabla^2f_i(x_i^+)+\rho I\|\succeq \sigma.\]
		In the end, 
		\begin{equation}\label{eq: important inequality}
			\|\nabla f_i(z^{*})-\nabla f_i(x_i^{+})+\rho(z^{*}-x_i^{+})\|\geq\sigma \|z^*-x_i^{+}\|.
		\end{equation}
		Substituting the left hand side by the right hand side of Equation~\eqref{eq: important equality}, then plug into Equation~\eqref{eq: important inequality}:
		\begin{equation}\label{eq: con_ALADIN_step1}
			\begin{split}
				\|\rho(z^{*}-z)+(\lambda_i-\lambda_i^*)\|&\geq \sigma \|z^*-x_i^{+}\|\\
				\Rightarrow	\frac{\rho}{\sigma}\|z-z^{*}\|+\frac{1}{\sigma}\|\lambda_i-\lambda_i^*\|&\geq  \|x_i^{+}-z^*\|.
			\end{split}
		\end{equation}
		
		From Theorem~\eqref{The: 1} and \eqref{Them: uniqueness} we show that the global convergence of $z$ and $\lambda_i$s. Combining with the result of Equation~\eqref{eq: con_ALADIN_step1} we can easily show that the global convergence of $x_i$s.  
	\end{proof}
	
	\section{Proof of Theorem \ref{Them: gradient converge}}\label{APP: F}
	\begin{proof}
		From Equation~\eqref{eq: gradient}, it's can be seen that the expression of $g_i$ consistes two parts $\rho(z-x_i^+)$ and $-\lambda_i$. When both $z$ and $x_i^+$ converge to $z^*$, $\lambda_i$ converge to $\lambda_i^*$,  then it easily can be seen the limit point of $g_i$ is $-\lambda_i^*$.
		
		Now we can show the convergence of $g_i$ with the following equation. 
		\begin{equation}
			\begin{split}
				&\sum_{i=1}^{N} \left\| g_i-g_i^* \right\| \\
				\overset{\eqref{eq: gradient}}{=}	&\sum_{i=1}^{N}  \left\|\rho(z-x_i^+)-\lambda_i+\lambda_i^* \right\| \\
				\leq& \rho N \left\|z-z^*\right\|+ \sum_{i=1}^{N}\rho \left\|  x_i^+-z^*\right\|  + \left\|\lambda_i-\lambda_i^*\right\|.
			\end{split}
		\end{equation}
		
		From Theorem~\eqref{The: 1}  and \eqref{them: local model}, all of the three parts converge globally, therefore $ \sum_{i=1}^{N} \left\| g_i+\lambda_i^* \right\| $ will also converge to $ 0$ globally.
	\end{proof}
	
	\section{Proof of Theorem \ref{theorem 3 }}\label{APP: G}
	\begin{proof}
		Suppose $\sum_{i=1}^{N} f_i(x_i)$ is $m_f$ strongly convex, then the following inequality holds \cite{ling2015dlm}
		\begin{equation}\label{eq: linear rate}
			\begin{split}
				m_f \sum_{i=1}^{N}&\left\| x_i^+ -z^*\right\|^2 
				\leq \sum_{i=1}^{N} \left(x_i^+-z^* \right)^\top \left( g_i -g_i^* \right)\\
				\overset{\eqref{eq: gradient}}{=}& \sum_{i=1}^{N} \left(x_i^+-z^* \right)^\top \left( \rho \left( z-x_i^+\right) -\lambda_i+\lambda_i^* \right)\\
				\overset{\eqref{eq: primal}}{=}& \sum_{i=1}^{N} \left( \frac{\lambda_i^+-\lambda_i}{2\rho}+\frac{z^++z}{2} -z^* \right)^\top \\
				&\left( \rho \left( z-\frac{\lambda_i^+-\lambda_i}{2\rho}-\frac{z^++z}{2} \right) -\lambda_i+\lambda_i^* \right).\\
			\end{split}
		\end{equation}
		The above equation can be broken into nine parts:
		\begin{itemize}
			\item [Part~1]
			\begin{equation}
				\begin{split}
					\sum_{i=1}^{N} \left( \frac{\lambda_i^+-\lambda_i}{2\rho}\right)^\top\frac{\rho}{2}(z-z^+)\overset{\eqref{eq: sum_lam}}{=}0.
				\end{split}
			\end{equation}

			\item [Part~2]
			\begin{equation}
				\begin{split}
					&\sum_{i=1}^{N} \left( \frac{\lambda_i^+-\lambda_i}{2\rho}\right)^\top \left(\frac{\lambda_i-\lambda_i^+}{2} \right)\\
					=&\; -\frac{1}{4\rho}\sum_{i=1}^{N} \|\lambda_i-\lambda_i^+\|^2.
				\end{split}
			\end{equation}
			
			\item [Part~3]\begin{equation}
				\begin{split}
					&	\sum_{i=1}^{N}  \left( \frac{\lambda_i^+-\lambda_i}{2\rho}\right)^\top  \left( \lambda_i^*-\lambda_i\right)  \\
					\overset{\eqref{eq: quadratic equality}}{=}&\frac{1}{4\rho} 	\sum_{i=1}^{N}\left( \|\lambda_i-\lambda_i^*\|^2  -\|\lambda_i^+-\lambda_i^*\|^2\right)\\
					&+\frac{1}{4\rho} 	\sum_{i=1}^{N}\|\lambda_i-\lambda_i^+\|^2.
				\end{split}
			\end{equation}
			
			\item [Part~4]\begin{equation}
				\begin{split}
					&\sum_{i=1}^{N} \left(\frac{z^++z}{2}\right)^\top \frac{\rho}{2}(z-z^+)\\
					=\;& \frac{\rho N}{4} \left( \|z\|^2-\|z^+\|^2\right).
				\end{split}
			\end{equation} 
			
			\item [Part~5]\begin{equation}\sum_{i=1}^{N} \left(\frac{z^++z}{2}\right)^\top\left(\frac{\lambda_i-\lambda_i^+}{2}\right)\overset{\eqref{eq: sum_lam}}{=}0. \end{equation}
			
			\item [Part~6]  \begin{equation} \sum_{i=1}^{N} \left(\frac{z^++z}{2}\right)^\top\left( \lambda_i^*-\lambda_i\right)  \overset{\eqref{eq: sum_lam}}{=}0. 
			\end{equation}
			
			\item [Part~7]\begin{equation}
				\begin{split}
					\sum_{i=1}^{N}\left(-z^*\right)^\top\frac{\rho}{2}(z-z^+)
					=\frac{\rho N}{2} \left( \left(z^+ \right)^\top z^* - z^\top z^*\right).
				\end{split}
			\end{equation}
			
			\item [Part~8]\begin{equation} \sum_{i=1}^{N} \left(-z^*\right)^\top \left(\frac{\lambda_i-\lambda_i^+}{2} \right)\overset{\eqref{eq: sum_lam}}{=}0.
			\end{equation}
			
			\item [Part~9] \begin{equation} \sum_{i=1}^{N} \left(-z^*\right)^\top \left( \lambda_i^*-\lambda_i\right)\overset{\eqref{eq: sum_lam}}{=}0.
			\end{equation}
		\end{itemize}
		In the end,  combine the above nine parts,
		\begin{equation}\label{eq: mf}
			\begin{split}
				m_f \sum_{i=1}^{N}&\left\| x_i^+ -z^*\right\|^2 
				\leq \sum_{i=1}^{N} \left(x_i^+-z^* \right)^\top \left( g_i -g_i^* \right)\\
				=&  \frac{1}{4\rho}\sum_{i=1}^{N} \left( \|\lambda_i-\lambda_i^*\|^2  -\|\lambda_i^+-\lambda_i^*\|^2\right) \\
				&+ \frac{\rho N}{4} \left( \|z\|^2  -2z^\top z^* +\|z^*\|^2 \right)\\
				&-\frac{\rho N}{4} \left( \|z^+\|^2  -2\left( z^+\right)^\top z^* +\|z^*\|^2 \right)\\
				=&\frac{1}{4}\mathscr{L}(z,\lambda)-\frac{1}{4}\mathscr{L}(z^+,\lambda^+).
			\end{split}
		\end{equation}
		
		Following the assumption in Theorem \ref{theorem 3 }, if 
		\[  \delta \mathscr{L}(z^+,\lambda^+)\leq 4 m_f \sum_{i=1}^{N}\left\| x_i^+ -z^*\right\|^2 \] holds, with the result of \eqref{eq: mf},
		we have	\[\mathscr{L}(z^+,\lambda^+)\leq \frac{1}{1+\delta}\mathscr{L}(z,\lambda).\]
		Later, a serial recurrence formula can be established:
		\begin{equation}
			\begin{split}
				\rho N  \|z-z^*\|^2 &\leq\frac{1}{\rho} \sum_{i=1}^{N}\|\lambda_i-\lambda_i^* \|^2 + \rho N  \|z-z^*\|^2	\\
				&= \mathscr{L}(z,\lambda)\leq \left( \frac{1}{1+\delta} \right)^k 	\mathscr{L}(z^0,\lambda^0).
			\end{split}
		\end{equation} 
		Therefore we have Q-linearly converge of the global variable $z$.
		\begin{equation}
			\|z-z^*\| \leq \frac{1}{\sqrt{\rho N}} \left( \frac{1}{\sqrt{1+\delta}} \right)^k 	\mathscr{L}(z^0,\lambda^0)^{\frac{1}{2}}.
		\end{equation} 
		Moreover, on the dual side we have
		\begin{equation}
			\begin{split}
				\frac{1}{\rho} \sum_{i=1}^{N}\|\lambda_i-\lambda_i^* \|^2 &\leq\frac{1}{\rho} \sum_{i=1}^{N}\|\lambda_i-\lambda_i^* \|^2 + \rho N  \|z-z^*\|^2\\
				&	= \mathscr{L}(z,\lambda)\leq \left( \frac{1}{1+\delta} \right)^k 	\mathscr{L}(z^0,\lambda^0),
			\end{split}
		\end{equation}
		which induce the convergence of the dual
		\begin{equation}
			\sum_{i=1}^{N}\|\lambda_i-\lambda_i^* \|^2 \leq \rho  \left( \frac{1}{1+\delta} \right)^k 	\mathscr{L}(z^0,\lambda^0).
		\end{equation}
		This shows a Q-linear convergence rate of Reduced Consensus ALADIN.
		
	\end{proof}
	
	\section{Proof of Theorem \ref{them: local convergence}}\label{APP: H}
	\begin{proof}
		First, let the result of Equation~\eqref{eq: first order}-\eqref{eq: con_ALADIN_step1} holds, four different situations of local convergence analysis of C-ALADIN can be established by combing with SQP theory \cite[Chapter 18]{Nocedal2006}. First, we show the local convergence analysis of Reduced Consensus ALADIN. As an extension, the corresponding analysis of Consensus BFGS ALADIN is also discussed. %Appendix~\ref{appendix}.

		%We have four different situations to be analyzed.
		As mentioned in Section \ref{sec: conALADIN}, Reduced Consensus ALADIN is proposed by setting $B_i=\rho I$. 
		%	If $\nabla^2f_i(x_i)\rightarrow \rho I$, 
		Assume $\gamma$ is the upper bound of the Hessian approximation distance,  $\left\|\rho I-\nabla^2f_i(x_i)\right\|\leq \gamma$,
		the convergence rate analysis can be divided into four situations.
		From the SQP theory of Newton Lagrange method \cite{nonlinear-programming}, we have different convergence rate guarantees for the first three situations of the following four. 
		
		a) If $\left\|\rho I-\nabla^2f_i(x_i)\right\|\leq \gamma \leq 1$, 
		we have
		\begin{equation}\label{eq: step2-linear}
			\left\{
			\begin{split}
				N\|z^{+}-z^*\|&\leq \gamma\sum_{i=1}^{N}\|x_i^{+}-z^*\|\\ \sum_{i=1}^{N}\|\lambda_i^{+}-\lambda_i^*\|&\leq \gamma\sum_{i=1}^{N}\|x_i^{+}-z^*\|.
			\end{split}\right.
		\end{equation} 
		Conbine Equation \eqref{eq: con_ALADIN_step1} and \eqref{eq: step2-linear}, the following inequality can be obtained.
		\begin{equation}
			\begin{split}
				&\left( 	\frac{\rho N}{\sigma}\|(z^+-z^*)\|+\frac{1}{\sigma}\sum_{i=1}^{N}\|(\lambda_i^{+}-\lambda_i^*)\|\right) \\
				\leq &\frac{\left( \rho +1\right)\gamma}{\sigma}\sum_{i=1}^{N}\|x_i^{+}-z^*\|\\
				\overset{\eqref{eq: con_ALADIN_step1}}{\leq} &\frac{\left( \rho +1\right)\gamma}{\sigma}\left( 	\frac{\rho N}{\sigma}\|z-z^{*}\|+\frac{1}{\sigma}\sum_{i=1}^{N}\|\lambda_i-\lambda_i^*\|\right).  
			\end{split}
		\end{equation}
		This shows a local linear convergence of Reduced Consensus ALADIN.

		b) If $\gamma\rightarrow 0$, we have
		\begin{equation}
			\begin{split}
				&\left( 	\frac{\rho N}{\sigma}\|(z^+-z^*)\|+\frac{1}{\sigma}\sum_{i=1}^{N}\|(\lambda_i^{+}-\lambda_i^*)\|\right)\\
				\leq& \frac{\left( \rho +1\right)\omega_f}{2\sigma}\left( \sum_{i=1}^{N}\|x_i^{+}-z^*\|\right)^2\\
				\overset{\eqref{eq: con_ALADIN_step1}}{\leq}
				&\frac{\left( \rho +1\right)\omega_f}{2\sigma}\left( 	\frac{\rho N}{\sigma}\|z-z^{*}\|+\frac{1}{\sigma}\sum_{i=1}^{N}\|\lambda_i-\lambda_i^*\|\right)^2, 
			\end{split}
		\end{equation} where $\omega_f$ denotes the corresponding Lipschitz constant \cite{Nocedal2006}.
		This shows a quadratic convergence of C-ALADIN. 
		
		c) If $\kappa=\frac{\left( \rho +1\right)}{\sigma}   \left(\gamma+\frac{\omega_f}{2}\sum_{i=1}^{N}\|x_i^{+}-z^*\| \right)\rightarrow0$,  we have
		\begin{equation}
			\begin{split}
				&\left( 	\frac{\rho N}{\sigma}\|(z^+-z^*)\|+\frac{1}{\sigma}\sum_{i=1}^{N}\|(\lambda_i^{+}-\lambda_i^*)\|\right) \\
				\leq &\frac{\left( \rho +1\right)}{\sigma}   \left(\gamma+\frac{\omega_f}{2}\sum_{i=1}^{N}\|x_i^{+}-z^*\| \right)\left(\sum_{i=1}^{N}\|x_i^{+}-z^*\| \right) \\
				= &\;\kappa\left(\sum_{i=1}^{N}\|x_i^{+}-z^*\| \right) \\
				\overset{\eqref{eq: con_ALADIN_step1}}{\leq} &\;\kappa\left( 	\frac{\rho N}{\sigma}\|z-z^{*}\|+\frac{1}{\sigma}\sum_{i=1}^{N}\|\lambda_i-\lambda_i^*\|\right).  
			\end{split}
		\end{equation}
		Reduced Consensus ALADIN will show a local superlinear convergence.

		d)
		If $\gamma$ is large, the above analysis can not be applied. 
		Luckily, if $\rho$ is sufficient large, Equation \eqref{eq: consensus ALADIN-step1} is locally strongly convex.
		%\begin{equation}\label{eq: auxfun}
		%\begin{split} 
		%		\min_{x, z}\;\;& \sum_{i=1}^{N}f_i(x_i)+\frac{\rho}{2}\|x_i-z\|^2 \\ \quad\mathrm{s.t.}\;\;& x_i = z\  |\lambda_i^+. \;
		%	\end{split}
	%	\end{equation}
Corollary \eqref{corollary} can be prepared here with different $\delta^k$ in each iteration $k$.  Therefore, assume there exist a $\bar \delta \geq \delta^k$ and satisfies Equation \eqref{eq: strongly convex}, then Reduced Consensus ALADIN can still get Q-linearly converge with rate $\left( \frac{1}{\sqrt{1+\bar\delta}} \right)$.

By recovering the Hessian approximation $B_i$ with Equation \eqref{eq: BFGS}, Consensus BFGS ALADIN 
can touch linear or super-linear convergence rate, since $\|B_i- \nabla^2 f_i(x_i)\|\rightarrow 0$ can easily hold.
With the technologies of \emph{Sharpen BFGS} \cite{jin2022sharpened}, super-linear or even quadratic rate can be obtained.
\end{proof}

\begin{remark}
Although the SQP type algorithms has local convergence guarantees for non-convex optimization problems, the performance depends on the initial point and the way of combined globalization technologies. Although ALADIN inherits the local convergence properties of SQP, the consensus globalization technologies (i.e. Armijo line search or trust region method \cite{Nocedal2006}) are not needed.
\end{remark}
% you can choose not to have a title for an appendix
% if you want by leaving the argument blank
%	\section{}
%	Appendix two text goes here.
%	
%	
%	% use section* for acknowledgment
%	\ifCLASSOPTIONcompsoc
%	% The Computer Society usually uses the plural form
%	\section*{Acknowledgments}
%	\else
%	% regular IEEE prefers the singular form
%	\section*{Acknowledgment}
%	\fi
%	
%	
%	The authors would like to thank...

% Can use something like this to put references on a page
% by themselves when using endfloat and the captionsoff option.
\ifCLASSOPTIONcaptionsoff
\newpage
\fi

% trigger a \newpage just before the given reference
% number - used to balance the columns on the last page
% adjust value as needed - may need to be readjusted if
% the document is modified later
%\IEEEtriggeratref{8}
% The "triggered" command can be changed if desired:
%\IEEEtriggercmd{\enlargethispage{-5in}}

% references section

% can use a bibliography generated by BibTeX as a .bbl file
% BibTeX documentation can be easily obtained at:
% http://mirror.ctan.org/biblio/bibtex/contrib/doc/
% The IEEEtran BibTeX style support page is at:
% http://www.michaelshell.org/tex/ieeetran/bibtex/
%\bibliographystyle{IEEEtran}
% argument is your BibTeX string definitions and bibliography database(s)
%\bibliography{IEEEabrv,../bib/paper}
%
% <OR> manually copy in the resultant .bbl file
% set second argument of \begin to the number of references
% (used to reserve space for the reference number labels box)
%	\begin{thebibliography}{1}
%		
%		\bibitem{IEEEhowto:kopka}
%		H.~Kopka and P.~W. Daly, \emph{A Guide to {\LaTeX}}, 3rd~ed.\hskip 1em plus
%		0.5em minus 0.4em\relax Harlow, England: Addison-Wesley, 1999.
%		
%	\end{thebibliography}

\bibliographystyle{unsrt}

% biography section
% 
% If you have an EPS/PDF photo (graphicx package needed) extra braces are
% needed around the contents of the optional argument to biography to prevent
% the LaTeX parser from getting confused when it sees the complicated
% \includegraphics command within an optional argument. (You could create
% your own custom macro containing the \includegraphics command to make things
% simpler here.)
%\begin{IEEEbiography}[{\includegraphics[width=1in,height=1.25in,clip,keepaspectratio]{mshell}}]{Michael Shell}
% or if you just want to reserve a space for a photo:

%\begin{IEEEbiography}{Michael Shell}
%	Biography text here.
%\end{IEEEbiography}
%
%% if you will not have a photo at all:
%\begin{IEEEbiographynophoto}{John Doe}
%	Biography text here.
%\end{IEEEbiographynophoto}

% insert where needed to balance the two columns on the last page with
% biographies
%\newpage

%\begin{IEEEbiographynophoto}{Jane Doe}
%	Biography text here.
%\end{IEEEbiographynophoto}

% You can push biographies down or up by placing
% a \vfill before or after them. The appropriate
% use of \vfill depends on what kind of text is
% on the last page and whether or not the columns
% are being equalized.

%\vfill

% Can be used to pull up biographies so that the bottom of the last one
% is flush with the other column.
%\enlargethispage{-5in}

% that's all folks
\end{document}